\newtheorem{theorem}{Theorem}[section]
\newtheorem{proposition}[theorem]{Proposition}
\newtheorem{lemma}[theorem]{Lemma}
\newtheorem{corollary}[theorem]{Corollary}
\newtheorem{question}[theorem]{Question}
\newtheorem*{lemma*}{Lemma}
\newtheorem{claim}[theorem]{Claim}
\newtheorem{obs}[theorem]{Observation}
\newtheorem*{claim*}{Claim}
\newtheorem{problem}[theorem]{Problem}
\newtheorem{construction}[theorem]{Construction}
\theoremstyle{remark}
\newtheorem*{remark}{Remark}
\newcommand{\bb}[1]{\mathbb{#1}}
\newcommand{\ca}[1]{\mathcal{#1}}
\newcommand{\bc}{,\allowbreak}
\newcommand{\ff}[2]{\left\lfloor\frac{#1}{#2}\right\rfloor}
\newcommand{\mr}[1]{\mathrm{#1}}
\newcommand{\cf}[2]{\left\lceil\frac{#1}{#2}\right\rceil}
\DeclareMathOperator{\sat}{sat}
\DeclareMathOperator{\prsat}{sat^\ast}
\DeclareMathOperator{\ssat}{ssat}
\DeclareMathOperator{\Sat}{Sat}
\DeclareMathOperator{\Prsat}{Sat^\ast}
\DeclareMathOperator{\ex}{ex}
\title{Proper rainbow saturation for trees}
\author{Andrew Lane\footnotemark[1]\thanks{Department of Mathematics and Statistics, University of Victoria, Canada. \\ Email: \texttt{\{andrewlane,nmorrison\}@uvic.ca}. } \thanks{Research supported by the Jamie Cassels Undergraduate Research Awards and Science Undergraduate Research Awards, University of Victoria.} 
    \and Natasha Morrison\footnotemark[1] \thanks{Research supported by NSERC Discovery Grant RGPIN-2021-02511 and NSERC Early Career Supplement DGECR-2021-00047.}
	}
\date{\today}
\begin{document}

\maketitle

\begin{abstract}
Given a graph $H$, we say that a graph $G$ is \emph{properly rainbow $H$-saturated} if: (1) There is a proper edge colouring of $G$ containing no rainbow copy of $H$; (2) For every $e \notin E(G)$, every proper edge colouring of $G+e$ contains a rainbow copy of $H$. The \emph{proper rainbow saturation number} $\prsat(n,H)$ is the minimum number of edges in a properly rainbow $H$-saturated graph. In this paper we initiate a systematic study of the proper rainbow saturation number for trees. We obtain exact and asymptotic results on $\prsat(n,T)$ for several infinite families of trees. Our proofs reveal connections to the classical saturation and semi-saturation numbers.
\end{abstract}

\section{Introduction}
Given a graph $H$, say a graph $G$ contains a copy of $H$ if $G$ has a subgraph isomorphic to $H$; say $G$ is \emph{$H$-free} otherwise. We say that $G$ is \emph{$H$-saturated} if $G$ is $H$-free, but adding any edge to $G$ creates a copy of $H$.
Given a number $n \in \bb{N}$ and a graph $H$, the \emph{saturation number} $\sat(n,H)$ is the minimum number of edges in an $H$-saturated graph on $n$ vertices.
Since a maximal $H$-free graph is also $H$-saturated, the Tur\'an extremal number $\ex(n,H)$, which counts the maximum number of edges in an $H$-free graph on $n$ vertices, also counts the maximum number of edges in an $H$-saturated graph on $n$ vertices.
Thus, the saturation number is a natural dual to the extremal number.

Study of the saturation number was initiated by Erd\"os, Hajnal, and Moon~\cite{ErdosHajnalMoon1964} in 1964.
Since then it has been widely studied, along with many variants; see the survey by Currie, J. Faudree, R. Faudree, and Schmitt~\cite{faudree2011survey}.

This paper concerns an edge-colouring variant of the saturation number.
An \emph{edge-colouring} of a graph $G$ is a function $\phi:E(G)\to C$, where $C$ is an arbitrary set of so-called \emph{colours}.
An edge-colouring $\phi$ of a graph $G$ is \emph{proper} if, for all $e,f \in E(G)$, $e\cap f\neq \emptyset$ implies $\phi(e)\neq \phi(f)$.
An edge-colouring is \emph{rainbow} if every edge receives a different colour, i.e., it is injective.
For graphs $G$ and $H$, and an edge-colouring $\phi$ of $G$, a \emph{rainbow copy of $H$} in $G$ under $\phi$ is a subgraph $H'$ of $G$ isomorphic to $H$ such that the restriction of $\phi$ to $E(H')$ is rainbow.

Given a graph $H$, we say that a graph $G$ is \emph{properly rainbow $H$-saturated} if the following hold:
\begin{enumerate}
    \item There is a proper edge colouring of $G$ containing no rainbow copy of $H$;
    \item For every $e \notin E(G)$, every proper edge colouring of $G+e$ contains a rainbow copy of $H$.
\end{enumerate}
The \emph{proper rainbow saturation number} $\prsat(n,H)$ is the minimum number of edges in a properly rainbow $H$-saturated graph. This was introduced in 2022 by Bushaw, Johnston, and Rombach~\cite{Bushaw2022} as a natural analogue of the  rainbow extremal number, introduced by Keevash, Mubayi, Sudakov, and Verstra\"ete~\cite{keevash2007rainbow} in 2007.
While $\prsat(n,H)$ is referred to in~\cite{Bushaw2022} as the rainbow saturation number, since a function termed the rainbow saturation number has already been studied in the literature (see, e.g., \cite{barrus2017colored}, \cite{behague2024rainbow}, \cite{girao2020rainbow}), we call $\prsat(n,H)$ the proper rainbow saturation number instead.
A straightforward application of a result of K\'azsonyi and Tuza~\cite{Kaszonyi1986} shows that $\prsat(n, H) = O(n)$ for all $H$~\cite{pcbjr,pcbj}. We show in~\cite{other} $\prsat(n,H) = \Omega(n)$ whenever $H$ has no isolated edge. So in fact $\prsat(n,H) = \Theta(n)$ for all connected $H$.

In this paper we focus our attention on the proper rainbow saturation number of trees. In particular, we provide asymptotically tight bounds for several infinite families of trees. Prior to this work, asymptotically tight bounds on $\prsat(n,H)$ were only known for $H = C_4$ (Halfpap, Lidický, and Masařík~\cite{halfpap2024proper}),  and $H = P_4$ (Bushaw, Johnston, and Rombach~\cite{halfpap2024proper}). 

Typically, upper bounds on saturation numbers (both rainbow and classical) are proved through finding an appropriate construction. Lower bounds are notoriously more difficult. Let the \emph{diameter} of a graph $G$ be $\max\{d_G(u,v): u,v \in V(G)\}$, where $d_G(u,v)$ denotes the \emph{distance} between $u$ and $v$, that is, the number of edges in a shortest path with endpoints $u$ and $v$. Our first theorem provides a general lower bound on the proper rainbow saturation number for connected graphs with diameter at least $5$.

\begin{restatable}{theorem}{genlower} \label{thm:gen_path_lower_bound}
Let $H$ be a connected graph with diameter at least $5$.
Then for all $n \in \bb{N}$, $\prsat(n,H)\ge n-1$.
\end{restatable}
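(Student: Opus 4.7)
Since any graph on $n$ vertices with $c$ components has at least $n-c$ edges, the bound $\prsat(n,H)\ge n-1$ reduces to showing that every properly rainbow $H$-saturated graph $G$ on $n$ vertices is connected; I will prove this by contradiction.

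Suppose $G$ is disconnected. Let $\phi$ be a proper edge-colouring of $G$ with no rainbow $H$ (provided by saturation), pick $u\in C_1$ and $v\in C_2$ in distinct components of $G$, and set $e=uv$. The plan is to construct a proper edge-colouring $\phi'$ of $G+e$ with no rainbow $H$, contradicting saturation. I would take $\phi'=\phi$ on $E(G)$ and set $\phi'(e)=c$ for a carefully chosen colour $c \notin \phi(\delta_G(u)\cup \delta_G(v))$, so that $\phi'$ is proper. Any rainbow $H$ under $\phi'$ must use $e$ (else it is rainbow in $G$ under $\phi$, a contradiction), and any such copy has $e$ as a bridge (since removing $e$ splits it across $C_1$ and $C_2$), so $e$ plays the role of some bridge $e_0\in E(H)$.

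If $H$ has no bridges, no copy of $H$ uses $e$ and any proper $c$ works, yielding the desired contradiction. So assume $H$ has a bridge, and use that $H$ contains $P_6$. Fix a copy $P = x_1 x_2 \cdots x_6$ of $P_6$ inside $H$; any rainbow copy of $H$ under $\phi'$ restricts to a rainbow copy of $P$ in $G+e$, which uses $5$ distinct colours, at most two of which are incident to $u$ and two to $v$. Let $\mathcal{R}$ be the set of rainbow copies (under $\phi$) of $H - e_0$ in $G$ rooted at $(u,v)$, ranging over all bridges $e_0$ of $H$. The key reduction is that $\phi'$ is rainbow-$H$-free precisely when the chosen $c$ lies in every $\phi(R)$ for $R \in \mathcal{R}$; thus it suffices to find $c \in \bigcap_{R\in\mathcal{R}} \phi(R) \setminus \phi(\delta_G(u)\cup \delta_G(v))$.

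The main obstacle is showing such a $c$ exists. My approach would be to case-analyse on the bridge $e_0$ and on the position of the fixed $P_6$ relative to $e_0$ (namely, whether $P$ uses $e_0$ — with subcases by its index among the five edges of $P$ — or whether $P$ lies wholly on one side of $e_0$). In each case, I would exploit the length-$5$ rigidity of $P_6$: every $R\in\mathcal{R}$ contains $|E(H)|-1\ge 4$ rainbow colours, of which at most $\deg_H(u_0)-1+\deg_H(v_0)-1$ can sit at $u$ or $v$, leaving an interior colour which I would argue is common to all $R\in\mathcal{R}$. If the naive intersection argument fails in some configuration (for instance when $u$ or $v$ has very high degree in $G$), I would either reselect $u,v$ to have minimum degree in their respective components or apply a Kempe-chain swap of $\phi$ near $u$ or $v$ to free up an appropriate colour; this last technical maneuver is where I expect the bulk of the work to lie.
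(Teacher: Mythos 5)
Your plan hinges on the claim that every properly rainbow $H$-saturated graph is connected, and this claim is false. For $H = P_6$, take $G$ to be the disjoint union of two copies of the folded cube $F_5$. With the standard colouring $\phi(x,y)=y-x$ on each copy, $G$ is rainbow $P_6$-free. Adding a non-edge inside one copy forces a rainbow $P_6$ by Lemma~\ref{lem:path_con_holds}(4). For a non-edge $uv$ between the two copies and any proper colouring $\phi'$ of $G+uv$: if either restriction $\phi'|_{F_5}$ contains a rainbow $P_6$ we are done; otherwise each restriction is a $5$-colouring (Lemma~\ref{lem:path_con_holds}(1)), and since $u$ is $5$-regular in its copy it is already incident to all five of those colours, so $\phi'(uv)$ is a new colour. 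By Lemma~\ref{lem:path_con_holds}(2) there is a rainbow $5$-vertex path $p$ in the first copy ending at $u$, and $p,v$ is then a rainbow $P_6$. So $G$ is properly rainbow $P_6$-saturated yet disconnected. Consequently the reduction ``$\prsat(n,H)\ge n-1 \iff$ saturated graphs are connected'' does not hold; the paper instead shows only that a saturated graph has at most one tree component, of order at most $2$ (cyclic components already contribute at least as many edges as vertices), which is a weaker and correct structural statement.

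The technical core of your plan is also problematic even setting aside the above. You keep $\phi$ fixed and choose only the colour of the new cross-component edge $uv$, arguing you can pick $c\in\bigcap_{R\in\mathcal R}\phi(R)$. But after relabelling, $\phi|_{C_1}$ and $\phi|_{C_2}$ can use disjoint colour sets, and then for a bridge $e_0$ that splits $H$ into two nontrivial pieces (e.g.\ the middle edge of a $P_6$, giving $P_3\cup P_3$), every proper colouring makes both pieces rainbow, so $\mathcal R$ is plentiful and different $R$'s can have pairwise disjoint colour sets; no single $c$ lies in all of them. This is exactly why the paper's proof recolours a large portion of the tree component before adding an edge, rather than merely choosing the colour of the new edge — the recolouring is essential, not a technical refinement that can be deferred to case analysis or a Kempe swap.
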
 
This is best possible, as evidenced by Theorem~\ref{thm:caterpillar} below. 

In fact, we can also achieve the lower bound of $n-1$ for a class of trees with diameter 4. 
For $k,m \ge 1$, let $B_{k,m}$ be the tree obtained by appending $m$ pendant vertices to an endpoint of the path $P_k$.
We refer to $B_{k,m}$ as a \emph{broom of length $k$}.

\begin{restatable}{theorem}{broom4} \label{thm:broom4}
For all $m \in \bb{N}$ and $n \ge 3(m+2)$, 
\[n-1 \le \prsat(n,B_{4,m}) \le 3(m+2)\left\lfloor \frac{n}{3(m+2)} \right\rfloor + \binom{n-3(m+2)\left\lfloor \frac{n}{3(m+2)} \right\rfloor}{2}.\]
\end{restatable}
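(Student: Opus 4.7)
\emph{Lower bound.} Since $B_{4,m}$ has diameter $4$ (not $5$), Theorem~\ref{thm:gen_path_lower_bound} does not apply directly, so we argue separately that any properly rainbow $B_{4,m}$-saturated graph $G$ satisfies $|E(G)| \ge n-1$. Assume for contradiction $|E(G)| \le n-2$, so $G$ has at least two components. First, $G$ cannot contain two isolated vertices, since joining them creates a $K_2$ component too small to host $B_{4,m}$. For any pair $u, v$ in distinct components, adding $uv$ must produce a rainbow $B_{4,m}$ under every proper colouring of $G+uv$, and any such copy must use $uv$. A case analysis on the role $uv$ plays in $B_{4,m}$ (one of the three path edges, or a pendant edge at the centre) constrains the neighbourhoods of $u$ and $v$: for instance, if $uv = v_1 v_2$ with $v_1 = u$, then the component of $v$ must contain a $B_{3,m}$ rooted at $v$. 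Under these structural constraints, the plan is to exhibit a specific proper colouring of $G+uv$ (extending a rainbow-free colouring of $G$ by careful colour reuse on $uv$) that contains no rainbow $B_{4,m}$, contradicting saturation.

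\emph{Upper bound.} Let $k = \lfloor n/(3(m+2)) \rfloor$ and $r = n - 3(m+2)k$. The plan is to construct $G$ as the vertex-disjoint union of $k$ isomorphic copies of a graph $F$ on $3(m+2)$ vertices with $3(m+2)$ edges, together with a clique $K_r$ on the remaining $r < 3(m+2)$ vertices. The block $F$ is chosen to host a few high-degree ``hubs'' each of degree at least $m+1$, arranged so that $F$ admits a proper colouring in which every copy of $B_{4,m}$ inside $F$ exhibits a forced colour collision, typically between the far spoke $v_1v_2$ and one of the pendants at the centre $v_4$. The same colouring is used on every copy of $F$, and $K_r$ is coloured properly with a suitably small palette so that any $B_{4,m}$ it contains is also non-rainbow; this establishes condition (1).

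For condition (2), we show that for every non-edge $e$ of $G$ and every proper colouring of $G+e$, there is a rainbow $B_{4,m}$. The cases are: $e$ lies within one $F_i$; $e$ joins two distinct copies $F_i$ and $F_j$; or $e$ connects a copy $F_i$ to $K_r$. In each case, $e$ is incident to at least one vertex that is a hub or adjacent to one, producing many candidate copies of $B_{4,m}$ anchored at that hub. The plan is to use (a) the $m+1$ distinct colours on the spokes at each hub, (b) the propriety constraint forcing the colour of $e$ to differ from all colours of edges incident to its endpoints, and (c) the freedom to choose which $m$ of the $m+1$ spokes at a hub serve as the pendants $L_1,\ldots,L_m$, together with a pigeonhole argument, to locate a rainbow $B_{4,m}$ under any adversarial colouring. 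The main obstacle throughout is condition (2): the adversary has enormous freedom in choosing proper colourings, and the argument must leverage the rigid local colour structure at hubs together with the combinatorial flexibility in pendant selection to defeat every adversarial colouring. The lower bound is also delicate, since it requires explicitly constructing rainbow-free colourings in the disconnected case rather than relying on a naive edge count.
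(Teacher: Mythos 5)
Your high-level framing is right on both sides (disconnectedness for the lower bound, a small repeated block plus a leftover piece for the upper bound), but the plan as written is missing the ideas that make both halves actually go through, and in a couple of places it points in the wrong direction.

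For the lower bound, the paper does \emph{not} argue by adding an edge between two components of a disconnected saturated graph, which is the route you propose. Instead it reduces to showing that no tree on at least $3$ vertices is properly rainbow $B_{4,m}$-saturated: if $|E(G)|\le n-2$ then $G$ has at least two tree components, and if every tree component were $K_1$ or $K_2$ one would instantly contradict saturation (the merged component would be too small to host $B_{4,m}$), so attention can be focused on a single tree and a non-edge \emph{inside} it. Your plan instead analyses adding $uv$ with $u,v$ in different components; that constrains the uncoloured graph only mildly and leaves the actual work (constructing a proper colouring of $G+uv$ with no rainbow $B_{4,m}$) unaddressed, since you have no handle on how proper colourings of the two components interact. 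The engine that drives the paper's tree argument, and which your plan has no analogue of, is Lemma~\ref{lem:broom4_containing_tree}: in any rainbow $B_{4,m}$-free proper colouring, any path $v_0,v_1,v_2,v_3$ with $d(v_0)\ge m+3$ must have $\phi(v_0v_1)=\phi(v_2v_3)$. This rigidity is what lets one recolour a branch of the tree and add an edge without creating a rainbow $B_{4,m}$; without it, the ``careful colour reuse'' you gesture at has nothing to hook onto.

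For the upper bound, there are two concrete problems. First, you take $K_r$ on the leftover $r<3(m+2)$ vertices; for $r\ge m+4$ it is not automatic that $K_r$ admits a rainbow $B_{4,m}$-free proper colouring (condition (1) can fail for complete graphs), so using it is not obviously legal. The paper uses a graph $F\in\Prsat(r,B_{4,m})$, which exists by a maximality argument, and then bounds its edge count trivially by $\binom{r}{2}$; that sidesteps the issue entirely. Second, and more importantly, the hard part of condition (2) is that any proper colouring of $G+e$ must be defeated, and the paper handles it by \emph{first proving that every rainbow $B_{4,m}$-free proper colouring of the block $H$ is equivalent to one canonical colouring} (Claim~\ref{clm:broom4_upper_characterization}); only after that rigidity is established does the adversary become tractable. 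Your plan acknowledges that rigidity is needed (``rigid local colour structure at hubs'') but does not specify the block $H$ (the paper's is a triangle with $m{+}1$ pendants at each corner, $3(m{+}2)$ vertices and $3(m{+}2)$ edges) and gives no mechanism for establishing uniqueness of the rainbow-free colouring, which is exactly the step one cannot do ``by pigeonhole.'' As written, both halves are sketches of intent rather than arguments, and the specific lemmas that close them are absent.
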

Note that, for fixed $m$, $\prsat(n,B_{4,m}) = n+O(1)$ and, for infinitely many $n$, we have $\prsat(n,B_{4,m}) = n-1$.

Say a tree $T$ is a \emph{caterpillar} if the tree obtained by deleting all leaves in $T$ is a path (call this path the \emph{central path}).
Note that all brooms (and thus all stars and paths) are caterpillars.
\begin{restatable}{theorem}{caterpillar} \label{thm:caterpillar}
Let $\ell\ge 4$ and $k \ge \ell+2$, and let $T_{k,\ell}$ be a $k$-vertex caterpillar with central path $v_1,\ldots,v_\ell$.
Suppose $v_\ell$ has degree $2$ in $T_{k,\ell}$.
Then for all $n\ge (k+1)2^{\ell-2}$,
\[\prsat(n,T_{k,\ell}) \le n+(\ell-3)2^{\ell-3}.\]
\end{restatable}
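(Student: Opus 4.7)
The plan is to exhibit a properly rainbow $T_{k,\ell}$-saturated graph $G$ on $n$ vertices with at most $n+(\ell-3)2^{\ell-3}$ edges. Since $T_{k,\ell}$ contains $P_\ell$ and hence $P_6$, Theorem~\ref{thm:gen_path_lower_bound} forces $|E(G)|\ge n-1$, so the target exceeds the lower bound by only $(\ell-3)2^{\ell-3}+1$ edges and $G$ should be a near-tree. I would build $G$ as a bounded-size gadget $H$ on roughly $(k+1)2^{\ell-2}$ vertices together with $n-|V(H)|$ pendant leaves attached to a distinguished vertex of $H$. The lower bound $n\ge(k+1)2^{\ell-2}$ is precisely what is needed for the pendant pool to be large enough. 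Inside $H$, I would arrange the $(\ell-3)2^{\ell-3}$ excess edges as $\ell-3$ layers of $2^{\ell-3}$ sibling edge pairs, so that along any traversal of $H$ there are two distinct-colour options at each of $\ell-3$ branching steps.

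To obtain a proper edge colouring of $G$ with no rainbow $T_{k,\ell}$, I would first verify that $G$ is $T_{k,\ell}$-free as a graph; then any proper edge colouring of $G$ (which exists by Vizing's theorem) is vacuously rainbow-$T_{k,\ell}$-free. The $T_{k,\ell}$-freeness should follow from a structural argument about the gadget: the branching layers of $H$ are arranged so that no subgraph of $G$ admits a central path of length $\ell-1$.

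The main obstacle is the saturation property: for every non-edge $uv$ and every proper edge colouring $\phi$ of $G+uv$, I must produce a rainbow copy of $T_{k,\ell}$ under $\phi$. My plan is to use $uv$ as one edge of the central path of the desired $T_{k,\ell}$ and to extend through the branching layers, at each step choosing one of the two sibling edge colours available to avoid the at most $\ell$ colours already used; properness at the branching vertex guarantees two distinct-colour options, so at least one is admissible. Iterating across the $\ell-3$ layers yields a rainbow central path of length $\ell-1$, and the required pendant leaves of $T_{k,\ell}$ are then hung from the pendant pool by another pigeonhole argument: each leaf-bearing central-path vertex has $\ge k-\ell$ pendant neighbours whose incident edges have pairwise distinct colours by properness. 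The most intricate part is the case analysis on where $uv$ lies (inside the gadget, between the gadget and a pendant, or between two pendants on different branches), and verifying that the rainbow extension can be carried out uniformly in each case. The hypothesis that $v_\ell$ has degree $2$ in $T_{k,\ell}$ is essential: it asymmetrises the target tree so that only one end of the central path needs a leaf, allowing the construction to dedicate its $(\ell-3)2^{\ell-3}$ budget to the other, ``long'' end and thus keep the edge count within the claimed bound.
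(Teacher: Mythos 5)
Your proposal has a fundamental flaw in how it handles the first defining condition of proper rainbow saturation. You claim that the gadget $H$ should be arranged so that $G$ is $T_{k,\ell}$-free as a graph, so that \emph{any} proper colouring is vacuously rainbow-$T_{k,\ell}$-free. This cannot work: a $T_{k,\ell}$-free graph $G$ would have to satisfy that \emph{every} proper colouring of $G+e$ contains a rainbow $T_{k,\ell}$ even though $G+e$ contains only the few copies of $T_{k,\ell}$ through the new edge $e$; an adversary choosing a proper colouring of $G+e$ has far too much freedom for that to hold. The paper's construction is genuinely different: it takes the $(\ell-1)$-regular folded cube $F_{\ell-1}$ on $2^{\ell-2}$ vertices with at least $k$ pendants at \emph{each} vertex, and this graph $G$ contains many copies of $T_{k,\ell}$. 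The crux, which your proposal skips entirely, is the existence of a specific nontrivial proper colouring of $F_{\ell-1}$ with no rainbow $P_\ell$ (the Johnston--Rombach colouring, Theorem~\ref{thm:path_hypercube}), which forces any extension to $G$ to have no rainbow $P_{\ell+2}$ and hence no rainbow $T_{k,\ell}$.

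A second gap is that your gadget --- ``$\ell-3$ layers of $2^{\ell-3}$ sibling edge pairs'' --- is never concretely defined, so neither the claimed $T_{k,\ell}$-freeness (which, as above, is anyway the wrong target) nor the claimed two-choice branching property can be checked. The heavy technical work in the paper's proof is Lemma~\ref{lem:path_con_holds}, which shows that in any rainbow $P_\ell$-free proper colouring of $F_{\ell-1}$ one can anchor a rainbow $(\ell-1)$-vertex path at any vertex while avoiding a prescribed colour or vertex, and deduces that $F_{\ell-1}$ itself is properly rainbow $P_\ell$-saturated; your ``pigeonhole at each branching step'' is a very loose gesture at this, but the argument requires understanding the structure of \emph{all} rainbow $P_\ell$-free colourings of the gadget (they turn out to use exactly $\ell-1$ colours), not just the one you build. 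Finally, attaching all surplus vertices as pendants at a single distinguished vertex would not put the needed $k$ pendants at the two endpoints of the rainbow path that Claim~\ref{clm:CP_rainbow_path} produces; the paper needs pendants at every vertex of $F_{\ell-1}$.
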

Along with Theorem~\ref{thm:gen_path_lower_bound} this shows that $\prsat(n,T_{k,\ell}) = n + O(1)$ for any $\ell \ge 4$ and $k \ge \ell + 2$.
In particular, we have the following corollary.
\begin{corollary} \label{cor:path}
For all $k \ge 5$ and $n \ge (k+1)2^{k-4}$,
\[ n-1 \le \prsat(n,P_k) \le \begin{cases}
    9\ff{n}{9}+\binom{n-9\left\lfloor \frac{n}{9} \right\rfloor}{2}, & k=5, \\
    n+(k-5)2^{k-5}, & k\ge6.
\end{cases} \]
In particular, $\prsat(n,P_k)=n+O(1)$ for all $k \ge 5$, and $\prsat(n,P_5)=n-1$ for infinitely many $n$.
\end{corollary}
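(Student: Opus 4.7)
The plan is to derive the corollary directly from the three theorems stated in the excerpt by recognizing $P_k$ as a member of the families already analyzed. Specifically, I would observe two identifications: first, that $P_5$ coincides with the broom $B_{4,1}$ (the path $P_4$ with a single pendant appended to an endpoint), and second, that for $k \ge 6$ the path $P_k$ is a caterpillar whose central path is $P_{k-2}$, with both endpoints of the central path having degree $2$ in $P_k$.

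For the lower bound, I would split into cases. When $k \ge 6$, the graph $P_k$ is connected and contains $P_6$, so Theorem~\ref{thm:gen_path_lower_bound} immediately gives $\prsat(n,P_k) \ge n-1$. When $k = 5$, applying Theorem~\ref{thm:broom4} with $m=1$ to $B_{4,1} = P_5$ provides the same lower bound $n-1$ for all $n \ge 9$, which is covered by the hypothesis $n \ge (k+1)2^{k-4} = 12$.

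For the upper bound, again split into cases. When $k = 5$, Theorem~\ref{thm:broom4} with $m=1$ yields the claimed construction, since $3(m+2) = 9$ makes the expression read $9\lfloor n/9\rfloor + \binom{n-9\lfloor n/9\rfloor}{2}$. When $k \ge 6$, the caterpillar $P_k$ satisfies the hypotheses of Theorem~\ref{thm:caterpillar} with $\ell = k-2$: we have $\ell \ge 4$, $k = \ell+2$ so $k \ge \ell+2$, and $v_\ell$ (an endpoint of the central path) has degree $2$ in $P_k$. The theorem then gives $\prsat(n,P_k) \le n + (\ell-3)2^{\ell-3} = n + (k-5)2^{k-5}$ under the hypothesis $n \ge (k+1)2^{\ell-2} = (k+1)2^{k-4}$, matching the stated bound.

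Finally, for the claim $\prsat(n,P_5) = n-1$ for infinitely many $n$, I would take $n \equiv 1 \pmod{9}$: then $n - 9\lfloor n/9\rfloor = 1$, so the upper bound reduces to $(n-1) + \binom{1}{2} = n-1$, matching the lower bound. Since the corollary is a direct translation of the three preceding theorems, there is no substantive obstacle; the only points to verify are the two structural identifications above and the arithmetic that $9\lfloor n/9 \rfloor = n-1$ precisely when $n \equiv 1 \pmod 9$.
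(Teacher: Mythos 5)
Your proposal is correct and follows exactly the intended derivation: identify $P_5$ with $B_{4,1}$ and apply Theorem~\ref{thm:broom4} for $k=5$ (noting $3(m+2)=9$), identify $P_k$ with the caterpillar $T_{k,k-2}$ and apply Theorem~\ref{thm:caterpillar} for $k\ge 6$, and use Theorem~\ref{thm:gen_path_lower_bound} (or Theorem~\ref{thm:broom4} for $k=5$) for the lower bound; the $n\equiv 1\pmod 9$ observation for the final clause is also what is needed. The paper states this as an immediate corollary without writing out the argument, and your reasoning is the one it implicitly relies on.
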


In light of Theorem~\ref{thm:caterpillar}, one may wonder whether the condition on the degree of $v_{\ell}$ in $T_{k,\ell}$ is necessary, or simply an artefact of the proof. The following theorem resolves this question (in a stronger sense) and tells us that it would not be possible to prove an upper bound of the form in Theorem~\ref{thm:caterpillar} with $d(v_{\ell}) > 2$. 

\begin{restatable}{theorem}{catcon}
\label{thm:cat_converse}
Let $T$ be a tree with diameter $\ge 4$. 
Suppose that every vertex of degree $2$ in $T$ has no leaf neighbours.
Let $w_0,\ldots,w_t$ be a longest path in $T$ such that the internal vertices $w_1,\ldots,w_{t-1}$ each have degree $2$ in $T$.
Let $r:= \max\{t,2\}$.
Then 
\[\prsat(n,T)\ge \left(1+\frac{1}{12r+52}\right)n+O(1). \]
\end{restatable}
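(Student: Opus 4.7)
Let $G$ be a properly rainbow $T$-saturated graph on $n$ vertices, and fix a proper edge-colouring $\phi$ of $G$ with no rainbow copy of $T$. The strategy is to forbid long bare substructures in $G$ and then use a handshaking argument. Call a path $p_0 p_1 \ldots p_m$ in $G$ \emph{bare} if $p_1, \ldots, p_{m-1}$ all have degree $2$ in $G$. The plan is to show that there exists $L_0 = O(r)$ such that $G$ contains no bare path of length exceeding $L_0$, and then to lower-bound $|E(G)|$ accordingly.

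\textbf{Key structural claim.} The main step is to prove: $G$ has no bare path of length greater than some $L_0 = O(r)$. The proof is by contradiction: suppose $\Pi = p_0 p_1 \ldots p_m$ is a bare path with $m > L_0$. I would construct a non-edge $e$ of $G$ and a proper extension $\phi'$ of $\phi$ to $G + e$ containing no rainbow copy of $T$, contradicting saturation property (2). The crucial observation is: in any copy of $T$ in $G+e$, the edges lying on the bare path $\Pi$ form one or more sub-paths whose internal vertices all have degree $2$ in $T$. Since $T$'s degree-$2$ vertices have no leaf neighbours (by hypothesis), such a sub-path cannot terminate at an internal vertex $p_i$ of $\Pi$ (with $1 \leq i \leq m-1$)---terminating there would make $p_i$ embed as a leaf of $T$ while the adjacent $p_{i \pm 1}$ embeds as a degree-$2$ vertex of $T$ with leaf neighbour, contradicting the hypothesis. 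Hence each such sub-path has an endpoint at $p_0$ or $p_m$, and its length is at most $t \leq r$. Consequently, any embedded copy of $T$ uses at most $2r$ edges of $\Pi$, confined to the neighbourhoods of $p_0$ and $p_m$. Choosing $e = p_i p_j$ for $i, j$ in the interior of $\Pi$ (far from both $p_0$ and $p_m$), and recolouring a long segment of $\Pi$'s interior with a short-period pattern (e.g.\ alternating two colours, so any three consecutive edges contain a repeat), then blocks every rainbow $T$-copy through $e$ that uses this segment; a careful choice of colour for $e$ itself handles the remaining cases.

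\textbf{Deriving the bound.} Assuming the Key Claim, I would finish by handshaking. Let $B$, $L_1$, $L_2$ denote the numbers of vertices of $G$ of degree $\geq 3$, degree $1$, and degree $2$ respectively, so $B + L_1 + L_2 = n + O(1)$. Every degree-$2$ vertex lies in the interior of some maximal bare path, each of length at most $L_0$; writing $N$ for the number of maximal bare paths, a standard computation gives $N = |E(G)| - L_2$, and since each bare path has at most $L_0 - 1$ internal degree-$2$ vertices, $L_2 \leq (L_0 - 1)(|E(G)| - L_2)$, so $L_2 \leq (1 - 1/L_0)|E(G)|$. Combining with the degree-sum inequality $2|E(G)| \geq L_1 + 2L_2 + 3B = 3n - 2L_1 - L_2 + O(1)$, together with a parallel argument bounding $L_1$ (using that two non-adjacent leaves force a rainbow copy of $T-e$ in $G$ whose structure is constrained by the same no-leaf-at-degree-$2$ hypothesis), we solve for $|E(G)|$ and obtain $|E(G)| \geq (1 + 1/(12r+52))n + O(1)$ after optimising the constant $L_0 = O(r)$. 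The main obstacle is the Key Claim: constructing $\phi'$ so that \emph{every} rainbow $T$-copy through the new edge $e$ is simultaneously destroyed demands a careful case analysis of how $T$ can embed into $G+e$, and the periodic recolouring together with the colour chosen for $e$ must jointly block every one of these embeddings.
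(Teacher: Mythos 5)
Your key structural claim (no bare path of length more than $O(r)$) is in the same spirit as the first step of the paper's proof (Claim~\ref{clm:cat_converse_not_far}, which shows that all but three vertices lie within distance $r$ of a vertex of degree at least $3$), and together with the paper's Claims~\ref{clm:one_leaf} and~\ref{clm:no_leaf} (each vertex has at most one leaf neighbour, and at most one degree-$2$ vertex has a leaf neighbour, so $L_1\le B+O(1)$) your list of constraints is essentially what the paper has at that stage. The problem is that these constraints do \emph{not} imply $|E(G)|\ge(1+\varepsilon)n$: the handshaking step in your ``Deriving the bound'' paragraph has a gap.

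Here is a concrete obstruction. Take a long cycle with $B$ marked vertices spaced $L_0$ apart, and attach one pendant leaf to each marked vertex. Then every vertex of degree $\ge 3$ has degree exactly $3$ and one leaf neighbour, every maximal bare path has length $\le L_0$, and $L_1=B$; writing $n=B(L_0+1)$ one computes $|E(G)|=BL_0+B=n$ exactly. This graph satisfies $L_2\le(1-1/L_0)|E(G)|$ with equality and satisfies $L_1\le B$, yet has average degree exactly $2$. So no amount of ``optimising $L_0$'' in the inequalities $2|E(G)|\ge L_1+2L_2+3B$, $L_2\le(1-1/L_0)|E(G)|$, $L_1\le B+O(1)$ can push $|E(G)|$ above $n-O(1)$. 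What is missing is exactly the paper's Claim~\ref{clm:avg_degree_contradiction}: one must rule out the local picture ``degree-$3$ vertex plus a pendant leaf balancing it,'' i.e.\ show that in a ball of radius $O(r)$ around each degree-$\ge3$ vertex $x$, the total degree excess $\sum_v(d_G(v)-2)$ is strictly positive. The paper does this by partitioning $V(G)$ into pieces $S_x$ of size $O(r)$ around degree-$\ge 3$ vertices, analysing the spanning tree of $G[S_x]$, and showing that if the excess were zero then $G[S_x]$ (minus leaves) would be a path or cycle, in which case one can recolour a bounded neighbourhood with a $2$- or $3$-periodic pattern and insert an edge without creating a rainbow $T$. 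That additional structural claim (not a sharper leaf count, which cannot be improved past $L_1\le B+O(1)$) is what lifts the bound from $n-O(1)$ to $(1+\tfrac{1}{12r+52})n+O(1)$, and it is absent from your outline.
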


We also provide an exact value for the proper rainbow saturation number for a different infinite family of trees, including $P_4$ (see Theorem~\ref{thm:Tkast rainbow sat}, below). We do this via revealing a connection to the classical saturation number. We generalize these results further (see Theorem~\ref{thm:double_star_sat_improved}) to so-called \emph{double stars}.

The paper is organised as follows. Theorem~\ref{thm:gen_path_lower_bound} is proved in Section~\ref{sec:gen}, Theorem~\ref{thm:broom4} in Section~\ref{sec:broom}, Theorem~\ref{thm:caterpillar} in Section~\ref{sec:cat}, and Theorem~\ref{thm:cat_converse} in Section~\ref{sec:catcon}. We prove results on double stars and subdivided stars in Section~\ref{sec:stars} before concluding with some open questions in Section~\ref{sec:concl}.

\subsection{Notation and terminology}
``Colourings" will always refer to edge-colourings.
For a graph $G$ and a colouring $\phi$ of $G$, we will refer to the pair $(G,\phi)$, called a \emph{coloured graph}.
A rainbow copy of a graph $H$ in $(G,\phi)$ is a rainbow copy of $H$ in $G$ under the colouring $\phi$.
This notation will be useful when many related graphs and colourings are involved.

For $n,m \in \bb{N}$ with $n \le m$, we write $[n,m]:=\{n,n+1,\ldots,m\}$ and $[n]:=[1,n]$.
For a graph $G$, a vertex $v \in V(G)$, and a set $A\subseteq V(G)$, denote $vA:= \{vu:u \in A\}$.
For a function $\phi:E\to C$ and a subset $E'\subseteq E$, let $\phi(E'):= \{\phi(e):e \in E'\}$, and let $\phi|_{E'}:E'\to C$ be the restriction of $\phi$ to $E'$.

For a graph $G$, let $\overline{G}$ denote the complement of $G$.
For $v \in V(G)$, let $G-v$ denote the graph with vertex set $V(G)\setminus\{v\}$ and edge set $E(G)\setminus\{vu:u \in V(G)\}$.
For $e \in E(G)$, let $G-e$ denote the graph with vertex set $V(G)$ and edge set $E(G)\setminus\{e\}$.
For graphs $G$ and $H$ with $V(G)\cap V(H)=\emptyset$, let $G+H$ denote the graph join of $G$ and $H$, which has vertex set $V(G)\cup V(H)$ and edge set $E(G)\cup E(H) \cup \{uv:u \in V(G),v \in V(H)\}$.
Let $G\cup H$ denote the disjoint union of $G$ and $H$, which has vertex set $V(G)\cup V(H)$ and edge set $E(G) \cup E(H)$.

\section{General lower bound for graphs containing $P_6$}~\label{sec:gen}
In this section, we prove Theorem~\ref{thm:gen_path_lower_bound}, restated here for convenience.
\genlower*

We denote by $\Prsat(n,H)$ the set of $n$-vertex properly rainbow $H$-saturated graphs with $\prsat(n,H)$ edges. For a tree $T$, given $u,v \in V(T)$, let $P_{uv}$ denote the unique $u$-$v$ path in $T$. For a graph $G$, given $u,v \in V(G)$.

\begin{proof}[Proof of Theorem~\ref{thm:gen_path_lower_bound}]
    We will show that no tree on $n\ge 3$ vertices is properly rainbow $H$-saturated.
    This suffices, as a disconnected graph in $\Prsat(n,H)$ has at most one tree component, which is necessarily a $K_1$ or $K_2$ (any larger tree is not properly rainbow $H$-saturated, and adding an edge between two $K_1$ or $K_2$ components could not yield a copy of $H$).
    
    Suppose, in order to obtain a contradiction, that for some $n\ge 3$, $T$ is an $n$-vertex tree that is properly rainbow $H$-saturated.
    Let $\phi$ be a proper colouring of $T$ containing no rainbow $H$.
    We will recolour $T$ and add an edge $e$ to $T$ in a specific colour so that the resulting colouring of $T+e$ is still proper and rainbow $H$-free, contradicting $T$ being properly rainbow $H$-saturated.

    Choose a root $r$ for $T$.
    Let $v_1$ be a vertex with $d(v_1)\ge 3$ of maximal distance from $r$, so that every vertex $x$ with $d(x,r)>d(v_1,r)$ has $d(x)\le 2$.
    For each $v \in V(T)$, let $T_v$ denote the subtree of $T$ rooted at $v$, let $f(v)$ be the parent of $v$ if $v\neq r$, and let $f(v)=v$ if $v=r$.
    Let $v_2,\ldots,v_k$ be the siblings of $v_i$, and let $y=f(v_1)$.
    For each $i \in [m]$, let $T_i:= T_{v_i}$.
    
    \begin{claim} \label{clm:gen_path_lower}
        Each $T_i$ is either a path with endpoint $v_i$, or $v_i$ has exactly one leaf neighbour and is the unique vertex of $T_i$ that has degree 3 in $T$.
    \end{claim}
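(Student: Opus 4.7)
I would argue by contradiction: supposing that some $T_i$ violates the claim, the plan is to construct an edge $e \notin E(T)$ together with a proper colouring $\phi'$ of $T+e$ containing no rainbow copy of $H$, contradicting the properly rainbow $H$-saturated hypothesis on $T$.

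The first step is a structural reduction. Every vertex of $T_i$ other than $v_i$ lies at distance strictly greater than $d_T(v_1,r)$ from $r$, so by the maximal choice of $v_1$ it has degree at most $2$ in $T$. Hence $T_i$ is a spider centred at $v_i$ whose legs are paths emanating from $v_i$. Writing $k$ for the number of legs of this spider, the claim fails exactly in one of three configurations: (I) $k \geq 3$; (II) $k=2$ with both legs of length $\geq 2$; or (III) $k=2$ with both legs of length $1$, so that $T_i$ is the path $u_1 v_i u_2$ with $v_i$ at the centre.

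In each case my plan is to pick two leaves $\ell, \ell'$ of $T_i$ lying in distinct legs, set $e := \ell\ell'$, and then define $\phi'$ to agree with $\phi$ on $T$ (possibly after a small local recolouring near $v_i$) and to set $\phi'(e) := \phi(f)$ for a carefully chosen edge $f \in E(T)$. The unique cycle $C$ in $T+e$ passes through $v_i$; since $\ell$ and $\ell'$ are leaves of $T$, every $P_6$ in $T+e$ that uses $e$ must traverse $e$ together with a long stretch of $C$, extending at most a short distance beyond $C$ through $v_i$ into $y$ or the neighbouring subtrees. The target is to choose $\ell, \ell'$ and $f$ so that every such $P_6$ also contains $f$, ensuring $\phi'(e) = \phi'(f)$ and hence that no $P_6$ through $e$ is rainbow; since $H \supseteq P_6$ this rules out any rainbow $H$ through $e$, and since $\phi'$ agrees with $\phi$ on $T$ up to the local recolouring, no rainbow $H$ avoiding $e$ survives either. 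For instance, in case (III) any $P_6$ using $e = u_1u_2$ is forced to route $\cdots - u_1 - u_2 - v_i - y - \cdots$, so it contains the edge $v_iy$; then $\phi'(e) := \phi(v_iy)$ works immediately, and is proper because the three $\phi$-colours at $v_i$ are distinct.

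The main obstacle is case (II): when $C$ is long, $P_6$'s through $e$ have several possible routes along $C$, some of which exit $C$ at $v_i$ into $y$ and then into a sibling subtree $T_j$. The delicate part is selecting $\ell,\ell',f$ so that every such $P_6$ contains $f$, while still keeping $\phi'$ proper after assigning $\phi'(e) := \phi(f)$ (and not re-enabling a rainbow $H$ elsewhere in $T$). I expect this to reduce to a short finite case analysis on where a prospective rainbow $P_6$ can meet $C$, exploiting that any rainbow $P_6$ consumes $5$ distinct colours but the local palette around $v_i$ and $C$ is tightly constrained by the proper colouring $\phi$.
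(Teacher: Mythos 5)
Your overall strategy -- adding an edge $e$ between leaves in two distinct legs of the spider $T_i$ and colouring it so that $T+e$ remains rainbow $H$-free -- matches the paper's, and your split into configurations (I), (II), (III) is the right one. In case (III) you even land on the same edge and colour as the paper ($\phi'(u_1u_2)=\phi(v_iy)$). But there are two genuine gaps.

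First, your central device -- choosing a fixed edge $f\neq e$ so that every $P_6$ through $e$ must also use $f$ -- simply does not exist in case (II) (or in the long-leg subcase of (I)). When both legs are long, a $P_6$ through $e=u_1u_2$ can be, say, $a,a',u_1,u_2,b',b$, taking two further edges from each leg near the leaves; as these can be chosen on either side independently, no single edge of $T$ (let alone one near $v_i$) lies on all of them. A ``small local recolouring near $v_i$'' cannot help either, since such $P_6$'s may be arbitrarily far from $v_i$. The paper's fix is structurally different: it recolours each whole leg $T_{x_j}$ with an alternating $2$-colouring in $c_0:=\phi(v_iy)$ and $c_j:=\phi(v_ix_j)$, then colours $u_1u_2$ greedily from $\{c_0,c_1,c_2\}$. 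After this, any $P_6$ inside $T_i+u_1u_2$ receives at most four colours (namely $c_0,c_1,c_2$ and one further $c_{j'}$) on its five edges, so it is automatically non-rainbow with no distinguished $f$ chosen at all.

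Second, the inference ``no $P_6$ through $e$ is rainbow, and $H\supseteq P_6$, so no rainbow $H$ uses $e$'' is not valid even once the $P_6$ statement is secured: a rainbow copy $H'$ of $H$ may contain $e$ in the position of a short pendant edge of $H$, in which case no sub-$P_6$ of $H'$ uses $e$ and the $P_6$ analysis says nothing about $H'$. The paper closes this with a separate step: it shows that if $H'$ uses any edge inside some $T_{x_j}$, while its longest path $P$ stays in $T'$, then grafting one half of $P$ onto the tree-path from that edge through $x_j,v_i,y$ to the nearest vertex of $P$ yields a $6$-vertex path inside $H'$ crossing $v_i$ and $y$, which the first part has just shown cannot be rainbow. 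You would need an argument of this flavour even to finish your case (III).
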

    \begin{proof}[Proof of Claim]
        Let $x_1,\ldots,x_t$ be the children of $v_i$, and for each $j \in [t]$, let $c_j:= \phi(y_ix_j)$.
        Since every descendant $x$ of $v_i$ has $d(x,r) < d(v_i,r) = d(v_1,r)$, $T_{x_j}$ is a path rooted at $x_j$ for all $i \in [t]$; let $\ell_j$ be the number of vertices in $T_{x_j}$, and let $u_j$ be the leaf in $T_{x_j}$.
        If $v_i \neq y$, let $c_0:= \phi(v_iy)$, and let $c_0$ be a colour not in $\{c_1,\ldots,c_t\}$ otherwise.
        Let $T'$ be the subtree of $T$ induced by $V(T)\setminus \bigcup_{j=1}^t V(T_{x_j})$.

        By the hypothesis of the claim, we can assume that $t\ge 2$ and either $\ell_1=\ell_2=1$ or $\ell_1\ge \ell_2 \ge 2$.
        Let $\phi'$ be obtained from $\phi$ by properly recolouring each $T_{x_j}$ in alternating colours $c_0,c_j$.
        Extend $\phi'$ to a colouring of $T+u_1u_2$ by properly colouring $vu$ greedily with the first available colour in $c_0,c_1,c_2$.

        Every rainbow $P_6$ in $(T+u_1u_2,\phi')$ is contained entirely in $(T',\phi')=(T',\phi)$.
        To see this, first note that if such a path $P$ is contained entirely in $T_{i}$, it passes through $v_i$ at most once and thus intersects at most two components of $(T_{i}+u_1u_2)-v_i$, say the components containing $x_j,x_{j'}$ for some $2\le j,j' \le t$ (note that $x_1$ and $x_2$ are in the same component of $(T_{i}+v_1v_2)-v_i$).
        But then $P$ contains only colours in $\{c_0,c_1,c_j,c_{j'}\}$, so it is not rainbow.
        So we can assume that $P$ contains the path $z,x_j,v_i,y$ for some $z$ in $T_{i}$ and $j \in [t]$.
        If $zx_j = u_1u_2$, then since at least one of $u_1,u_2$ is adjacent to $v_i$, we are in the case that $\ell_1=\ell_2=1$, so $\phi(zx_j) = c_0 = \phi(v_iy)$.
        Otherwise, since $T_{x_j}$ receives an alternating colouring in colours $c_0,c_j$, we again have $\phi'(zx_j) = c_0 = \phi'(v_iy)$.
        Thus, $P$ is not rainbow.

        Suppose $(T+u_1u_2,\phi')$ has a rainbow copy $H'$ of $H$.
        Then $H'$ has a longest path $P=w_1,\ldots,w_m$, where $m \ge 6$.
        If $H'$ is contained in $T'$, then $H'$ is coloured by the original rainbow $H$-free colouring $\phi$ and is thus not rainbow.
        If $P$ contains an edge not in $T'$, then $P$ is not rainbow by the paragraph above, so $H'$ is not rainbow.
        Otherwise, neither of the endpoints of $P$ are in $T_{i}$ and $H'$ contains an edge $x_jv$ for some $j \in [t]$ and $v \in V(T_i)$.
        Let $\ell \in [m]$ such that $w_\ell$ is a vertex in $P$ of minimal distance to $y_i$.
        Then the path $P_{x_jw_\ell}$ contains the path $x_j,v_i,y$ and thus has length at least $2$, and either $p_1:=w_\ell,w_{\ell-1},\ldots,w_1$ or $p_2:=w_\ell,w_{\ell+1},\ldots,w_k$ has length at least $\lceil m/2\rceil \ge 3$, so $v,P_{x_jw_\ell},p_1$ or $v,P_{x_jw_\ell},p_2$ is a path with at least $6$ vertices that is contained in $H'$ but not in $T'$, so this path is not rainbow, and so $H'$ is not rainbow, a contradiction.
        This proves the claim.
    \end{proof}
    Since $d(v_1) \ge 3$ and $v_1$ has at most $2$ children by Claim \ref{clm:gen_path_lower}, we must have $v_1 \neq r$, i.e., $y \neq v_1$.
    For each $i \in [k]$, let $c_i:= \phi(yv_i)$, and let $c_0:=\phi(yf(y))$ if $y\neq r$ or $c_0 \notin \{c_1,\ldots,c_m\}$ otherwise.
    For each $i \in [k]$, let $u_i$ be the leaf of the maximal-length path rooted at $y_i$, and let $x_i$ be the neighbour of $v_i$ in $P_{v_iu_i}$ (if it exists).
    Assume without loss of generality that $d(v_2,u_2) \ge d(v_3,u_3)\ge \cdots \ge d(v_m,u_m)$.
    Let $T'$ be the subtree of $T$ induced by $V(T)\setminus\bigcup_{i=1}^m V(T_i)$.
    We will define the colouring $\phi'$ of $T$ based on $\phi$ as follows.
    
    For each $i\in [k]$, swap the colours $c_0$ and $\phi(v_ix_i)$ wherever they appear in $T_i$.
    Since $\phi(yv_i) = c_i\neq c_0$, this preserves the proper colouring.
    
    Also, this recolouring does not create any rainbow copies of $P_6$, since any $P_6$ affected by this recolouring contains two edges of the colour $c_0$.
    Now, for each $i \in [k]$, recolour $P_{v_iu_i}$ with alternating colours $c_0,c_j$; again, this does not create any rainbow copies of $P_6$.
    Let $\phi'$ be the resulting colouring of $T$ from these recolourings.
    Note that $\phi'|_{E(T')}=\phi|_{E(T')}$ and $(T_y,\phi'|_{T_y})$ is rainbow $P_6$-free.

    \begin{figure}
        \centering
        \includegraphics{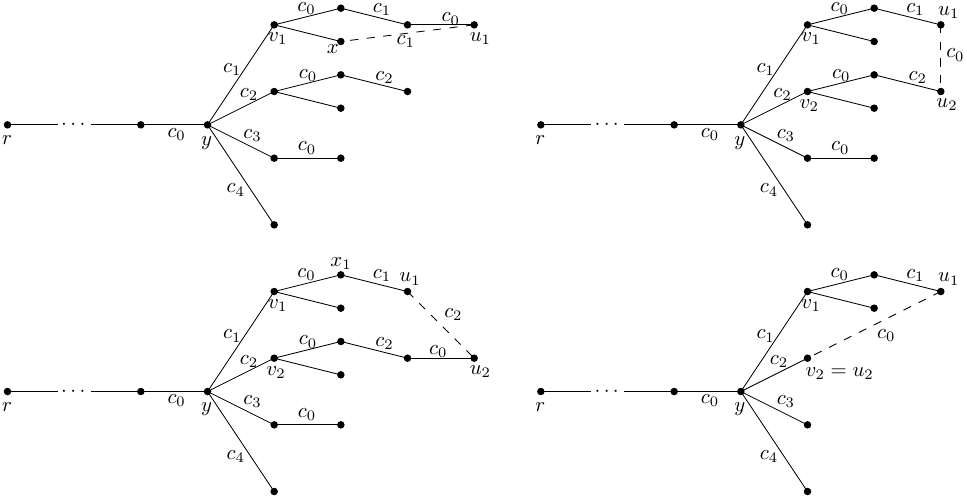}
        \caption{The coloured graph $(T+e,\phi')$ from the proof of Theorem~\ref{thm:gen_path_lower_bound}}
        \label{fig:gen_lower_end}
    \end{figure}
    
    Since $d(v_1) \ge 3$, by Claim~\ref{clm:gen_path_lower}, $v_1$ has a leaf neighbour $x$ and $d(v_1,u_1) \ge 2$.
    If $\phi'(u_1f(u_1)) = c_0$, then extend $\phi'$ to a proper colouring of $T+u_1x$ by setting $\phi(u_1x) = c_1$.
    Note that $\phi(v_1x) \neq \phi(v_1y) = c_1$, so this colouring is indeed proper.
    For any copy $P$ of $P_6$ containing $u_1x$, $P$ is either contained entirely in $T_1$, which is $3$-coloured with colours $c_0,c_1,\phi(v_1x)$, or $P$ contains the two edges $u_1x,v_1y$ of colour $c_1$ (see Figure~\ref{fig:gen_lower_end}, top left).
    So every rainbow copy of $P_6$ in $(T+u_1x,\phi')$ is in $(T',\phi)$ in this case.

    Now suppose that $\phi'(u_1f(u_1)) = c_1$.
    In this case, extend $\phi'$ to a proper colouring of $T+u_1u_2$ by colouring $u_1u_2$ with the first available colour in $c_0,c_2$. Any copy of $P_6$ containing $u_1u_2$ has a repeated colour in $\{c_0,c_1,c_2\}$, so every rainbow copy of $P_6$ in $(T+u_1u_2,\phi')$ is in $(T',\phi)$ in this case (see Figure~\ref{fig:gen_lower_end}).

    Thus, in all cases, there exists $e \in E(\overline{T})$ and an extension of $\phi'$ to $T+e$ such that every rainbow copy of $P_6$ in $(T+e,\phi')$ is in $(T',\phi)$.
    As in the last paragraph of the proof of Claim~\ref{clm:gen_path_lower}, it follows that $(T+e,\phi')$ is rainbow $H$-free.
    Thus, $T$ is not properly rainbow $H$-saturated.
    This completes the proof of Theorem~\ref{thm:gen_path_lower_bound}.
\end{proof}

\section{Length-$4$ brooms}\label{sec:broom}
In this section, we prove Theorem~\ref{thm:broom4}.
We prove the upper and lower bounds on $\prsat(n,B_{4,m})$ from Theorem~\ref{thm:broom4} separately in Theorems~\ref{thm:broom4_lower_bound} and~\ref{thm:broom4_upper} below, respectively.

In order to prove the lower bound on $\prsat(n,B_{4,m})$ in Theorem~\ref{thm:broom4}, we have the following lemma.
\begin{lemma} \label{lem:broom4_containing_tree}
Let $m \in \bb{N}$, and let $G$ be a graph with a a rainbow $B_{4,m}$-free proper colouring $\phi$.
Any path $v_0,v_1,v_2,v_3$ in $G$ with $d(v_0) \ge m+3$ has $\phi(v_0v_1) = \phi(v_2v_3)$.
In particular, for any $v_0,v_2 \in V$ such that $d(v_0,v_2) = 2$, if $d(v_0)\ge m+3$, then $d(v_2) \le 2$.
\end{lemma}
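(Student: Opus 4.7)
The plan is a contradiction argument. Suppose toward contradiction that there is a path $v_0v_1v_2v_3$ in $G$ with $d(v_0)\ge m+3$ but $\phi(v_0v_1)\neq\phi(v_2v_3)$. I will construct a rainbow copy of $B_{4,m}$ in $(G,\phi)$, contradicting the rainbow $B_{4,m}$-freeness of $\phi$. The natural embedding realises $v_0$ as the unique degree-$(m{+}1)$ vertex of $B_{4,m}$, the path $v_0v_1v_2v_3$ as the length-$3$ spine, and the $m$ pendant leaves of the broom as an appropriate set of neighbours of $v_0$.

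Under the contradiction hypothesis, together with properness at $v_1$ and $v_2$, the three spine colours $\phi(v_0v_1)$, $\phi(v_1v_2)$, $\phi(v_2v_3)$ are pairwise distinct. Because $\phi$ is proper at $v_0$, the edges incident to $v_0$ already carry pairwise distinct colours, none equal to $\phi(v_0v_1)$. Hence the only remaining rainbow obstruction is that one of the chosen leaves $w_i \in N(v_0)\setminus\{v_1,v_2,v_3\}$ could satisfy $\phi(v_0w_i)\in\{\phi(v_1v_2),\phi(v_2v_3)\}$. Properness at $v_0$ again forbids more than one incident edge from carrying each of these two colours, so at most two neighbours of $v_0$ are disqualified on colour grounds.

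The counting step then runs as follows: in the core configuration where $v_2,v_3\notin N(v_0)$, the candidate set $N(v_0)\setminus\{v_1,v_2,v_3\}$ has size $d(v_0)-1\ge m+2$, and removing the at most two disqualified neighbours still leaves at least $m$ valid leaves, which together with the spine produce the desired rainbow $B_{4,m}$. I expect the main technical wrinkle to be the bookkeeping when chords $v_0v_2$ or $v_0v_3$ happen to be present: properness at $v_2$ and $v_3$ forces most chord/colour coincidences to be impossible (for example, $\phi(v_0v_2)\notin\{\phi(v_1v_2),\phi(v_2v_3)\}$), and in the remaining configuration $\phi(v_0v_3)=\phi(v_1v_2)$ the bad colour lands on the non-candidate edge $v_0v_3$ rather than shrinking the pool of valid leaves, which restores the count. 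A case split along these lines should make the counting tight.

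For the "in particular" statement, the plan is to apply the main lemma to two carefully chosen paths through $v_2$. Fix an intermediate vertex $v_1 \in N(v_0)\cap N(v_2)$ witnessing $d(v_0,v_2)=2$; note $v_0\notin N(v_2)$. Supposing toward contradiction that $d(v_2)\ge 3$, choose two distinct $a,b \in N(v_2)\setminus\{v_1\}$. Applying the main statement to each of the paths $v_0v_1v_2a$ and $v_0v_1v_2b$ (both of which satisfy $d(v_0)\ge m+3$) forces $\phi(v_0v_1)=\phi(v_2a)$ and $\phi(v_0v_1)=\phi(v_2b)$, hence $\phi(v_2a)=\phi(v_2b)$, contradicting properness at $v_2$. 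Thus $d(v_2)\le 2$.
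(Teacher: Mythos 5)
Your core argument is the same as the paper's: assume $\phi(v_0v_1)\neq\phi(v_2v_3)$, note the three spine colours are then pairwise distinct by properness, and count that $d(v_0)-1\ge m+2$ neighbours minus at most two colour-conflicted ones leaves $m$ pendants forming a rainbow $B_{4,m}$. Your ``in particular'' deduction (two applications of the main statement, contradicting properness at $v_2$) is also equivalent to the paper's one-line argument that $v_2$ can have at most one neighbour $v_3$ with $\phi(v_2v_3)=\phi(v_0v_1)$.

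However, your claimed handling of the chord case is wrong, and this is a genuine gap. You correctly observe that properness forces $\phi(v_0v_2)\notin\{\phi(v_1v_2),\phi(v_2v_3)\}$, and you handle the case $\phi(v_0v_3)=\phi(v_1v_2)$. But the observation about $v_0v_2$ does not ``restore the count'' --- it is actively bad for you. If $v_2\in N(v_0)$, then $v_2$ must be \emph{excluded} from the pendant pool (else the would-be broom contains the triangle $v_0v_1v_2$ and is not a tree), yet its edge $v_0v_2$ does not absorb either forbidden colour, so both forbidden colours can still land on the remaining $d(v_0)-2\ge m+1$ candidates in $N(v_0)\setminus\{v_1,v_2\}$, leaving only $m-1$ usable leaves. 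No further case split fixes this: the conclusion is in fact false here. Take $v_0$ with neighbours $v_1,v_2,a_2,\dots,a_{m+2}$, add edges $v_1v_2$ and $v_2v_3$ (with $v_3$ a leaf), and set $\phi(v_0v_1)=1$, $\phi(v_0v_2)=4$, $\phi(v_1v_2)=2$, $\phi(v_2v_3)=3$, $\phi(v_0a_2)=2$, $\phi(v_0a_3)=3$, $\phi(v_0a_i)=i+1$ for $i\ge 4$; this is a proper, rainbow $B_{4,m}$-free colouring with $d(v_0)=m+3$ and $\phi(v_0v_1)\neq\phi(v_2v_3)$. (The paper's own proof silently ignores this configuration as well; this is harmless there because every invocation of the lemma is inside a tree, where $v_0v_2,v_0v_3\notin E(G)$ automatically, but your ``case split along these lines should make the counting tight'' is not a valid way to close the argument for the statement as written.)
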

\begin{proof}
Take any proper colouring $\phi$ of $G$. If $\phi(v_0v_1) \neq \phi(v_2v_3)$, then $\phi(v_0v_1) \neq \phi(v_1v_2)$ and $d(v_0)-1\ge m+2$ implies that there exist $u_1,\ldots,u_m \in N(v_0)\setminus\{v_1\}$ such that $u_1v_0,u_2v_0,\ldots,u_mv_0$ do not receive the colours $\phi(v_1v_2),\phi(v_2v_3)$.
Thus, the edges $u_1v_0\bc u_2v_0\bc \ldots\bc u_mv_0$ and $v_0v_1\bc v_1v_2\bc v_2v_3$ form a rainbow copy of $B_{4,m}$.
This contradicts the fact that $G$ is properly rainbow $B_{4,m}$-saturated.
Thus, $\phi(v_0v_1)=\phi(v_2v_3)$.

Now, $v_2$ has at most one neighbour $v_3$ such that $\phi(v_2v_3)=\phi(v_0v_1)$, so $d(v_2)\le 2$.
\end{proof}

\begin{theorem} \label{thm:broom4_lower_bound}
For all $m \in \bb{N}$ and $n \ge 3$, $\prsat(B_{4,m})\ge n-1$.
\end{theorem}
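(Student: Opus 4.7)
My plan is to argue by contradiction. Suppose $G$ is properly rainbow $B_{4,m}$-saturated with $e(G)\le n-2$; then $G$ is disconnected. Fix a proper rainbow $B_{4,m}$-free colouring $\phi$ of $G$ guaranteed by condition~(1). The goal is to exhibit a non-edge $uv$ together with a proper colouring of $G+uv$ containing no rainbow $B_{4,m}$, contradicting condition~(2) of saturation.

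The easiest subcase is when $G$ has two isolated vertices $u,v$: adding $uv$ (coloured arbitrarily) just creates a $K_2$-component, which cannot house $B_{4,m}$ since $|V(B_{4,m})|=m+4\ge 5$, and leaves every other component of $G$ unchanged. Hence $G$ has at most one isolated vertex. In the remaining cases I would pick $u$ and $v$ in distinct components and extend $\phi$ to $\phi'$ on $G+uv$ by colouring $uv$ with a fresh colour $c^*$. Any rainbow $B_{4,m}$ in $(G+uv,\phi')$ must then contain $uv$, and since $uv$ is a cut-edge of the broom, removing it partitions the broom into two subtrees, one inside $u$'s component and one inside $v$'s component. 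Enumerating the four edge types of $B_{4,m}$ shows that the only possible split types are $\{K_1,B_{4,m-1}\}$, $\{K_1,B_{3,m}\}$, $\{K_2,K_{1,m+1}\}$, and $\{P_3,K_{1,m}\}$. This forces strong structural constraints: for instance, if $u$ is isolated, then every vertex in any other component $C$ of $G$ must lie at a prescribed position of a rainbow $B_{3,m}$ (as the non-broom path-end) or a rainbow $B_{4,m-1}$ (as the broom hub) inside $C$, which already forces $|C|\ge m+3$ and the existence of a vertex of degree at least $m$ in $C$.

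Turning these pointwise structural conditions into the global bound $e(G)\ge n-1$ is the main obstacle. My plan is to combine them with Lemma~\ref{lem:broom4_containing_tree}: once any component contains a vertex of degree $\ge m+3$, the lemma forces every distance-$2$ vertex from it to have degree at most $2$, and the tension with the requirement that \emph{every} vertex in that component sit near a prescribed broom substructure should force an extra edge (a cycle) in some component, raising $e(G)$ to at least $n-1$. When no vertex of degree $\ge m+3$ exists, my backup strategy is to choose the colour of $uv$ non-freshly—matching a colour $\phi(f)$ for some edge $f$ at distance two from $uv$—so that one colour is repeated in every potential rainbow broom through $uv$, again contradicting saturation. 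I anticipate the hardest subcase to be when $G$ has two moderately sized components each without a high-degree hub, since killing every broom through $uv$ then demands delicate coordination between both components' structures and the colour repetition guaranteed by $\phi$.
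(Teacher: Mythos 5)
There is a genuine gap in your proposal: you correctly reduce to the case that $G$ is disconnected and dispose of the trivial case of two isolated vertices, but you do not identify the key reduction that makes the problem tractable, and you explicitly acknowledge that you cannot see how to turn your structural observations into the global edge count. The paper's proof opens by observing that it suffices to show that no \emph{tree} on at least $3$ vertices is properly rainbow $B_{4,m}$-saturated. Once that is established, the argument is immediate: any tree component of a saturated $G$ is itself properly rainbow $B_{4,m}$-saturated (restrict the good colouring of $G$ to it, and note that a rainbow-free colouring of that component plus an internal edge could be combined with the colourings of the other components), hence every tree component is a $K_1$ or $K_2$; at most one such component exists (joining two $K_1/K_2$ components gives at most $4 < |V(B_{4,m})|$ vertices and so no new broom); every non-tree component has at least as many edges as vertices; summing gives $|E(G)|\ge n-1$. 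This collapses the problem from ``arbitrary disconnected graph'' to ``single tree,'' where a recolour-and-add-edge argument (analogous to the one you are groping toward, but on a tree) can be carried out via a detailed case analysis.

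Your proposal instead attempts to argue directly about cross-component edges in an arbitrary disconnected $G$, and you are left trying to convert ``every vertex of every other component must sit in a prescribed place in a rainbow $B_{3,m}$ or $B_{4,m-1}$'' into a counting bound. That conversion is exactly what is missing, and I do not see how to complete it along the lines you propose. Your backup strategy for the no-high-degree-hub case is also unsound as stated: colouring $uv$ with a colour $\phi(f)$ for a single edge $f$ at distance $2$ only blocks brooms that happen to contain $f$, and $B_{4,m}$ has many edges, so a broom through $uv$ need not use $f$ at all. The missing insight is the component-by-component reduction above; with it in hand, the cross-component split analysis you wrote down is unnecessary, and the rest of the effort should go into proving that no nontrivial tree is saturated, which the paper accomplishes through a sequence of structural claims (Claims~\ref{cl:fewleaves1broom}--\ref{cl:3colsbroom} and the surrounding case analysis on $m=1$ versus $m>1$).
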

The main ideas in the proof of Theorem~\ref{thm:broom4_lower_bound} are the same as in the proof of Theorem~\ref{thm:gen_path_lower_bound}, but the case analysis is slightly more complicated because in general, shorter rainbow paths are more difficult to avoid than longer rainbow paths.
As in the proof of Theorem~\ref{thm:gen_path_lower_bound}, for a tree $T$ and $u,v \in V(T)$, let $P_{uv}$ denote the unique $u$-$v$ path in $T$.
 
\begin{proof}
    We will show that no tree on $n \ge 3$ vertices is properly rainbow $B_{4,m}$-saturated. This suffices, as a disconnected graph in $\Prsat(n,B_{4,m})$ has at most one tree component, which is necessarily a $K_1$ or $K_2$ (any larger tree is not $P_5$-saturated and adding an edge between two $K_1$ or $K_2$ components could not yield a copy of $B_{4,m}$). 

    Suppose, in order to obtain a contradiction, that for some $n \ge 3$, $T$ is an $n$-vertex tree that is properly rainbow $B_{4,m}$-saturated. Let $\phi$ be a proper colouring of $T$ containing no rainbow $B_{4,m}$.
    As in the proof of Theorem~\ref{thm:gen_path_lower_bound}, we will recolour $T$ and add an edge $e$ to $T$ in a specific colour so that the resulting colouring of $T+e$ is still proper and rainbow $B_{4,m}$-free, contradicting $T$ being properly rainbow $B_{4,m}$-saturated.
    Observe that $T$ contains at least one vertex of degree at least $m+3$ (else the graph obtained by adding an edge between two leaves of $T$ can be $(m+2)$-edge-coloured and hence $T$ is not properly rainbow $B_{4,m}$-saturated).

    \begin{claim}\label{cl:fewleaves1broom}
        Let $v \in T$ have degree $d \ge 3$. Then at most $d-2$ neighbours of $v$ are leaves. 
    \end{claim}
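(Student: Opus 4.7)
The plan is a proof by contradiction. I would suppose that $v \in T$ has degree $d \ge 3$ and at least $d-1$ leaf neighbours, and then produce a non-edge $e$ together with a proper extension of $\phi$ to $T+e$ that has no rainbow $B_{4,m}$, contradicting the properly rainbow $B_{4,m}$-saturation of $T$. I would split into two cases according to whether $v$ has exactly $d-1$ or all $d$ of its neighbours being leaves.

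If every neighbour of $v$ is a leaf, then connectedness of $T$ forces $T = K_{1,d}$ with $d=n-1$. For any two leaves $\ell_1,\ell_2$, a direct check shows $K_{1,n-1}+\ell_1\ell_2$ contains no copy of $B_{4,m}$ at all: the only vertex of degree at least $m+1$ is the centre $v$, so in any prospective embedding $v$ would have to simultaneously play the role of the endpoints $v_1$ and $v_4$ of the underlying $P_4$. Hence any proper colouring of $T+\ell_1\ell_2$ is trivially rainbow $B_{4,m}$-free. Otherwise, let $\ell_1,\ldots,\ell_{d-1}$ be the leaf neighbours and $w$ the unique non-leaf neighbour of $v$. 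I would take $e=\ell_1\ell_2$ and extend $\phi$ by defining $\phi(\ell_1\ell_2):=\phi(vw)$. Properness is immediate because $\phi(v\ell_1),\phi(v\ell_2),\phi(vw)$ are pairwise distinct as colours at $v$. Any $B_{4,m}$ in $T+e$ that avoids $\ell_1\ell_2$ is an honest subgraph of $(T,\phi)$ and hence is not rainbow.

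The content then lies in ruling out rainbow copies of $B_{4,m}$ that do use $\ell_1\ell_2$. The key observation is that $\ell_1$ and $\ell_2$ have degree $2$ in $T+e$, so each corresponds to a vertex of degree at most $2$ in $B_{4,m}$. I would then case-check which edge of $B_{4,m}$ is identified with $\ell_1\ell_2$. The cases $v_1v_2$ and $v_2v_3$ force $v_1=v=v_4$ (following the only other neighbours of $\ell_1$ and $\ell_2$), a contradiction. The case $v_3v_4$ forces $v_2=v$ and then, via the degree requirement on $v_1$, $v_1=w$; in this embedding the edges $wv$ and $\ell_1\ell_2$ both carry the colour $\phi(vw)$, so the copy is not rainbow. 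The remaining case $v_1u_i$ can arise only when $m=1$ (since otherwise $d_{B_{4,m}}(v_1)=m+1\ge 3$ exceeds the available degree), and it forces the same repetition of $\phi(vw)$ on $vw$ and $\ell_1\ell_2$.

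The main obstacle is the last case analysis, and in particular the degenerate regime $m=1$, where $B_{4,1}=P_5$ has no vertex of degree exceeding $2$ and hence every one of its edges becomes a candidate image of $\ell_1\ell_2$; each such candidate must be individually ruled out or shown to induce a repeat of the colour $\phi(vw)$. Once one keeps careful track of which vertex of $T+e$ is playing the role of the ``pivot'' $v$ in each embedding, the bookkeeping is mechanical.
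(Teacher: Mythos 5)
Your construction is the same as the paper's: add an edge between two leaf neighbours of $v$ and colour it with $\phi(vw)$, where $w$ is the unique non-leaf neighbour. Where you differ is in how you verify that no rainbow $B_{4,m}$ is created. The paper observes that since $\ell_1,\ell_2$ each have degree $2$ in $T+e$, any $P_5$ through $\ell_1\ell_2$ must exit the pair via $v$ and then continue through the edge $vw$ (a leaf neighbour of $v$ gives a dead end), so such a $P_5$ repeats the colour $\phi(vw)$; since $B_{4,m}\supseteq P_5$, this is already enough. Your explicit case analysis on which edge of $B_{4,m}$ the new edge $\ell_1\ell_2$ realises reaches the same conclusion but is more laborious and, as written, slightly inaccurate: in the case where $\ell_1\ell_2$ plays the role of $v_1v_2$ (your $v_1$ being the brush vertex of degree $m+1$), the constraint is that $m=1$ and then both the remaining neighbour of $v_1=\ell_1$ and the remaining neighbour of $v_2=\ell_2$ must equal $v$, i.e.\ $u_1=v_3=v$ — not $v_1=v=v_4$ as you wrote. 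The contradiction is of exactly the same flavour (two distinct vertices of the embedded $B_{4,m}$ are forced to coincide at $v$), so this is a bookkeeping slip rather than a gap, but it should be corrected. The paper's $P_5$ reduction avoids this bookkeeping entirely and handles all $m$ uniformly, which you correctly flag as the source of the case clutter in your version.
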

    \begin{proof}
        Suppose $v$ has degree $d \ge 3$ and $d-1$ leaf neighbours. Let $x$ be a non-leaf neighbour of $v$ (which exists else $T$ is a star and $T + e$ contains no $B_{4,m}$ for any $e$). Then adding an edge coloured $\phi(xv)$ between any two leaves incident to $v$ cannot create a rainbow $P_5$ as it must use the edge $xv$ and would then have a repeated colour. Thus, since $B_{4,m}$ contains $P_5$, adding such an edge cannot create a rainbow $B_{4,m}$.
    \end{proof}

    Choose a root $r$ for $T$. Let $y$ be a vertex with $d(y)\ge m+3$ of maximal distance from $r$, so that every vertex $x$ with $d(x,r)>d(y,r)$ has $d(x)\le 2$.
    For each $v \in V(T)$, let $T_v$ denote the subtree of $T$ rooted at $v$, let $f(v)$ be the parent of $v$ if $v\neq r$, and let $f(v)=v$ if $v=r$.
    Let $v_1,\ldots,v_k$ be the children of $y$.
    For each $i \in [k]$, let $T_i:= T_{v_i}$.
    
    \begin{claim}\label{cl:fewleaves2broom}
    Each $T_i$ is either a path with endpoint $v_i$, or $v_i$ has exactly one leaf neighbour and is the unique vertex of $T_i$ that has degree 3 in $T$.
    \end{claim}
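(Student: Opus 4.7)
The plan is to mirror the strategy of the proof of Claim~\ref{clm:gen_path_lower}: assume for contradiction that the claimed structure of $T_i$ fails for some $i$, then construct a proper colouring $\phi'$ of $T+e$ for a well-chosen non-edge $e$ such that $\phi'$ is rainbow $B_{4,m}$-free, contradicting proper rainbow $B_{4,m}$-saturation. By Lemma~\ref{lem:broom4_containing_tree}, each child $x_j$ of $v_i$ satisfies $d_T(x_j) \le 2$, so every $T_{x_j}$ is a path rooted at $x_j$; let $\ell_j = |V(T_{x_j})|$, let $u_j$ be the leaf end of $T_{x_j}$, and arrange $\ell_1 \ge \cdots \ge \ell_t$. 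Claim~\ref{cl:fewleaves1broom} precludes $t=2$ with two leaf children, so the failure cases reduce to (A) $t \ge 3$, or (B) $t=2$ with $\ell_1, \ell_2 \ge 2$. Write $c_0 = \phi(v_i y)$ and $c_j = \phi(v_i x_j)$ for $j \in [t]$.

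Now define $\phi'$ by recolouring each $T_{x_j}$ alternately with the two colours $c_0, c_j$, keeping $v_i x_j$ at colour $c_j$ and assigning $c_0$ to the next edge along $T_{x_j}$; this is proper, and each $T_{x_j}$ together with $v_i x_j$ uses only the two colours $\{c_0, c_j\}$. Take $e = u_1 u_2$ and extend $\phi'$ by choosing $\phi'(u_1 u_2)$ to be the first colour in the list $c_0, c_1, c_2$ distinct from both $\phi'(u_1 f(u_1))$ and $\phi'(u_2 f(u_2))$. Since $\phi'(u_1 f(u_1)) \in \{c_0, c_1\}$ and $\phi'(u_2 f(u_2)) \in \{c_0, c_2\}$, such a choice always exists, and $\phi'$ is proper on $T+u_1u_2$.

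To finish, I would show $(T + u_1 u_2, \phi')$ contains no rainbow $B_{4,m}$. Any rainbow $B_{4,m}$ contains a rainbow $P_5$ as its longest path, so I would split into three regimes based on this $P_5$: (a) it avoids $T_i$; (b) it passes through $v_i$ but not through $u_1 u_2$; (c) it uses $u_1 u_2$. In regime (c), the restriction $\phi'(u_1u_2) \in \{c_0, c_1, c_2\}$ together with the two-colour alternating pattern on each side of $u_1 u_2$ (colours $\{c_0, c_1\}$ in $T_{x_1}$ and $\{c_0, c_2\}$ in $T_{x_2}$) forces a colour repetition among the four edges of any attempted rainbow $P_5$. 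In regimes (a) and (b), I would verify that every edge of the hypothetical rainbow $B_{4,m}$, including the $m-1$ extra pendants at its center $v^*$, satisfies $\phi = \phi'$; hence the rainbow $B_{4,m}$ lifts to one in $(T, \phi)$, contradicting the rainbow $B_{4,m}$-freeness of $\phi$.

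The main obstacle is this lifting step in (a) and (b), which requires careful tracking of where $v^*$ lies. The case $v^* \in T - T_i$ is immediate, since the only edge joining $T - T_i$ to $T_i$ is $yv_i$, and $\phi'(yv_i) = c_0 = \phi(yv_i)$, so all pendants and path edges of the $B_{4,m}$ retain their original colours. For $v^* = v_i$, the $m+1$ edges incident to $v_i$ keep colours from $\{c_0, c_1, \ldots, c_t\}$ (unchanged), and the only rainbow extension of a path from $v_i$ back into $T - T_i$ must go via $y$, whose subsequent edges are unchanged. Finally, vertices of $T_i \setminus \{v_i\}$ are excluded as centers when $m \ge 2$ since they have degree at most $2$, while the $m=1$ case (where $B_{4,1}=P_5$) follows directly from (a)--(c), which already rule out rainbow $P_5$'s beyond those inherited from $(T,\phi)$.
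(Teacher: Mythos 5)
Your proposal has a genuine gap at its very first step, and it is not a small one to fill: you assert that because each child $x_j$ of $v_i$ is at distance~$2$ from $y$, Lemma~\ref{lem:broom4_containing_tree} gives $d_T(x_j)\le 2$, \emph{so every $T_{x_j}$ is a path}. The first half is correct, but the conclusion does not follow. The lemma only constrains vertices at distance exactly~$2$ from a vertex of degree $\ge m+3$; it says nothing about vertices at distance $\ge 3$ from $y$, and the choice of $y$ as the deepest vertex of degree $\ge m+3$ only guarantees that deeper vertices have degree $\le m+2$, not $\le 2$. A grandchild of $v_i$ could perfectly well have degree~$3,\dots,m+2$ for all you have established, in which case $T_{x_j}$ branches and is not a path, your alternating two-colour recolouring of it is not well-defined, and your whole reduction to cases (A) $t\ge 3$ and (B) $t=2,\ \ell_1,\ell_2\ge 2$ collapses. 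Indeed the cases you enumerate only detect a failure of the claim \emph{at $v_i$ itself}; they do not cover the failure mode where $d(v_i)\le 3$ and $v_i$ has at most one leaf neighbour, yet some other vertex of $T_i$ at distance $\ge 3$ from $y$ has degree $\ge 3$. That missing case is precisely the substance of the claim.

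The paper closes this gap by an argument your proposal never replicates: it first shows that $T_1$ contains at most one leaf at distance $\ge 3$ from $y$. This is done by recolouring every subtree $T_x$ with $d(y,x)=3$ using a fixed set $C$ of only $m+2$ colours (possible since $\phi(f(x)x)=\phi(yv_1)$ by Lemma~\ref{lem:broom4_containing_tree} and all degrees below $y$ are $\le m+2$), and then adding an edge between two such leaves in a colour from $C$; any purported $B_{4,m}$ either lives entirely in the $C$-coloured region (too few colours) or must pass through $y$ and repeat $\phi(yv_1)$. From ``at most one such leaf'' it follows that no vertex at distance $\ge 3$ from $y$ has degree $\ge 3$ (two subtrees would give two far leaves), and the lemma rules out degree $\ge 3$ at distance exactly~$2$; together with Claim~\ref{cl:fewleaves1broom} this forces the structure in the statement. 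Your edge-addition and recolouring machinery is reasonable in spirit, but without first establishing that $T_i\setminus\{v_i\}$ has maximum degree~$2$, the argument is circular.
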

    
    \begin{proof}
        We will prove the claim for $T_1$; it will follow identically for the other $T_i$. We first show that $T_1$ contains at most one leaf $x$ with $d(x,y) \ge 3$. Let $C'$ be the collection of colours incident to $v_1$ and let $C$ be a collection of $m+2$ colours such that $C' \subseteq C$ (since $y$ is the only vertex of $\{y\} \cup \bigcup_{i=1}^{k}V(T_i)$ with degree at least $m+3$). Let $\phi'$ be obtained from $\phi$ by properly recolouring $T_x$ using colours from $C$, for every $x \in T_{1}$ with $d(y,x) = 3$. This is possible because for each such $x$, by Lemma~\ref{lem:broom4_containing_tree} $\phi(f(x)x) = \phi(y v_1)$ and every vertex in $T_x$ has degree at most $m+2$ in $T$.

        $(T,\phi')$ contains no rainbow $B_{4,m}$. To see this, observe that such a $B_{4,m}$ would have to contain $P_{xy}$ for some vertex $x \in T_1$ with $d(x,y) \ge 3$. However, since $d(y) \ge m+3$, by Lemma~\ref{lem:broom4_containing_tree}, each such path contains a repeated colour, namely $\phi(yv_1)=\phi(xf(x))$. 

    Suppose $T$ contains leaves $u,w$ in $T_1$ such that $d(y,u)$ and $d(y,w)$ are both at least 3. Then the addition of $uw$ in a colour from $C$ not incident to $u$ or $w$ cannot create a rainbow $B_{4,m}$ (such a $B_{4,m}$ would have to contain a path that passes through $y$ and hence contain a repeated colour, as in the previous paragraph). So there is at most one such vertex.

    Now suppose $v_1$ has two distinct leaf neighbours $u,w$. Then the addition of $uw$ in the colour $\phi(yv_1)$ preserves the proper colouring, and any copy of $B_{4,m}$ containing $uw$ either receives only colours in $C$ or contains the repeated colour $\phi(yv_1)$. So $v_1$ has at most one leaf neighbour.

    Using Lemma~\ref{lem:broom4_containing_tree}, we obtain that there can be no degree 3 vertex $x$ in $T_1$ with $d(x,y) \ge 2$. The claim follows.
    \end{proof}
    
    \begin{claim} \label{cl:notnonpathbroom}
        If $m>1$, then $T$ is not properly rainbow $B_{4,m}$-saturated.
    \end{claim}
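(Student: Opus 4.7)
The plan is to derive a contradiction to $T$ being properly rainbow $B_{4,m}$-saturated, following the template of the proof of Theorem~\ref{thm:gen_path_lower_bound}. By Claim~\ref{cl:fewleaves1broom}, $y$ has at least two non-leaf neighbours, so without loss of generality I take $v_1, v_2$ to be non-leaves. For $i \in \{1, 2\}$, let $P_i = v_i, x^i_1, \ldots, x^i_{\ell_i} = u_i$ denote a longest $v_i$-to-leaf path in $T_i$, descending through $v_i$'s non-leaf child when $T_i$ falls into Case~2 of Claim~\ref{cl:fewleaves2broom} (this forces $\ell_i \ge 2$ in that sub-case). Set $c_i := \phi(yv_i)$ and $c_0 := \phi(yf(y))$ if $y \ne r$, or a fresh colour otherwise. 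The goal is to exhibit a proper rainbow $B_{4,m}$-free colouring $\phi'$ of $T + u_1u_2$, contradicting the saturation of $T$.

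I would construct $\phi'$ by starting from $\phi$, recolouring each $P_j$ (over all $j \in [k]$ with $v_j$ non-leaf) alternately in colours $c_0$ and $c_j$, beginning with $c_0$ on the edge $v_j x^j_1$; and in Case~2 of Claim~\ref{cl:fewleaves2broom} also adjusting $\phi'(v_j w_j)$ as necessary to preserve properness at $v_j$. I would then colour $u_1u_2$ with the first colour in the ordered list $c_0, c_1, c_2$ that preserves properness at both $u_1$ and $u_2$. The verification that $(T + u_1u_2, \phi')$ contains no rainbow $B_{4,m}$ hinges on $m > 1$, which forces $d(v_0) \ge m + 1 \ge 3$ for the endpoint $v_0$ of any copy of $B_{4,m}$; hence $v_0$ must be $y$, an ancestor of $y$, or a Case~2 vertex $v_j$. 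When $v_0 = y$ and the spine descends into $P_j$, the spine colour pattern $c_j, c_0, c_j$ (using the alternating recoloring together with Lemma~\ref{lem:broom4_containing_tree}) forces a repeat; when $v_0 = v_j$ is a Case~2 vertex, the pendant edge $v_j x^j_1$ (coloured $c_0$) shares its colour with either $yf(y)$ or $v_{j'}x^{j'}_1$ on the spine; when $v_0$ is an ancestor of $y$, the copy avoids all recoloured edges and would have existed already in $(T, \phi)$; and when the copy uses the new edge $u_1u_2$, a short analysis pins $v_0 = y$ and shows that the chosen colour of $u_1u_2$, combined with the alternating pattern on $P_1, P_2$, forces a repeated colour.

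The main obstacle will be the short-path sub-case $\ell_1 = \ell_2 = 1$, in which $T_1$ and $T_2$ are single pendant edges and both symmetric spines $y, v_i, u_i, u_{3-i}$ (for $i \in \{1,2\}$) can potentially carry a rainbow $B_{4,m}$ with $v_0 = y$. In this degenerate case, no single choice of colour for $u_1u_2$ from $\{c_0, c_1, c_2\}$ simultaneously breaks both spines while preserving properness, so a different edge (such as one between $v_1$ and a pendant leaf in $T_2$, or between two Case~2 pendants $w_1, w_2$) must be added, tailored to the structure of $T_1$ and $T_2$. The remaining sub-cases — longer paths and Case~2 pendant interactions — are more routine once the alternating framework and the degree constraint from $m > 1$ are combined with Lemma~\ref{lem:broom4_containing_tree}.
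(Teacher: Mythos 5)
Your proposal sketches the easy cases but explicitly punts on the hard one, and that hard case is exactly where the paper's proof has to do genuinely different work. You say ``no single choice of colour for $u_1u_2$ from $\{c_0,c_1,c_2\}$ simultaneously breaks both spines \dots so a different edge \dots must be added, tailored to the structure of $T_1$ and $T_2$,'' but you never exhibit that edge or verify that adding it fails to create a rainbow $B_{4,m}$. In the paper's argument this sub-case (Case~2 there, where all but at most one $T_i$ has $d(u_i,y)\le 2$) is handled not by adding an edge between two leaf-tails at all: when $v_1$ and $v_2$ are leaves one adds $v_1v_2$ in colour $c_0$, and otherwise one adds $yx_1$ (an edge from the high-degree vertex $y$ to the tip of a short branch) in any available colour, with a replacement argument showing that any rainbow $B_{4,m}$ using $yx_1$ but not $x_1v_1$ would yield one already present. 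That ``swap $yx_1$ for another edge at $y$'' step is a real idea your outline does not anticipate and would have to reinvent. Without it the proof of the claim is incomplete in precisely the configurations it most needs to rule out.

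Two smaller points. First, your opening reduction ``by Claim~\ref{cl:fewleaves1broom}, $y$ has at least two non-leaf neighbours, so WLOG $v_1,v_2$ are non-leaves'' is not justified: Claim~\ref{cl:fewleaves1broom} bounds leaf neighbours of $y$, but one of the guaranteed non-leaf neighbours could be $f(y)$ rather than a child $v_i$, so you cannot assume two of the $v_i$ are non-leaves (the paper makes no such assumption and instead case-splits on $d(u_i,y)$; note that $d(u_i,y)=1$, meaning $v_i$ is a leaf, is one of its admissible sub-cases). Second, your recolouring scheme (alternating $c_0,c_j$ on $P_j$ starting with $c_0$ on $v_jx^j_1$) differs from the paper's (alternating $c_j,\phi(v_jx_j)$ on $P_{x_ju_j}$ only, leaving $v_jx_j$ untouched); your version requires the ad hoc fix ``adjust $\phi'(v_jw_j)$ as necessary,'' which you do not specify, whereas the paper's scheme keeps $\phi$ unchanged at $v_j$ and so never disturbs properness or the rainbow-free property locally. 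None of this is fatal by itself, but combined with the unresolved degenerate case the proposal does not constitute a proof of the claim.
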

    \begin{proof}
        Suppose $m>1$.
        For each $1\le i \le k$, let $u_i$ be the vertex in $T_i$ of maximal distance from $y$ ($u_i$ is unique by Claim~\ref{cl:fewleaves2broom}).
        Let $x_i$ be the neighbour of $v_i$ in $P_{v_iu_i}$ (if it exists), and let $z_i$ be the neighbour of $v_i$ not in $P_{v_iu_i}$ (if it exists; $z_i$ is a leaf by Claim~\ref{cl:fewleaves2broom}).
        Let $c_0:=\phi(yy')$, and for each $i \in [k]$, let $c_i:=\phi(yv_i)$.
        
        For each $i \in [k]$, if $x_i$ exists, properly recolour the path $P_{x_iu_i}$ in alternating colours $c_i,\phi(v_ix_i)$, and call the resulting colouring $\phi'$.
        Note that any copy of $B_{4,m}$ that contains any recoloured edge in some $P_{x_iu_i}$ contains at least three edges in the $2$-coloured path $P_{yu_i}$ and is thus not rainbow.
        So $(T,\phi')$ is rainbow $B_{4,m}$-free.
        
        \textbf{Case 1:} Suppose that there exist distinct $i,j \in [k]$ such that $d(u_i,y)\ge 3$; without loss of generality, $i=1$ and $j=2$.
        Then, since $d(v_1),d(v_2)\le 3$ and $d(v) \le 2$ for all $v \in (V(T_1)\setminus\{v_1\})\cup(V(T_2)\setminus\{v_2\})$, adding $u_1u_2$ to $T$ only creates a copy $B$ of $B_{4,m}$ if $m=2$ and every edge incident with either $v_1$ or $v_2$ is in $B$.
        But then $B$ contains three edges in one of the $2$-coloured paths $P_{yu_1}$ or $P_{yu_2}$ and is thus not rainbow.
        Thus, $T$ is not properly rainbow $B_{4,m}$-saturated.

        \textbf{Case 2:} Suppose that there exists $i \in [k]$ such that $d(u_j,y)\le 2$ for all $j\neq i$.
        Since $k\ge m+3>2$, there exist distinct $j,j' \in [k]\setminus\{i\}$ such that $d(u_j,y)=d(u_{j'},y) \in \{1,2\}$; without loss of generality, $j=1$ and $j'=2$.
        Note that by Claim~\ref{cl:fewleaves2broom}, $v_1$ and $v_2$ each cannot have two leaf neighbours, so $d(v_1)=d(v_2)\le 2$.
        
        If $v_1$ and $v_2$ are leaves, then by adding $u_1u_2=v_1v_2$ to $T$ in colour $c_0$, any copy of $B_{4,2}$ containing $v_1v_2$ (and thus any copy of $B_{4,m}$) either contains the edge $yy'$ or contains every edge incident with $v_i$ and thus has the repeated colour $c_0$.        
        Otherwise, adding the edge $yu_1=yx_1$ in any available colour, any copy of $B_{4,m}$ containing both $yx_1$ and $x_1v_1$ either contains the edge $yy'$ or contains every edge incident with $v_i$ and thus has the repeated colour $c_0$.
        If there is a copy $B$ of $B_{4,m}$ containing $yx_1$ and not $x_1v_1$, then since edges in $E(B)\setminus\{v_1x_1\}$ receive $m+2$ colours and $d_T(y)\ge m+3$, it is possible to replace $yx_1$ with an edge incident with $y$ in $T$ to form a rainbow copy of $B_{4,m}$ contained in $(T,\phi')$.
        But $(T,\phi')$ is rainbow $B_{4,m}$-free, so this is a contradiction.

        In all cases, $T$ is not properly rainbow $B_{4,m}$-saturated, as desired.
    \end{proof}
    By Claim~\ref{cl:notnonpathbroom}, we can now assume that $m=1$, i.e., $B_{4,m}=P_5$.

    \begin{claim}\label{cl:3colsbroom}
    Let $u,w,w',x,x',y,y'$ be vertices with edges $uw, uw', wx, wx', w'y w'y'$. Then the colours incident to $w$ are the same as the colours incident to $w'$.
    \end{claim}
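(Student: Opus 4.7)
The plan is to label the six edge colours and apply rainbow $P_5$-freeness to each length-$4$ path through $u$ in the given subgraph. Set $a := \phi(uw)$, $b := \phi(uw')$, $c := \phi(wx)$, $d := \phi(wx')$, $e := \phi(w'y)$, $f := \phi(w'y')$. Properness forces $a,c,d$ to be pairwise distinct, $b,e,f$ to be pairwise distinct, and $a \neq b$; the claim then becomes $\{a,c,d\} = \{b,e,f\}$.

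The main tool is that for each $\alpha \in \{x,x'\}$ and each $\beta \in \{y,y'\}$, the path $\alpha, w, u, w', \beta$ is a $P_5$ with colour sequence $\phi(\alpha w), a, b, \phi(w'\beta)$ which must fail to be rainbow in $(G,\phi)$. Since properness rules out coincidences between consecutive entries of this sequence, the forced equality is one of $\phi(\alpha w) = b$, $\phi(\alpha w) = \phi(w'\beta)$, or $a = \phi(w'\beta)$.

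I first argue $a \in \{e,f\}$. If not, then neither of the paths $x,w,u,w',y$ and $x,w,u,w',y'$ can use the third option, so each forces $c = b$ or $c \in \{e,f\}$; as $e \neq f$, the equalities $c = e$ and $c = f$ cannot both hold, so $c = b$. The same argument with $x$ replaced by $x'$ yields $d = b$, contradicting $c \neq d$. By the analogous argument with the two sides of $u$ swapped, $b \in \{c,d\}$.

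This leaves four cases, $(a,b) \in \{e,f\} \times \{c,d\}$, each resolved by the $P_5$ constraint on the unique remaining path through $u$. For instance, if $a = e$ and $b = c$, the path $x',w,u,w',y'$ carries colours $d,e,c,f$; properness together with the standing equalities eliminates every candidate coincidence except $\{d,f\}$ and $\{c,f\}$, and $c = f$ would give $b = f$, violating properness at $w'$. Hence $d = f$, so $\{a,c,d\} = \{e,b,f\} = \{b,e,f\}$. The remaining three cases follow by the obvious symmetries $x \leftrightarrow x'$ and/or $y \leftrightarrow y'$. I expect the only real work to be organising this case analysis cleanly; each individual deduction combines only properness and the $P_5$ constraint.
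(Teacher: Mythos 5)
Your proof is correct and uses the same underlying idea as the paper: apply rainbow $P_5$-freeness to the four length-$4$ paths through $u$ together with properness at $u$, $w$, and $w'$. The paper's version is slightly more streamlined (it first pins $\phi(uw')$ into the colour set at $w$, after which both remaining colours at $w'$ fall into place at once, avoiding your four-way case split), but the two arguments are logically equivalent.
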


    \begin{proof}
        Let $C = \{a,b,c\}$ and suppose $\phi(uw) = a, \phi(wx) = b, \phi(wx') = c$. If $\phi(uw') \notin C$, then to avoid a rainbow $P_5$, both $\phi(w'y)$ and $\phi(w'y')$ must be $a$, a contradiction as $\phi$ is a proper edge colouring. So, without loss of generality $\phi(uw') = b$. But now, if either $\phi(w'y)$ or $\phi(w'y')$ are not in $C$, then we have a rainbow-$P_5$. This completes the claim.
    \end{proof}
 
    \begin{claim}
        No vertex $v_1,\ldots,v_k$ has degree 3.
    \end{claim}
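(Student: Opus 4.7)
The plan is to assume for contradiction that some $v_i$, say $v_1$, has degree $3$, and to exhibit a non-edge $e$ of $T$ together with a proper rainbow-$P_5$-free extension of a recoloured $T$ to $T+e$, contradicting the saturation hypothesis. By Claim~\ref{cl:fewleaves2broom}, $v_1$ has a unique leaf neighbour $z_1$ and a unique non-leaf child $x_1$, and the subtree $T_{x_1}$ is a nontrivial path $x_1,p_2,\ldots,u_1$. Write $c_1=\phi(yv_1)$, $a_1=\phi(v_1z_1)$, and $b_1=\phi(v_1x_1)$. Mirroring the recolouring used in the proof of Claim~\ref{cl:notnonpathbroom}, I would first recolour each $P_{x_iu_i}$ in alternating colours $c_i,\phi(v_ix_i)$ to obtain a proper rainbow-$P_5$-free colouring $\phi'$ in which every path $P_{yu_i}$ is $2$-coloured; Lemma~\ref{lem:broom4_containing_tree}, applied at the vertex $y$ (which has $d(y)\ge 4$), is what makes this recolouring consistent.

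I would then split on whether $y$ has a second neighbour of degree at least three in $T$. \textbf{Case A}: some other neighbour $w'$ of $y$ has degree $\ge 3$ in $T$ (either $w'=v_j$ for some $j\ne 1$, or $w'=f(y)$ when $y\ne r$). Then Claim~\ref{cl:3colsbroom} applied with $u=y$ and $w=v_1$ forces $\{c_1,a_1,b_1\}$ to equal the three-element set of colours incident to $w'$. Since $c_1\ne \phi(yw')$ by propriety at $y$, there are four possible identifications of these colour triples. In each identification, I would add the edge between $z_1$ and a suitable non-$y$ neighbour of $w'$ (for instance $z_1z_j$ when $w'=v_j$ has degree $3$ with leaf neighbour $z_j$) and colour it with a carefully chosen element of the pinned three-colour set; the pinning guarantees that every $P_5$ through the new edge contains a repeated colour while propriety is preserved at both endpoints.

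\textbf{Case B}: every neighbour of $y$ other than $v_1$ has degree at most $2$ in $T$. Then each $T_j$ with $j\ne 1$ is a (possibly trivial) path from $v_j$, and (if $y\ne r$) the parent $f(y)$ has degree at most $2$. Since $d(y)\ge 4$, there are at least three path-children $v_{j_1},v_{j_2},v_{j_3}$ of $y$, and I would add an edge between two leaves $u_{j_1},u_{j_2}$ from among these path-children, colouring it with a colour drawn from among $\{c_1,c_{j_3}\}$ (the precise choice depending on whether the path-children are single vertices or longer paths). The $2$-colouring of each $P_{yu_i}$ under $\phi'$, together with the rigidity supplied by Lemma~\ref{lem:broom4_containing_tree} along every length-$3$ path from $y$, forces every new $P_5$ through the added edge either to lie inside a $2$-coloured path $P_{yu_i}$, or to cross $y$ and repeat the colour chosen for the new edge.

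The main obstacle is the case analysis in Case A: each of the four identifications of colour triples from Claim~\ref{cl:3colsbroom} yields a different ``pinning'' and so requires its own choice of colour for the added edge, together with a verification that every $P_5$ through it contains a repeat and that propriety is maintained at both endpoints. A secondary difficulty in Case B is handling degenerate configurations (for instance, when some $v_{j_\ell}$ is itself a leaf of $T$), where the added edge is incident to a low-depth vertex and the standard $2$-colouring argument must be supplemented by the rigidity of Lemma~\ref{lem:broom4_containing_tree}.
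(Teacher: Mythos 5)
Your case split does not line up with the structural information you actually have, and this creates gaps in both branches. The crucial point is that Claim~\ref{cl:fewleaves2broom} only constrains the subtrees $T_i$ hanging \emph{below} $y$; nothing is known about $f(y)$ or $T$ above $y$. So in Case A, when $w' = f(y)$, you are stuck: even if Claim~\ref{cl:3colsbroom} pins the three colours at $f(y)$, any non-$y$ neighbour $p$ of $f(y)$ has completely unconstrained degree and subtree. You therefore have no guarantee that a colour from the pinned triple is available at $p$ (properness may fail outright), and no way to rule out a rainbow $P_5$ through the new edge escaping into the unknown region past $p$. You also cannot recolour that region safely, because its structure is unknown. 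The paper never runs into this because it only ever applies Claim~\ref{cl:3colsbroom} between two children $v_i,v_j$ of $y$, where Claim~\ref{cl:fewleaves2broom} gives full structural control; the case where $f(y)$ has high degree is simply absorbed into the later colour-based analysis, which never touches $f(y)$'s subtree.

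Case B has a counting gap: when $y\ne r$ and $d(y)=4$ (minimal for $m=1$), the vertex $y$ has three children and one parent, so after removing $v_1$ there are only \emph{two} path-children, not three, and no $v_{j_3}$ exists to supply the colour $c_{j_3}$. More importantly, even when three path-children exist, joining leaves of $T_{j_1}$ and $T_{j_2}$ and colouring from $\{c_1,c_{j_3}\}$ is not sound in the degenerate situation you flag (when the $v_{j_\ell}$ are leaves of $T$): a $P_5$ of the form $v_{j_1},v_{j_2},y,v_i,\cdot$ for some $i\not\in\{1,j_3\}$, or one running up through $f(y)$, is not forced to repeat either chosen colour. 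The paper's treatment of the ``unique $v_1$ of degree $3$'' case instead tracks the three colours $a,b,c$ on the edges at $v_1$, splits on whether $\phi(yv_i)\in\{b,c\}$ for some $i>1$, and the edge added is always one of $xu$, $xw$, or $wu$ where $x$ is the leaf at $v_1$, $u$ is the deepest leaf of $T_1$, and $w$ is the deepest leaf of $T_i$ — every such edge has at least one endpoint inside $T_1$. That colour-driven choice, anchored in $v_1$'s own subtree, is what makes the repeated colour unavoidable; your Case B deliberately avoids $T_1$, which forfeits exactly the control that $v_1$'s degree-$3$ structure provides.
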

    \begin{proof}
        Suppose, without loss of generality, both $v_1$ and $v_2$ have degree 3. By Claim~\ref{cl:3colsbroom}, there is a set $C$ of colours, with $|C|=3$, such that every edge incident to $v_1$ and $v_2$ has a colour in $C$. By Claim~\ref{cl:fewleaves2broom}, $v_1$ and $v_2$ each have exactly one leaf neighbour. So there are vertices $x_1,z_1 \in T_1$ and $x_2,z_2 \in T_2$ such that $d(y,x_1) = d(y,x_2)=2, d(y,z_1) = d(y,z_2) = 3$, and both $\phi(x_1z_1)=\phi(yv_1) \in C$ and $\phi(x_2z_2) = \phi(yv_2) \in C$ (using Lemma~\ref{lem:broom4_containing_tree} to get the equality).
        Let $\phi'$ be obtained from $\phi$ by recolouring $T_{z_1}$ and $T_{z_2}$ using colours from $C$. Observe that $(T,\phi')$ contains no rainbow $P_5$. But now, adding an edge between leaves of $T_{z_1}-z_1$ and $T_{z_2}-z_2$ in an available colour from $C$ will not create a rainbow copy of $P_5$ in $(T,\phi')$, a contradiction. 

        Now suppose $v_1$ is the unique vertex of degree 3 in $\{v_1,\ldots,v_k\}$ and let $N(v_1) = \{x,y,z\}$, where $x$ is a leaf. Suppose $\phi(v_1,y) = a, \phi(v_1,x) = b$, and $\phi(v_1,z) = c$.  For a leaf $t$, let $e_t$ denote the edge of $T$ incident to $t$. We have $T_z = P_{zu}$ for some $u \in T$ and we may assume, by recolouring if necessary, that the colours on $P_{yu}$ alternate between $a$ and $c$ and $\phi(e_u) = a$ (else $\phi(e_u) = c$ and adding $xu$ in colour $a$ will not create a rainbow $P_5$). 

        First consider the case that $\phi(yv_i) = d \in \{b,c\}$ for some $i > 1$.  By recolouring, we may assume that $T_i$ is a path with colours alternating between $a$ and $d$. (Note that if such a recolouring created a rainbow $P_5$, then $\phi$ would have already contained one using vertices from $\{v_1,x,z\}$.) Let $w$ be the leaf in $T_i$. If $\phi(e_w) = d$, then add $xw$ in colour $a$. This cannot create a rainbow $P_5$ else $(T,\phi)$ had one already. Otherwise, $\phi(e_w) = a$ and adding $wu$ in colour $d$ yields no rainbow $P_5$. 

        But now we may assume that the only edge incident to $y$ with colour in $\{b,c\}$ is $yy'$. But now we may recolour $yv_2$ using a colour $d$ from $\{b,c\}\setminus \{\phi(yy')\}$ and recolour $T_2$ in colours alternating between $d$ and $a$. As before, such a recolouring cannot create a rainbow $P_5$, else we would have had one already. So we may assume we are in the previous case and we are done.        
    \end{proof}   

    So now every vertex in $\bigcup_{i=1}^k T_i$ has degree at most 2 in $T$. By recolouring if necessary, we may assume that each $T_i$ is a path with alternating colours $\phi(yy') = d$ and $\phi(yv_i)$.

    For each $i$, let $x_i$ be the leaf in $T_i$. If $\phi(e_{x_i}),\phi(e_{x_j}) \not= d$ for $i\not =j $, then adding $x_ix_j$ coloured $d$ creates no rainbow $P_5$. But now, as $i \ge 3$, there exists $i $ such that $\phi(e_{x_i}) = d$ and adding $x_iy$ in any colour will yield no rainbow $P_5$. This completes the proof.    
\end{proof}

We will now prove the upper bound on $\prsat(n,B_{4,m})$ from Theorem~\ref{thm:broom4}.

\begin{theorem} \label{thm:broom4_upper}
For all $m \in \bb{N}$ and $n \ge 3(m+2)$, 
\[\prsat(n,B_{4,m}) \le 3(m+2)\left\lfloor \frac{n}{3(m+2)} \right\rfloor + \binom{n-3(m+2)\left\lfloor \frac{n}{3(m+2)} \right\rfloor}{2}.\]
In particular, for fixed $m$, $\prsat(n,B_{4,m}) \le n+O(1)$ and $\prsat(n,B_{4,m}) \le n-1$ for infinitely many $n$.
\end{theorem}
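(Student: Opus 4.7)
Set $q=\lfloor n/(3(m+2))\rfloor$ and $r = n-3(m+2)q$, so $0\le r<3(m+2)$. I would construct $G$ as the disjoint union of $q$ copies of a specific ``block'' graph $H$ on $3(m+2)$ vertices with exactly $3(m+2)$ edges, together with a complete graph $K_r$ on the remaining $r$ vertices. This gives $|E(G)|=3(m+2)q+\binom{r}{2}$, matching the target. A natural candidate for $H$ is a suitable $2$-regular configuration whose total length is $3(m+2)$ (for instance, a single cycle $C_{3(m+2)}$, or the disjoint union $3\,C_{m+2}$), chosen so that the needed saturation properties hold.

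To verify property (1), I would equip $G$ with a proper edge coloring that uses at most $m+2$ colors on each component. Since $H$ can be chosen $2$-regular, its chromatic index is at most $3\le m+2$, and similarly for $K_r$ when $r\le m+3$; for larger $r$ one argues more carefully using a specific proper coloring that avoids rainbow $B_{4,m}$ inside $K_r$. Since $B_{4,m}$ is connected and any rainbow copy needs $m+3$ distinct colors, no rainbow $B_{4,m}$ can appear in a component coloured with at most $m+2$ colors, which establishes (1).

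For property (2), I would classify non-edges $e$ of $G$ into three types: (a) $e$ lies inside a single copy of $H$; (b) $e$ joins vertices in two different copies of $H$; (c) $e$ joins a copy of $H$ to $K_r$ (or lies with one endpoint in $K_r$). In each case, at least one endpoint $v$ of $e$ has degree at least $m+3$ in $G+e$. Applying Lemma~\ref{lem:broom4_containing_tree} in its contrapositive form to $v$ and a path of length $3$ extending from a suitable neighbour, together with the proper-colouring constraint forcing $m+1$ distinct colours on the star at $v$, I would exhibit an explicit rainbow $B_{4,m}$ in every proper colouring of $G+e$.

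The main obstacle is case (a): after adding a chord $e$ to a single copy of $H$, we must show that \emph{every} proper colouring of $H+e$ already contains a rainbow $B_{4,m}$ within that small component. Since $H$ has only $3(m+2)$ vertices and $3(m+2)$ edges, the chromatic index of $H+e$ may still be small, so forcing $m+3$ distinct colours on a $B_{4,m}$ template is delicate. The specific combinatorial structure of $H$ (in particular, which pairs of vertices are non-adjacent and how chords interact with the cyclic colour pattern) must be exploited, and the careful choice of $H$ together with verification via Lemma~\ref{lem:broom4_containing_tree} is the key technical step.
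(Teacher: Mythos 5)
Your proposed construction is fundamentally different from the paper's, and it does not work. You take each block $H$ to be a $2$-regular graph (a cycle $C_{3(m+2)}$ or a union $3C_{m+2}$). The paper instead takes $H$ to be a triangle on $\{x_1,x_2,x_3\}$ with $m+1$ pendant edges at each $x_i$: this also has $3(m+2)$ vertices and $3(m+2)$ edges, but crucially each $x_i$ has degree $m+3$.

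The problem with the $2$-regular choice is exactly the degree issue you partially sense in your ``obstacle'' paragraph, but it is fatal, not merely delicate. Your claim that ``in each case, at least one endpoint $v$ of $e$ has degree at least $m+3$ in $G+e$'' is false: if $e$ is a chord inside one copy of a $2$-regular $H$, or an edge between two copies, the endpoints of $e$ have degree $3$ in $G+e$, not $m+3$. For $m\ge 3$ this means $H+e$ has maximum degree $3<m+1$ and so does not even contain $B_{4,m}$ as a subgraph, let alone a rainbow one. Even for $m=1$ the construction fails: $C_9$ plus one chord admits a proper $3$-edge-colouring (one can exhibit it directly, colouring the $4$-cycle and $7$-cycle through the chord with three colours), so every $P_5=B_{4,1}$ in that component repeats a colour, contradicting the saturation requirement. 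The high-degree hub vertices in the paper's $H$ are precisely what lets Lemma~\ref{lem:broom4_containing_tree} bite: there one first characterizes all rainbow $B_{4,m}$-free proper colourings of $H$ (they essentially coincide with a fixed colouring up to symmetry), and then shows any non-edge with an endpoint in $H$ extends some path through a hub to a rainbow broom. A $2$-regular block carries none of that structure and cannot be rescued by a more careful case analysis. The choice of $K_r$ for the remainder instead of a small properly rainbow $B_{4,m}$-saturated graph is a minor cosmetic difference and is not the issue.
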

\begin{proof}
\begin{figure}
    \centering
    \includegraphics{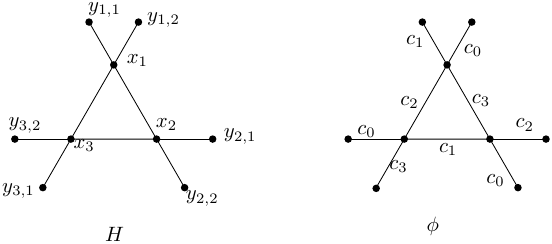}
    \caption{Graph $H$ from Theorem \ref{thm:broom4_upper} in the case $m=1$ (left) and an arbitrary rainbow $B_{4,1}(=P_5)$-free colouring $\phi$ of $H$ (right).}
    \label{fig:p5_fig4}
\end{figure}
Let $H$ be the $3(m+2)$-vertex graph obtained from the triangle on vertices in $X=\{x_1,x_2,x_3\}$ by appending $m+1$ pendant vertices $y_{i,0},y_{i,1},\ldots,y_{i,m}$ to each $x_i$.
For each $i \in [3]$, let $Y_i=\{y_{i,0},\ldots,y_{i,m}\}$, and let $Y=\bigcup_{i=1}^3 Y_i$.

Note that to show that a broom $B\cong B_{4,m}$ with path $v_1\bc v_2\bc v_3\bc v_4$ and vertices $u_1,\ldots,u_m \in N_B(v_4)\setminus\{v_3\}$ is rainbow, it suffices to show $\phi(v_1v_2) \neq \phi(v_3v_4)$ and 
\[ \phi(\{v_1v_2,v_2v_3\})\cap \phi(v_4\{u_1,\ldots,u_m\})=\emptyset,\]
since the distinctness of the other pairs of edges follows from edge adjacency.
Let $\phi:E(H)\to [m+3]$ be an edge-colouring of $H$ defined as follows: for each $i \in \{1,2,3\}$, letting $\{j,k\} = \{1,2,3\}\setminus \{i\}$, set $\phi(x_iy_{i,0}) = \phi(x_jx_k) = i$, and for $\ell \in [m]$, set $x_iy_{i,\ell} = \ell+3$.
\begin{claim} \label{clm:broom4_upper_no_rainbow}
$\phi$ is a rainbow $B_{4,m}$-free proper colouring of $H$.
\end{claim}
\begin{proof}[Proof of Claim]

For all $i \in [3]$, $x_i$ is incident to exactly $m+3$ edges and is incident to edges of every colour in $[m+3]$, and all vertices besides $x_1,x_2,x_3$ are leaves, so $\phi$ is proper.
Take any copy $B$ of $B_{4,m}$ in $H$.
$B$ must contain a length-$2$ path in $H[X]$, say $x_1,x_2,x_3$, else its longest path has length at most $3$.
Thus, without loss of generality, $B$ contains $m$ vertices in $Y_1$ and $1$ vertex $y_3\in Y_3$.
If $y_{1,0} \in V(B)$, then $B$ has two edges $x_1y_{1,0},x_2x_3$ of the colour $c_1$, so $B$ is not rainbow; similarly, $B$ is not rainbow if $y_{3,0} \in V(B)$.
Otherwise, edges from $x_1$ to $V(B)\cap Y_1$ receive every colour in $[4,m+3]$, and $\phi(x_3y_3) \in [4,m+3]$, so $\phi(x_3y_3)$ is a repeated colour in $B$.
Therefore, $\phi$ is a rainbow $B_{4,m}$-free proper colouring of $H$.
\end{proof}
Let $\phi'$ be any rainbow $B_{4,m}$-free proper colouring of $H$.
Let $c_1 = \phi'(x_2x_3)$, $c_2 = \phi'(x_1x_3)$, and $c_3 = \phi'(x_2x_3)$.
Let $C$ be a $m$-subset of $\phi'(x_1Y_1)\setminus\{c_1\}$, which exists since $|Y_1|=m+1$ and edges incident with $x_1$ receive distinct colours.
Note that $c_2,c_3 \notin C$.

\begin{claim} \label{clm:broom4_upper_characterization}
For all $i \in [3]$, $\phi'(x_iY_i)=\{c_i\}\cup C$.
\end{claim}
\begin{proof}[Proof of Claim]
Let $(i,j,k)$ be any ordering of $\{1,2,3\}$.
Then $\phi'(x_iY_i) \subseteq \phi'(x_jY_j) \cup \{c_i\}$ and $c_i \in \phi'(x_iY_i)$, since otherwise, letting $y_i \in Y_i$ such that $\phi'(x_iy_i)\notin \phi'(x_jY_j)\cup\{c_i\}$ and $Y_j'\subseteq Y_j$ be an $m$-subset such that $c_j\notin \phi'(x_jY_j')$, we have that edges in the path $y_i,x_i,x_k,x_j$ and in $x_jY_j'$ form a rainbow copy of $B_{4,m}$ in colours $\{\phi'(y_ix_i),c_j,c_i\}\cup \phi'(x_jY_j')$.
Similarly, $c_i \in \phi'(x_iY_i)$, since otherwise, letting $Y_j'\subseteq Y_j$ be an $m$-subset such that $c_j\notin \phi'(x_jY_j')$ and letting $y_i \in Y_i$ such that $\phi'(x_iy_i)\notin \phi'(x_jY_j')$, we have that edges $y_i,x_i,x_k,x_j$ and $x_jY_j'$ form a rainbow copy of $B_{4,m}$.
So $\phi'(x_1Y_1) = \{c_1\}\cup C$.
Now, $c_1\notin \phi'(x_2Y_2\cup x_3Y_3)$, since edges in $x_2Y_2 \cup x_3Y_3$ are adjacent to $x_2x_3$, which has colour $c_1$.
Thus, $\phi'(x_2Y_2) = \{c_2\} \cup (\phi'(x_1Y_1)\setminus\{c_1\}) = \{c_2\}\cup C$, and similarly, $\phi'(x_3Y_3) = \{c_3\}\cup C$, as desired.
\end{proof}
By Claim~\ref{clm:broom4_upper_characterization} and by symmetry, we can assume that any rainbow $B_{4,m}$-free proper colouring of $H$ is equivalent to $\phi$ up to relabelling vertices and colours.

\begin{claim} \label{clm:broom4_upper_has_rainbow}
Let $G$ be a graph that contains $H$ as a subgraph.
For all $uv \in E(\overline{G})$ such that $u \in V(H)$, $G+uv$ contains a rainbow copy of $P_5$ in any proper colouring.
\end{claim}
\begin{proof}[Proof of Claim]
Suppose for contradiction that there exists a rainbow $B_{4,m}$-free proper colouring $\phi'$ of $G+uv$.
Since $\phi'|_{E(H)}$ is a rainbow $B_{4,m}$-free proper colouring of $H$, without loss of generality, $\phi'$ is an extension of $\phi$ to $E(G+uv)$.

\textbf{Case 1:} $u,v \notin X$.
Without loss of generality, $u \in Y_1$, and either $v \in Y_2$ or $v\notin V(H)$.
If $\phi(ux_1) \in [4,m+1]$, then letting $i \in \{2,3\}$ such that $\phi(x_1x_i) \neq \phi(uv)$ and letting $N\subseteq N_H(x_i)\setminus\{x_1,v\}$ be an $m$-set such that $\phi(vu),\phi(ux_1)\notin \phi(x_iN)$, we have that edges in the path $v,u,x_1,x_i$ and in $x_iN$ form a rainbow copy of $B_{4,m}$.
Otherwise, we can assume that $\phi(ux_1),\phi(vx_2)\notin [4,m+1]$ (or $v\notin V(H)$), so letting $i \in \{2,3\}$ such that either $\phi(uv)=i$ or $\phi(uv)\notin [3]$ and $v\notin Y_i'$, and letting $Y_i'\subseteq Y_i\setminus\{v\}$ be an $m$-set such that $\phi(uv)\notin \phi(x_iY_i')$, we have that edges in the path $v,u,x_1,x_i$ and in $x_iY_i'$ form a rainbow copy of $B_{4,m}$.

\textbf{Case 2:} $\{u,v\} \cap X\neq \emptyset$.
Without loss of generality, $u=x_1$ and $\{v\}\cap V(H) \subseteq Y_2$ (we cannot have $v\in \{x_2,x_3\}$, else $uv \in E(H)$).
Also, $u$ is incident with edges of every colour in $[m+3]$ in $H$, so $\phi(uv) \notin [m+3]$.
So, letting $Y_3'= Y_3\setminus \{y_{3,1}\}$ (so that $3 \notin \phi(x_3Y_3')$), edges in the path $v,u,x_2,x_3$ and in $x_3Y_3'$ form a rainbow copy of $B_{4,m}$ in colours $\{\phi(uv),3,1\}\cup [4,m+3]$.

In both cases, there exists a rainbow copy of $B_{4,m}$, contradicting $\phi'$ being a rainbow $B_{4,m}$-free colouring of $G$.
Therefore, $G+uv$ contains a rainbow copy of $B_{4,m}$ in any proper colouring, proving Claim \ref{clm:broom4_upper_has_rainbow}.
\end{proof}

Let $G = \ff{n}{3(m+2)} H \cup F$, where $F \in \Prsat(n-3(m+2)\left\lfloor \frac{n}{3(m+2)} \right\rfloor,B_{4,m})$.
By Claim \ref{clm:broom4_upper_has_rainbow}, each component of $G$ has a rainbow $B_{4,m}$-free proper colouring, so $G$ has a rainbow $B_{4,m}$-free proper colouring.
For all $e \in E(\overline{G})$, either $e$ has an endpoint in a copy of $H$ or $e \in E(\overline{F})$, so $G+e$ contains a rainbow copy of $B_{4,m}$ in any proper colouring by Claim \ref{clm:broom4_upper_has_rainbow} and the definition of $F$.
Therefore, $G$ is properly rainbow $B_{4,m}$-saturated, so 
\begin{equation} \label{eqn:broom4_upper}
    \prsat(n,B_{4,m}) \le |E(G)| \le 3(m+2)\left\lfloor \frac{n}{3(m+2)} \right\rfloor + \binom{n-3(m+2)\left\lfloor \frac{n}{3(m+2)} \right\rfloor}{2}.
\end{equation}
For the `in particular' statement, note that $\binom{n-3(m+2)\left\lfloor \frac{n}{3(m+2)} \right\rfloor}{2} \le \binom{3(m+2)-1}{2}= O(1)$, and when $n\equiv 1$ or $2 \pmod{3(m+2)}$, (\ref{eqn:broom4_upper}) yields $\prsat(n,B_{4,m})\le n-1$.
This completes the proof of Theorem~\ref{thm:broom4_upper}.
\end{proof}

\section{Upper bound for caterpillars}\label{sec:cat}
In this section, we prove Theorem~\ref{thm:caterpillar}, restated here for convenience.
\caterpillar*

\begin{figure}
    \centering
    \includegraphics{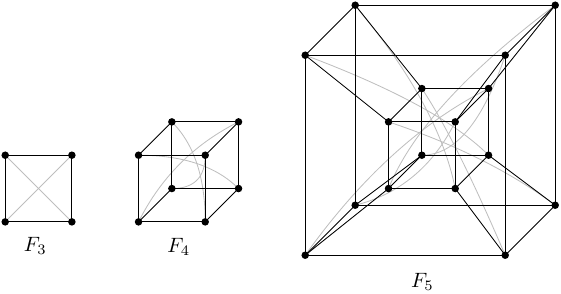}
    \caption{Folded cubes}
    \label{fig:folded_cube}
\end{figure}
For each $\ell \ge 2$, let $F_{\ell+1}$ be the graph with $V(F_{\ell+1})=\bb{F}_2^{\ell}$ and $xy \in E(F_{\ell+1})$ if and only if $y-x \in A_{\ell+1} :=\{\vec{1},e_1,e_2,\ldots,e_{\ell}\}$, where $\vec{1}$ is the all-ones vector and $e_i$ is the vector with $1$ in the $i$th entry and $0$ in all other entries.
We refer to the graphs $F_{\ell+1}$ as \emph{folded cubes}.
Note that $A_{\ell+1}$ is in general position, i.e., when considering $\bb{F}_2^\ell$ as a vector space over $\bb{F}_2$, any subset $S\subseteq A_{\ell+1}$ with $|S|\le \ell$ is linearly independent.
In other words, for all $\emptyset \neq S \subsetneq A_{\ell+1}$, $\sum_{a \in S}a\neq 0$.
However, $\sum_{a \in A_{\ell+1}}a = 0$.
Also, for all $a \in A_{\ell+1}$, the spanning subgraph of $F_{\ell+1}$ with edge set $\{xy\in E(F_{\ell+1}):y-x\neq a\}$ is isomorphic to the hypercube $Q_\ell$.

In this section, for a colouring $\phi$ of $F_{\ell+1}$ and $x,y \in V(F_{\ell+1})$, we will write $\phi(x,y)$ in place of $\phi(xy)$ to avoid confusion with multiplication.

Folded cubes have been studied in the context of the rainbow Tur\'an problem (see e.g. \cite{johnston2020lower}, \cite{halfpap2021rainbowcycles}).
The following result gives the best known general lower bound on the rainbow Tur\'an number for paths.
\begin{theorem}[Johnston, Rombach~\cite{johnston2020lower}] \label{thm:path_hypercube}
Let $\ell \ge 3$.
Then there exists a rainbow $P_\ell$-free proper colouring of $F_{\ell-1}$, namely the colouring $\phi:E(F_{\ell-1})\to A_{\ell-1}$ defined by $\phi(x,y)=y-x$.
\end{theorem}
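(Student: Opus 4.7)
The plan is to exploit the key algebraic fact already noted in the excerpt: the sum of all elements of $A_{\ell-1}$ vanishes in $\mathbb{F}_2^{\ell-2}$, while every proper subset sums to something nonzero. A rainbow $P_\ell$ has $\ell-1$ edges, and the palette $A_{\ell-1}$ has size exactly $\ell-1$, so rainbow-ness forces the path to use \emph{every} direction, and then the endpoints coincide.

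First I would verify properness. At any vertex $x \in \mathbb{F}_2^{\ell-2}$, the edges incident to $x$ go to $x+a$ for $a \in A_{\ell-1}$, and each such edge receives colour $a$ under $\phi$; the $\ell-1$ colours at $x$ are thus pairwise distinct, so $\phi$ is proper.

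Next I would suppose for contradiction that $x_0, x_1, \ldots, x_{\ell-1}$ is a rainbow $P_\ell$ in $(F_{\ell-1},\phi)$. Writing $a_i := x_{i+1} - x_i \in A_{\ell-1}$ for each $i \in \{0,1,\ldots,\ell-2\}$, the rainbow condition says the $a_i$'s are pairwise distinct. Since there are $\ell-1$ of them and $|A_{\ell-1}| = \ell-1$, we get $\{a_0, a_1, \ldots, a_{\ell-2}\} = A_{\ell-1}$. Telescoping,
\[
x_{\ell-1} - x_0 \;=\; \sum_{i=0}^{\ell-2} a_i \;=\; \sum_{a \in A_{\ell-1}} a \;=\; \vec{1} + \sum_{j=1}^{\ell-2} e_j \;=\; \vec{1} + \vec{1} \;=\; 0
\]
in $\mathbb{F}_2^{\ell-2}$, so $x_{\ell-1} = x_0$, contradicting the fact that a path has pairwise distinct vertices.

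There really is no obstacle here beyond recognising the right invariant: the argument is just ``rainbow $+$ palette size $= \ell-1$ forces using every direction, and the directions in $A_{\ell-1}$ sum to zero.'' The general-position remark in the paper ($\sum_{a\in S}a\neq 0$ for every proper nonempty subset $S$) is actually stronger than what we need to rule out a $P_\ell$, but it is exactly what would be needed if one wanted the related statement that no rainbow \emph{cycle} of length less than $\ell-1$ exists; for the $P_\ell$-freeness statement asked for here, only the single identity $\sum_{a \in A_{\ell-1}} a = 0$ is used.
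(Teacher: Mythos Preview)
Your argument is correct. Note, however, that the paper does not actually prove this theorem: it is quoted as a result of Johnston and Rombach and stated without proof, so there is nothing in the paper to compare against. What you have written is precisely the standard proof of this fact, and it relies on exactly the two properties of $A_{\ell-1}$ that the paper records immediately before the theorem statement, namely that $|A_{\ell-1}|=\ell-1$ and that $\sum_{a\in A_{\ell-1}}a=0$.
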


In order to prove Theorem~\ref{thm:caterpillar}, we have the following technical lemma.
For a walk $w=x_1,\ldots,x_k$ in a graph $G$ with colouring $\phi$, say $w$ is a \emph{rainbow trail} if for all $1\le i<j<k$, $\phi(x_i,x_{i+1})\neq\phi(x_j,x_{j+1})$.
\begin{lemma} \label{lem:path_con_holds}
Let $\ell \ge 4$.
Let $\phi$ be a rainbow $P_\ell$-free proper colouring of $F_{\ell-1}$.
Then the following hold.
\begin{enumerate}
    \item $\phi$ is an $(\ell-1)$-colouring.
    \item For any $x \in V(F_{\ell-1})$ and any $c \in \phi(E(F_{\ell-1}))$, $\phi$ contains a rainbow $(\ell-1)$-vertex path with endpoint $x$ that avoids the colour $c$.
    \item For any $x,y \in V(F_{\ell-1})$ with $x\neq y$, $\phi$ contains a rainbow $(\ell-1)$-vertex path with endpoint $x$ that avoids the vertex $y$.
    \item $F_{\ell-1}$ is properly rainbow $P_\ell$-saturated.
\end{enumerate}
\end{lemma}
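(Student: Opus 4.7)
My plan is to prove the four parts in order, treating parts (2) and (3) by a common greedy construction and deducing part (4) from the earlier parts. The key structural input throughout is that $F_{\ell-1}$ is $(\ell-1)$-regular, so in any proper colouring every vertex sees exactly $\ell-1$ distinct colours on its incident edges.

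For part (1), the lower bound $|\phi(E(F_{\ell-1}))| \ge \ell - 1$ is immediate from the degree. For the upper bound I would take a maximal rainbow path $P = x_0, x_1, \ldots, x_k$ in $(F_{\ell-1}, \phi)$. Since $\phi$ is rainbow $P_\ell$-free, $k \le \ell - 2$. By maximality, every edge at $x_0$ or $x_k$ to a vertex off $P$ repeats a colour of $P$; together with a count of chord edges at the endpoints (there are at most $k-1$ chords from each endpoint into $V(P)$), this gives a lower bound on $k$. To promote this local information to the claim $|\phi(E(F_{\ell-1}))| = \ell - 1$, I would argue that a hypothetical edge $e^* = uv$ of colour $c^* \notin \phi(E(P))$ could be used to reroute or translate $P$ via the vertex-transitive Cayley structure of $F_{\ell-1}$ into a rainbow $P_\ell$, contradicting the hypothesis.

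For parts (2) and (3), I will build a rainbow $P_{\ell-1}$ starting at $x$ by greedy extension. At each current endpoint $v$ the $\ell - 1$ distinct incident colours give many candidate edges; if we have already used $i \le \ell - 3$ colours then for part (2) we must additionally avoid the forbidden colour $c$, leaving at least one option, and for part (3) we must avoid $y$ and the already-visited vertices, again leaving room by a similar count. To ensure the greedy process does not dead-end before reaching $\ell - 1$ vertices, I will exploit the rigidity from part (1): with exactly $\ell - 1$ colours present and a very symmetric host graph, $\phi$ should admit rainbow copies of $P_{\ell-1}$ based at any vertex for every choice of omitted colour and every choice of avoided vertex, analogously to how, in the canonical colouring of Theorem~\ref{thm:path_hypercube}, such paths correspond to orderings of $\ell - 2$ of the $\ell - 1$ elements of $A_{\ell-1}$.

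Finally, for part (4), condition (1) of proper rainbow saturation is witnessed by $\phi$ itself. For condition (2), take any non-edge $e = xy$ of $F_{\ell-1}$ and any proper colouring $\phi'$ of $F_{\ell-1} + e$. If $\phi'|_{E(F_{\ell-1})}$ already contains a rainbow $P_\ell$ we are done; otherwise parts (1)-(3) apply to $\phi'|_{E(F_{\ell-1})}$. Setting $c = \phi'(xy)$, I want a rainbow $P_{\ell - 1}$ in $F_{\ell-1}$ starting at $x$ that avoids both the vertex $y$ and the colour $c$, so that prepending $xy$ produces a rainbow $P_\ell$. I expect this combined avoidance to be handled by the same greedy argument as in parts (2) and (3), with the rigidity from part (1) guaranteeing enough valid extensions at every step. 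This combined avoidance, together with the rerouting step in the proof of part (1), are the places where I expect the argument to require the most care.
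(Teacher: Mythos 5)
There is a genuine gap in your treatment of parts (2) and (3), which is the technical heart of the lemma. Your greedy extension argument counts available colours to pick the next edge, but it does not address the much harder issue of ensuring the greedily built walk never revisits a vertex. Naive counting fails here: by the time you have built a $j$-vertex partial path, up to $j-1$ of the $\ell-1$ neighbours of the current endpoint could already lie on the path, in addition to whatever you must avoid for colour reasons, and for $j$ close to $\ell-1$ this exhausts the degree. Invoking ``rigidity from part (1)'' or the symmetric host graph does not resolve this; the paper's proof of this step is a careful argument that only forbids repeated \emph{directions} within a short sliding window (so the object built is a priori a rainbow \emph{trail}, not a path), and then uses the fact that $A_{\ell-1}$ is in general position over $\mathbb{F}_2$ to show that any vertex repetition would force more repeated directions than the window constraint allows. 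That linear-algebraic non-revisiting argument is the missing idea, and without it the greedy step cannot be completed.

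Two further issues. For part (1), the ``reroute or translate $P$ via the vertex-transitive Cayley structure'' step is not an argument; the paper instead locates an edge $x_0x_1$ with $\phi(x_0)\neq\phi(x_1)$, extends a rainbow $P_{\ell-1}$ from $x_0,x_1,x_2$ using the claim above, and then extends backward past $x_0$, tracking the hypercube distance from $x_0$ to show the extra vertex is new. For part (4), you ask for a rainbow $P_{\ell-1}$ at $x$ avoiding \emph{both} the vertex $y$ and the colour $\phi'(xy)$, which is strictly more than parts (2)--(3) give you separately; but this combined avoidance is unnecessary. Since $F_{\ell-1}$ is $(\ell-1)$-regular and, by part (1), $\phi'|_{E(F_{\ell-1})}$ uses exactly $\ell-1$ colours, the vertex $x$ is already incident in $F_{\ell-1}$ to edges of every colour in $\phi'(E(F_{\ell-1}))$, so $\phi'(xy)$ is forced to be a fresh colour. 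Hence part (3) alone (avoid the vertex $y$) suffices, and the colour avoidance comes for free. Your plan misses this shortcut and in doing so commits to proving something you have no mechanism for.
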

\begin{proof}
Let $\ell\ge 4$, and let $\phi$ be a rainbow $P_\ell$-free proper colouring of $F=F_{\ell-1}$.
We will show that in every subgraph of $F$ with large minimum degree, every $P_3$ can be extended to a rainbow $P_{\ell-1}$, which in turn extends to a rainbow $P_\ell$ in $F$ unless $\phi$ has the desired properties.
\begin{claim} \label{clm:Pk_upper_path}
Let $F'$ be a subgraph of $F$ with $\delta:=\delta(F')\ge \ell-2$.
Let $x_0,x_1,x_2$ be a path in $F'$.
For each $j \in [\ell-2]$, let $m_j:= \min\{j-1,\delta-j+1\}$.
Then there exist $a_1,\ldots,a_{\ell-2} \in A_{\ell-1}$ and $x_3,\ldots,x_{\ell-2} \in V(F)$ such that the following hold for all $j \in [\ell-2]$:
\begin{enumerate}[label=(\roman*)]
    \item $x_j=x_{j-1}+a_j$.
    \item $a_j \notin \{a_{j-m_j},\ldots,a_{j-1}\}$.
    \item $p_j:= x_0,x_1,\ldots,x_j$ is a rainbow trail.
    \item $p_j=x_0,x_1,\ldots,x_j$ is a path.
\end{enumerate}
In particular, $p_{\ell-2}=x_0,x_1,\ldots,x_{\ell-2}$ is a rainbow path.
\end{claim}
\begin{proof}[Proof of Claim]
We will show (i)--(iii) by induction on $j$.
By setting $a_1=x_1-x_0$ and $a_2=x_2-x_1$, we are done when $j \le 2$ ($3$-vertex paths are trivially rainbow), so suppose $j \ge 3$.
By the induction hypothesis, let $a_1,\ldots,a_{j-1}$ and $x_3,\ldots,x_{j-1}$ be such that (i)--(iii) hold for all $j'\le j-1$.
Now $|\{a_{j-m_j},\ldots,a_{j-1}\}|= m_j \le \delta-j+1$ and $p_{j-2}$ receives $j-2$ colours, so because $d_{F'}(x_{j-1})-(\delta-j+1)-(j-2) \ge 1$, there exists $a_j \in A_{\ell-1}\setminus\{a_{j-m_j},\ldots,a_{j-1}\}$ such that by setting $x_j=x_{j-1}+a_j$, we have that $p_j=x_0,x_1,\ldots,x_j$ is rainbow.
Thus, by induction, (i)--(iii) hold for all $j$, so there exist $a_1,\ldots,a_{\ell-2}$ such that $p_{\ell-2}$ is rainbow and $a_i\notin \{a_{i-m_i},\ldots,a_{i-1}\}$ for all $i \in [\ell-2]$.

We will now prove (iv).
Suppose for contradiction that for some $0\le j< j' \le \ell-2$, $x_j = x_{j'}$.
Then 
\begin{equation} \label{eqn:cat_upper}
    r:=\sum_{i=j+1}^{j'}a_i =x_{j'}-x_j =0.
\end{equation}
Since $A_{\ell-1}$ is in general position, the set $\{a_{j+1},\ldots,a_{j'}\}$ is linearly independent (ignoring repeated elements), so (\ref{eqn:cat_upper}) is only possible if every element of $A_{\ell-1}$ appears an even number of times in the sum $r$.
Thus, $j'-j$ is even, and $[j+1,j']$ can be partitioned into $\frac{j'-j}{2}$ pairs $\{i,i'\}$ such that $a_i=a_{i'}$.
By (ii), for a fixed $i \in [j+1,j']$, there can exist $i' \in [j+1,i-1]$ such that $a_i=a_{i'}$ only if $i'<i-m_i$, i.e., $i'\le 2i-\delta-2$.
So
\begin{align*}
    |\{i \in [j+1,j']:\exists i' \in [j+1,i-1],\, a_i=a_{i'}\}| &\le |\{i \in [j+1,j']:j+1 \le 2i-\delta-2 \}| \\
    &= \left|\left\{i \in [j+1,j']:i\ge \frac{j+\delta+3}{2}\right\}\right| 
    \end{align*}
This is at most 
$$    j'-\frac{j+\delta+3}{2}+1 \le j'-\frac{j+(\ell-2)+3}{2}+1 \le j'-\frac{j+j'+3}{2}+1 = \frac{j'-j-1}{2}.$$
Thus, $[j+1,j']$ can be partitioned into at most $\frac{j'-j-1}{2}<\frac{j'-j}{2}$ pairs $\{i,i'\}$ such that $a_i=a_{i'}$, a contradiction.
So $x_0,\ldots,x_{\ell-2}$ are pairwise disjoint, proving (iv).
\end{proof}
We will now prove conditions (1)--(4) from the lemma statement.

(1)
Suppose $\phi$ is not an $(\ell-1)$-colouring.
For each $x \in V(F)$, define $\phi(x)$ to be the set of colours of edges incident with $x$.
There exists an edge $x_0x_1 \in E(F)$ such that $\phi(x_0)\neq \phi(x_1)$: otherwise, $\phi(u)=\phi(v)$ for all $u,v \in V(F)$ since $F$ is connected, and so $\phi$ is an $(\ell-1)$-colouring.
In particular, there exists $x_2 \in N_F(x_1)$ such that $\phi(x_1,x_2) \notin \phi(x_0)$ (and thus $x_2\neq x_0$).
Let $a_1,\ldots,a_{\ell-2}$ and $x_3,\ldots,x_{\ell-2}$ be as in Claim~\ref{clm:Pk_upper_path} for $F'=F$, and note that $p_{\ell-2}=x_0,x_1,\ldots,x_{\ell-2}$ is a rainbow path.

Let $\ca{B}$ be an $(\ell-2)$-subset of $A_{\ell-1}$ that contains $\{a_1,\ldots,a_{\ell-2}\}$.
Then since $A_{\ell-1}$ is in general position, $\ca{B}$ is a basis for $\bb{F}_2^{\ell-2}$.
Let $s$ be the unique element of $A_{\ell-1}\setminus \ca{B}$.
Since $x_2,x_3,\ldots,x_{\ell-2}$ receives $\ell-4$ colours and $|\ca{B}\setminus\{a_1\}|-(\ell-4)\ge 1$, there exists $a_0 \in \ca{B}\setminus\{a_1\}$ such that, setting $x_{-1} := x_0+a_0$, we have that $p:= x_{-1},x_0,\ldots,x_{\ell-2}$ is a rainbow trail (note that $\phi(x_{-1},x_0)\neq \phi(x_1,x_2)$ since $\phi(x_1,x_2) \notin \phi(x_0)$).
Thus, it now suffices to show that $p$ is a path, i.e., $x_{-1} \notin \{x_0,\ldots,x_{\ell-2}\}$.

For each $j \in [-1,\ell-2]$, let        
\[x_j -x_0 = \sum_{b\in \ca{B}}\alpha_{j,b}b\]
for some $\{\alpha_{j,b}\}_{b \in \ca{B}}$ in $\bb{F}_2$, and let 
\[f(j):= |\{b\in \ca{B}:\alpha_{j,b}\neq 0\}|.\]
$f(j)$ can be interpreted as the distance between $x_j$ and $x_0$ in the copy of $Q_{\ell-2}$ in $F$ generated by the directions in $\ca{B}$.
Note that $f(0) =0$, and for each $j \ge 1$, $f(j) \in \{f(j-1)\pm 1\}$.
For all $1\le j \le \lceil \ell/2\rceil$, letting $m_j$ be as in Claim~\ref{clm:Pk_upper_path} with $F'=F$, we have
\[ j-m_j \le j-(\delta(F)-j+1) = 2j-\ell \le 2\left\lceil\frac{\ell}{2}\right\rceil -\ell \le 1. \]
Thus, $a_j \notin \{a_{j-m_j},\dots,a_{j-1}\}=\{a_1,\ldots,a_{j-1}\}$ by Claim~\ref{clm:Pk_upper_path}(ii), so $f(j) = f(j-1)+1$.
Thus, by iteratively applying this equality, $f(j) = j$ for all $1\le j \le \lceil \ell/2 \rceil$.
For each $\lceil \ell/2 \rceil < j \le \ell-2$, $f(j) \ge f(j-1)-1$, so by iteratively applying this bound, we obtain 
\[f(j) \ge f\left(\left\lceil \frac{\ell}{2} \right\rceil\right)-\left(j-\left\lceil\frac{\ell}{2}\right\rceil\right) = 2\left\lceil\frac{\ell}{2}\right\rceil-j \ge 2\left\lceil\frac{\ell}{2}\right\rceil-(\ell-2) >1.\]
Now $a_0 \in \ca{B}$ by definition, and $x_{-1}=x_0+a_0$, so $f(-1) = 1$.
The only element $j \in [0,\ell-2]$ with $f(j) = 1$ is $j=1$, so in particular, $x_{-1}\neq x_j$ for all $j \in [0,\ell-2]\setminus\{1\}$.
But $a_0\neq a_1$ by definition, so $x_{-1}\neq x_1$.
Therefore, $x_{-1}\notin \{x_0,\ldots,x_{\ell-2}\}$, so $p$ is a rainbow path on $\ell$ vertices.
This is a contradiction, as $\phi$ is rainbow $P_\ell$-free.
Therefore, $\phi$ is a $(\ell-1)$-colouring.

(2)
Let $x=x_0 \in V(F)$ and $c \in \phi(E(F))$.
Let $E_c$ be the colour class of $c$ in $F$, and let $F':= F-E_c$.
Note that $\delta(F')\ge \delta(F)-1=\ell-2$.
Let $x_0,x_1,x_2$ be a path in $F'$ with endpoint $x_0$, which exists since $\delta(F')\ge \ell-2\ge 2$.
By Claim~\ref{clm:Pk_upper_path}, $p_{\ell-2}=x_0,x_1,\ldots,x_{\ell-2}$ is a rainbow $(\ell-1)$-vertex path in $F'$ with endpoint $x$, as desired.

(3)
Let $x=x_0 \in V(F)$ and $y \in V(F)$.
Let $F':= F-y$.
Note that $\delta(F') = \delta(F)-1=\ell-2$.
Let $x_0,x_1,x_2$ be a path in $F'$ with endpoint $x_0$, which exists since $\delta(F')= \ell-2\ge 2$.
By Claim~\ref{clm:Pk_upper_path}, $p_{\ell-2}=x_0,x_1,\ldots,x_{\ell-2}$ is a rainbow $(\ell-1)$-vertex path in $F'$ with endpoint $x$, as desired.

(4)
$F$ has a rainbow $P_\ell$-free proper colouring by Theorem~\ref{thm:path_hypercube}.
Suppose for contradiction that there exist $xy \in E(\overline{F})$ and a rainbow $P_\ell$-free proper colouring $\phi$ of $F+xy$.
Let $F':=F-y = (F'+xy)-y$.
Note that $\delta(F')=\delta(F)-1=\ell-2$.
Let $x_0,x_1,x_2$ be a path in $F'$ with endpoint $x_0=x$, which exists since $\delta(F')\ge \ell-2 \ge 2$.
By Claim~\ref{clm:Pk_upper_path}, $p_{\ell-2}=x_0,x_1,\ldots,x_{\ell-2}$ is a rainbow $(\ell-1)$-vertex path in $F'$.

Now $\phi|_{E(F)}$ is a rainbow $P_\ell$-free proper colouring, so by (1), $\phi|_{E(F)}$ is an $(\ell-1)$-colouring.
Thus, $\phi(x,y) \notin \phi(E(F))$, since in $F$, $x$ is already incident to edges of every colour in $\phi(E(F))$.
So $\phi(x,y)$ is not present in the colours of $p_{\ell-2}$, which are contained in $\phi(E(F'))\subseteq \phi(E(F))$.
Since $p_{\ell-2}$ is in $F'=F-y$, we have $y \notin \{x_0,\ldots,x_{\ell-2}\}$, so $y,p_{\ell-2}=y,x_0,x_1,\ldots,x_{\ell-2}$ is a rainbow $\ell$-vertex path in $F+xy$.
This is a contradiction.
Therefore, $F$ is properly rainbow $P_\ell$-saturated, proving (4).

This completes the proof of Lemma~\ref{lem:path_con_holds}.
\end{proof}

We are now ready to prove Theorem~\ref{thm:caterpillar}.
\begin{proof}[Proof of Theorem~\ref{thm:caterpillar}]
Let $F=F_{\ell-1}$, and let $G$ be an $n$-vertex graph obtained from $F$ by adding at least $k$ pendant edges to each vertex of $F$.
Since any path in $G$ has at most two leaves, it must contain at most $2$ edges not in $F$.
Thus, since there exists a rainbow $P_{\ell}$-free proper colouring of $F$ by Theorem~\ref{thm:path_hypercube}, any extension of this colouring to a proper colouring of $G$ is rainbow $P_{\ell+2}$-free and thus rainbow $T_{k,\ell}$-free since $T_{k,\ell} \supset P_{\ell+2}$.

Take any $uv \in E(\overline{G})$, and let $\phi$ be a proper colouring of $G+uv$.
\begin{claim} \label{clm:CP_rainbow_path}
There exists a rainbow path $x_1\bc \ldots\bc x_{\ell+1}$ in $(G+uv,\phi)$ such that $x_1\bc \ldots\bc x_{\ell-1} \in V(F)$.
\end{claim}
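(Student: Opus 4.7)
The plan is to case-split on whether $\phi|_F$ (the restriction of $\phi$ to the folded cube $F=F_{\ell-1}$) contains a rainbow $P_\ell$. Suppose first that it does, and let $y_1,\ldots,y_\ell$ be such a path. Since $y_\ell \in V(F)$ has at least $k \ge \ell+2$ pendant neighbours in $G$ whose pendant edges carry pairwise distinct colours (by properness), and at most $\ell-2$ of these colours can coincide with colours of path edges not incident to $y_\ell$, some pendant neighbour $z$ of $y_\ell$ satisfies $\phi(y_\ell z) \ne \phi(y_iy_{i+1})$ for all $i \in [\ell-1]$. Then $y_1,\ldots,y_\ell,z$ is a rainbow $P_{\ell+1}$ whose first $\ell-1$ vertices lie in $V(F)$.

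Otherwise, $\phi|_F$ is a rainbow $P_\ell$-free proper colouring of $F$, so Lemma~\ref{lem:path_con_holds} applies. By part (1), $\phi|_F$ uses exactly $\ell-1$ colours; combined with the fact that every vertex of $F$ has degree $\ell-1$, each vertex of $F$ is incident to an edge of every colour in this palette. I now case-split on $u,v$. If $u,v \in V(F)$, then $\phi|_{F+uv}$ is a proper colouring of $F+uv$, and Lemma~\ref{lem:path_con_holds}(4) produces a rainbow $P_\ell$ in $F+uv$, which I extend by a pendant exactly as in the previous paragraph. If instead $u \in V(F)$ while $v$ is a pendant with $F$-neighbour $w$, the key observation is that $\phi(uv), \phi(vw) \notin \phi(E(F))$: both $u$ and $w$ are already incident in $F$ to edges of every palette colour, so properness forces any additional incident edge at these vertices to carry a new colour. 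Lemma~\ref{lem:path_con_holds}(3), applied with $x=u$ and $y=w$, produces a rainbow $(\ell-1)$-vertex path $p_1,\ldots,p_{\ell-1}=u$ in $F$ avoiding $w$; then $p_1,\ldots,p_{\ell-1},v,w$ is a rainbow $P_{\ell+1}$ with the desired structure, since $\phi(uv)$ and $\phi(vw)$ are distinct from one another and from every palette colour.

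The remaining case is $u,v$ both pendant, with $F$-neighbours $w_u,w_v$. By the same argument, $\phi(w_uu), \phi(w_vv) \notin \phi(E(F))$, while $\phi(uv)$ may or may not lie in the palette. Applying Lemma~\ref{lem:path_con_holds}(2) with $x=w_u$ and $c=\phi(uv)$ (or with $c$ an arbitrary colour, if $\phi(uv)$ is outside the palette) yields a rainbow $(\ell-1)$-vertex path $p_1,\ldots,p_{\ell-1}=w_u$ in $F$ avoiding $\phi(uv)$, and then $p_1,\ldots,p_{\ell-1},u,v$ is the desired rainbow $P_{\ell+1}$. The main conceptual point throughout is the realisation that any pendant edge incident to a vertex of $V(F)$ is forced to use a colour outside $\phi|_F$'s palette: this decouples pendant-edge colours from the colouring of $F$ and lets Lemma~\ref{lem:path_con_holds} be invoked in its existing form, rather than requiring a strengthened version that simultaneously avoids a vertex and a colour (which the folded-cube/Claim~\ref{clm:Pk_upper_path} degree-budget would not obviously support).
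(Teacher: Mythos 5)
Your proof is correct and follows essentially the same strategy as the paper: case-split on whether $\phi|_F$ already contains a rainbow $P_\ell$, and otherwise invoke Lemma~\ref{lem:path_con_holds}(1)--(4) according to whether $u,v$ lie in $V(F)$ or among the pendants, using the observation that $(\ell-1)$-regularity plus the $(\ell-1)$-colouring conclusion of part (1) force every pendant edge at a vertex of $F$ (and the added edge $uv$, when it touches $V(F)$) to carry a colour outside the palette $\phi(E(F))$. The paper's argument is identical in structure and in the key palette observation; your write-up is merely a little more explicit about the case $\{u,v\}\subseteq V(F)$ and about what to do when $\phi(uv)$ happens to lie outside the palette.
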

\begin{proof}[Proof of Claim]
We will show that $\phi$ contains a rainbow copy of $P_{\ell+1}$.
If there exists a rainbow path $P=x_1,\ldots,x_{\ell}$ in $(F,\phi|_{E(F)})$, then since there are $k\ge \ell$ pendant edges incident with each of $x_1$ and $x_{\ell}$ in $G$, there exists $x_{\ell+1} \in N_G(x_{\ell})$ such that $x_{\ell}x_{\ell+1}$ does not receive a colour in the path $x_1,\ldots,x_{\ell-1}$, so $P,x_{\ell+1}$ is a rainbow $(\ell+1)$-vertex path.
Thus, we can assume that $\phi|_{E(F)}$ is rainbow $P_{\ell}$-free.
In particular, since $F$ is properly rainbow $P_{\ell}$-saturated by Lemma~\ref{lem:path_con_holds}(4), we can assume that $\{u,v\}\not\subseteq V(F)$.

\textbf{Case 1:}
Suppose $u,v \notin V(F)$.
Let $x$ and $y$ be the neighbours of $u$ and $v$, respectively, in $F$.
Then by Lemma~\ref{lem:path_con_holds}(2), $\phi|_{E(F)}$ contains a rainbow path $P=x_1,\ldots,x_{\ell-1}$ with $x_{\ell-1}=x$ that avoids the colour $\phi(vu)$.
By Lemma~\ref{lem:path_con_holds}(1), $\phi|_{E(F)}$ is an $(\ell-1)$-colouring, and $F$ is $(\ell-1)$-regular, so $ux$ does not receive a colour in $\phi(E(F))$.
Therefore, we have that $P,u,v$ is a rainbow $(\ell+2)$-vertex path in $G+uv$ with $V(P) \subseteq V(F)$.

\textbf{Case 2:}
Suppose $v \in V(F)$.
Let $x$ be the neighbour of $u$ in $F$.
Then by Lemma~\ref{lem:path_con_holds}(3), $\phi|_{E(F)}$ contains a rainbow path $P=x_1,\ldots,x_{\ell-1}$ with $x_{\ell-1}=v$ that avoids the vertex $x$.
By Lemma~\ref{lem:path_con_holds}(1), $\phi|_{E(F)}$ is an $(\ell-1)$-colouring, and $F$ is $(\ell-1)$-regular, so neither of $xu,uv$ receives a colour in $\phi(E(F))$.
Therefore, $P,u,x$ is a rainbow $(\ell+1)$-vertex path in $G+uv$ with $V(P)\subseteq V(F)$.

In all cases, there exists a rainbow path $x_1\bc \ldots\bc x_{\ell+1}$ in $(G+uv,\phi)$ such that $x_1\bc \ldots\bc x_{\ell-1} \in V(F)$.
This completes the proof of Claim~\ref{clm:CP_rainbow_path}.
\end{proof}
For each $i \in [\ell-1]$, let $s_i$ be the number of leaf neighbours of $v_i$ in $T_{k,\ell}$.
Starting with the path $P=x_1,\ldots,x_{\ell+1}$ from Claim~\ref{clm:CP_rainbow_path} as our central path, we will add $s_i$ pendant edges incident to each $x_i$ for $i \in [\ell-1]$ to obtain a rainbow copy $T$ of $T_{k,\ell}$.
Suppose that for some $i \in [\ell-1]$, fewer than $s_i$ pendant edges incident to $x_i$ have been added.
Note that $T_{k,\ell}$ has $k-1$ total edges, so at most $k-2$ edges of $T$ have been defined, at least one of which is in $P$ and incident to $x_i$, so there are at most $k-3$ edges incident to $x_i$ whose addition would repeat a colour already present.
Now, $x_i$ has at least $k-2$ neighbours $y$ in $V(G)\setminus (V(F)\cup\{x_{\ell},x_{\ell+1}\})$, so by adding $x_iy$ to $T$ so that $x_iy$ does not repeat a colour already present and repeating this process, we obtain a rainbow copy of $T_{k,\ell}$.

Therefore, $G$ is properly rainbow $T_{k,\ell}$-saturated, so
\[ \prsat(n,T_{k,\ell}) \le |E(G)| = n-2^{\ell-2}+\frac{2^{\ell-2}(\ell-1)}{2} = n+(\ell-3)2^{\ell-3}, \]
as desired.
\end{proof}
\section{Proof of Theorem~\ref{thm:cat_converse}}\label{sec:catcon}
\
The goal of this section is to prove Theorem~\ref{thm:cat_converse}, restated here for convenience.

\catcon*

We require the following standard fact about spanning trees (which can be proved by induction). 
\begin{lemma} \label{lem:spanning_tree_dist}
Let $G$ be a connected graph, and let $x \in V(G)$.
Then there exists a spanning tree $T$ of $G$ such that for all $v \in V(G)$, $d_T(x,v)=d_G(x,v)$.
\end{lemma}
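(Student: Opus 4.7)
The plan is to construct the required spanning tree $T$ directly by a breadth-first search from $x$. Partition the vertex set into distance layers $L_i := \{v \in V(G) : d_G(x,v) = i\}$ for $i \ge 0$. Since $G$ is connected and finite, these layers cover $V(G)$, with $L_0 = \{x\}$. The basic observation is that every $v \in L_i$ with $i \ge 1$ has at least one neighbour in $L_{i-1}$: if $P$ is any shortest $x$-$v$ path in $G$, then its penultimate vertex lies in $L_{i-1}$ and is adjacent to $v$.

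For each $v \in V(G)\setminus\{x\}$, I would choose some such neighbour $p(v) \in L_{i-1}$, and define $T$ as the spanning subgraph of $G$ with edge set $\{v\,p(v) : v \in V(G)\setminus\{x\}\}$. Two verifications are then needed. First, $T$ is a spanning tree: by construction $|E(T)| = |V(G)| - 1$, and following the chain $v, p(v), p(p(v)), \ldots$ reaches $x$ in $i$ steps whenever $v \in L_i$, so $T$ is connected. A connected graph on $n$ vertices with $n-1$ edges is a tree. Second, distances are preserved: the same parent chain shows $d_T(x,v) \le i$ for all $v \in L_i$, while $T \subseteq G$ gives $d_T(x,v) \ge d_G(x,v) = i$, so equality holds.

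If a formal inductive presentation is preferred, one inducts on $i$: assuming inductively that a tree $T_{i-1}$ has been built on $L_0 \cup \cdots \cup L_{i-1}$ with $d_{T_{i-1}}(x,v) = d_G(x,v)$ for all $v$ in that set, attach each vertex of $L_i$ to its chosen parent in $L_{i-1}$; acyclicity is maintained because each new edge has exactly one endpoint in the previously-built tree, and the distance from $x$ to any new vertex becomes exactly $i$.

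There is no real obstacle; the only subtle point is verifying that a parent in $L_{i-1}$ exists for each $v \in L_i$, which is immediate from the definition of distance layers. I would present the argument very briefly, as the paper's own remark ``which can be proved by induction'' suggests.
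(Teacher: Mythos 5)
Your proof is correct and matches the paper's (implicit) approach: the paper only remarks that the lemma "can be proved by induction," and your BFS-layer construction, together with the inductive reformulation you sketch at the end, is exactly the standard argument being alluded to.
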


For a graph $H$, a set $S\subseteq V(H)$ and a vertex $v\in V(H)$, let $d_H(v,S):=\min_{u \in S}d_H(v,u)$.

\begin{proof}[Proof of Theorem~\ref{thm:cat_converse}]

Let $n \ge 4$, and let $G \in \Prsat(n,T)$.
Let $D:=\{v\in V(G):d_G(v)\ge 3\}$.
For all $S\subseteq V(G)$, $v \in V(G)$, and $d \in \bb{N}$, let $B_d[S]:=\{u \in V(G):d_G(u,S)\le d\}$ and $B_d[v]:=B_d[\{v\}]$.
\begin{claim} \label{clm:cat_converse_not_far}
$|V(G)\setminus B_r[D]| \le 3$.
\end{claim}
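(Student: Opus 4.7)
The plan is to assume, for contradiction, that $|V(G)\setminus B_r[D]|\ge 4$, and then to construct a non-edge $e=uv$ of $G$ together with a proper edge-colouring of $G+e$ that contains no rainbow $T$, contradicting the fact that $G\in\Prsat(n,T)$.

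First I would unpack the local structure around $U:=V(G)\setminus B_r[D]$. Every vertex of $V(G)\setminus D$ has degree at most $2$, so each connected component of $G[V(G)\setminus D]$ is a path or a cycle. If $v\in U$, then by definition no vertex of $D$ lies in $B_r[v]$, so the entire ball $B_r[v]$ is made up of degree-$\le 2$ vertices; in particular, $v$ sits inside a long ``bare'' segment of $G$ of radius at least $r$. I would use this to classify the components that meet $U$: either (a) a cycle with no neighbours in $D$ (hence a component of $G$), or (b) a path whose two endpoints are either leaves of $G$ or have a single neighbour in $D$, and whose vertices of $U$ sit at distance $>r$ from each endpoint. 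Assuming $|U|\ge 4$, a short pigeonhole argument will then give two vertices $u,v\in U$ either lying in a common such component, or lying in two different components in a controllable way (this step is where the $4$ in the statement is actually used).

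Next I would exploit a rainbow-$T$-free proper colouring $\phi$ of $G$ (which exists by saturation) and try to modify it to a proper colouring $\phi'$ of $G+uv$. The key structural fact about $T$ is that, under the hypothesis that no degree-$2$ vertex of $T$ has a leaf neighbour, the endpoints $w_0$ and $w_t$ of the chosen bare path have degree $\ge 3$ in $T$ when $t\ge 2$; so any embedding of $T$ in a host graph must send $w_0,w_t$ into the set of vertices of degree $\ge 3$ of the host, i.e.\ into $D$ in our graph $G+uv$ (using $r=\max\{t,2\}$). Therefore any rainbow copy $T'$ of $T$ in $(G+uv,\phi')$ that uses the added edge $uv$ must contain a path of length $r$ with endpoints in $D$ passing through $uv$. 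Since $d_G(u,D),d_G(v,D)>r$, such a path is severely constrained: it is forced to run along the bare segments around $u$ and $v$ that we already understand. Using the freedom to recolour a single bare path (since this subgraph is $2$-edge-colourable in many ways, and the colouring of $G$ restricted to it can be rewritten without affecting the rest of $G$), I would assign $\phi'(uv)$ and adjust colours on the bare segment(s) containing $u$ and $v$ so that every path of length $r$ through $uv$ with endpoints in $D$ repeats a colour. Finally, since any rainbow copy of $T$ in $G+uv$ either avoids $uv$ (and hence already lies in $G$, contradicting rainbow-$T$-freeness of $\phi$) or contains such a forced repetition, $(G+uv,\phi')$ contains no rainbow $T$, contradicting saturation.

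The main obstacle I expect is the case analysis needed to pick $u$ and $v$ and the correct local recolouring: one has to treat separately the situations where the bare segments around $u$ and $v$ are (i) in the same path component of $G-D$, (ii) in two different path components, or (iii) in a cycle component of $G$. The edge cases $r=2$ (when $t\le 1$, so the bare-path hypothesis on $T$ is vacuous) and the cases where endpoints of path components are leaves of $G$ rather than neighbours of $D$ will need a slightly different, but easier, argument, essentially because in those cases adding an edge within the bare structure creates a graph of maximum degree at most $3$ in the relevant region, giving even more room to force a colour repetition. The use of Lemma~\ref{lem:spanning_tree_dist} will come in only implicitly, to ensure we can pick representatives of distances from $D$ along actual geodesic paths when choosing which vertices of the bare segments to use as $u$ and $v$.
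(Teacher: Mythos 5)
Your proposal takes a genuinely different and unnecessarily complicated route. The paper's proof never touches colourings at all: it uses only the fact that $G$ is (semi-)saturated, so that for a suitable non-edge $xy$ with $x,y\in V(G)\setminus B_r[D]$, the graph $G+xy$ must contain a \emph{copy} of $T$ containing $xy$, and then derives a purely structural contradiction. Concretely, since each element of a $4$-subset $A\subseteq V(G)\setminus B_r[D]$ has degree at most $2$, $G[A]$ is not a clique, giving a non-edge $xy$. In the forced copy $T'$ of $T$, one picks a vertex $z\notin\{x,y\}$ of degree $\ge 3$ in $T'$ at minimal $T'$-distance from $\{x,y\}$ (such a $z$ exists, because a $T$ all of whose degree-$\ge3$ vertices lie on a single edge would contradict the hypotheses on $T$). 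Then $z\in D$, so $d_G(z,\{x,y\})>r\ge t$, and by minimality the internal vertices of the $z$-$\{x,y\}$ path in $T'$ have degree $2$ in $T'$; this yields a bare path in $T'\cong T$ longer than $t$, contradicting maximality of the path $w_0,\dots,w_t$.

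The gap in your plan is that you work much too hard: you set out to recolour $\phi$ so as to kill rainbow copies of $T$ through $uv$, but no copy of $T$ through $uv$ exists at all, coloured or not, and this is exactly what has to be shown. You do half-notice the core fact --- that the images of $w_0,w_t$ must land on high-degree vertices, which are far from $u,v$ --- but then detour into a recolouring and case analysis (path components, cycle components, leaves vs.\ $D$-neighbours) that is never pinned down and is in any case unnecessary. In particular, your key step, ``adjust colours on the bare segments so that every path of length $r$ through $uv$ with endpoints in $D$ repeats a colour,'' is vacuous because $d_G(u,D),d_G(v,D)>r$ already precludes such a path; the right move is to push that observation to its structural conclusion directly, without a colouring. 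You should also be a little careful with the claim that all degree-$\ge3$ vertices of a copy $T'\subseteq G+uv$ lie in $D$: this can fail at $u$ or $v$ themselves (their degree rises by one), which is exactly why the paper's proof selects a high-degree vertex of $T'$ that is not in $\{x,y\}$.
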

\begin{proof}[Proof of Claim]
Suppose otherwise, and let $A$ be a $4$-subset of $V(G)\setminus B_r[D]$.
Since elements of $A$ each have degree at most $2$, $G[A]$ is not a clique, so let $xy \in E(\overline{G[A]})$.
Since $G \in \Prsat(n,T)$, let $T'$ be a copy of $T$ in $G+xy$ that contains the edge $xy$.
Let $z\in V(T')\setminus\{x,y\}$ with $d_{T'}(z)\ge 3$ such that $d_{T'}(z,\{x,y\})$ is minimal.
Note that $z$ must exist, since otherwise $xy$ is an edge in $T'$ containing all vertices of degree $\ge 3$ in $T'$, which violates the hypothesis on $T$.
Since $x,y \notin B_r[D]$, we have $d_{G+xy}(z,\{x,y\}) = d_G(z,\{x,y\}) > r$, and every vertex in $B_r[\{x,y\}]$ has degree $2$.
So the shortest $z$-$\{x,y\}$ path in $T'$ has length greater than $r$ and has degree-$2$ internal vertices, which contradicts the definition of $r$.
This proves the claim.
\end{proof}

\begin{claim} \label{clm:one_leaf}
Every vertex in $G$ has at most one leaf neighbour.
\end{claim}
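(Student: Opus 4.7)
The plan is to argue by contradiction: suppose some $v \in V(G)$ has two leaf neighbours $x,y$, and fix a proper rainbow-$T$-free colouring $\phi$ of $G$ (which exists since $G \in \Prsat(n,T)$). The goal is to extend $\phi$ to a proper rainbow-$T$-free colouring of $G+xy$, contradicting saturation. I would extend by assigning to $xy$ some colour distinct from $\phi(vx)$ and $\phi(vy)$, which is possible since the palette is unbounded and $x,y$ have no other neighbours in $G$.

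The key structural observation is that no copy of $T$ in $G+xy$ can actually use the edge $xy$. Indeed, let $T'$ be such a copy. Since $x$ and $y$ are leaves in $G$, in $G+xy$ we have $N(x)=\{v,y\}$ and $N(y)=\{v,x\}$, so $\deg_{T'}(x),\deg_{T'}(y)\le 2$. If both equal $1$, then $T'=K_2$, which is impossible since $T$ has diameter $\ge 4$. If both equal $2$, then $T'$ contains the triangle $vxy$, contradicting $T'$ being a tree. So, up to relabelling, $\deg_{T'}(x)=1$ and $\deg_{T'}(y)=2$, meaning $y$ has degree $2$ in $T'$ with the leaf neighbour $x$. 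Transporting this back to $T$ via the isomorphism $T'\cong T$ produces a degree-$2$ vertex of $T$ with a leaf neighbour, directly contradicting the hypothesis of Theorem~\ref{thm:cat_converse}.

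Consequently, every copy of $T$ in $G+xy$ is already contained in $G$, so the extension of $\phi$ to $G+xy$ contains no new rainbow copy of $T$, and any old copy was non-rainbow by choice of $\phi$. This contradicts $G$ being properly rainbow $T$-saturated, proving the claim. The main (and essentially only) subtlety is the case analysis on $\deg_{T'}(x),\deg_{T'}(y)$; the diameter-$\ge 4$ hypothesis and the ``no degree-$2$ vertex has a leaf neighbour'' hypothesis are used precisely to rule out the two non-trivial cases.
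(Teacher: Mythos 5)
Your proposal is correct and follows essentially the same approach as the paper's proof: the core structural observation in both is that any copy of $T$ in $G+xy$ through the new edge would force a degree-$2$ vertex with a leaf neighbour (or be too small), contradicting the hypothesis on $T$. The only cosmetic difference is that the paper invokes the semi-saturation consequence directly (``let $T'$ be a copy of $T$ containing $yz$'') and derives a contradiction from $T'$, whereas you reverse the chain by showing no such $T'$ exists and then re-deriving the same contradiction via a colouring extension; you also make the $\deg_{T'}(x)=\deg_{T'}(y)=1$ case explicit, which the paper quietly skips.
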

\begin{proof}[Proof of Claim]
Suppose there exists $x\in V(G)$ with two leaf neighbours $y$ and $z$.
Let $T'$ be a copy of $T$ in $G+yz$ containing $yz$.
Since $T$ is a tree, it does not contain the cycle $x,y,z,x$, so one of $y$ or $z$ is a leaf in $T'$ and the other has degree $2$ in $T'$.
This contradicts the hypothesis.
Therefore, the claim holds.
\end{proof}

\begin{claim} \label{clm:no_leaf}
At most one vertex of degree $2$ in $G$ has a leaf neighbour.
\end{claim}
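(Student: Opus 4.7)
The plan is to argue by contradiction: suppose $x_1, x_2 \in V(G)$ are distinct degree-$2$ vertices with leaf neighbours $y_1, y_2$ respectively, and let $w_i$ denote the other $G$-neighbour of $x_i$ (necessarily $y_1 \neq y_2$, since each $y_i$'s only $G$-neighbour is $x_i$). I will show that $G + y_1y_2$ admits a proper, rainbow-$T$-free edge colouring, contradicting $G \in \Prsat(n,T)$.

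The first step is a structural analysis: in $G + y_1y_2$, any copy $T'$ of $T$ that contains the edge $y_1y_2$ must in fact contain the entire path $w_1\,x_1\,y_1\,y_2\,x_2\,w_2$. Indeed, $y_1$ has $G+y_1y_2$-degree $2$, hence $T'$-degree at most $2$. If $y_1$ were a leaf of $T'$, its unique $T'$-neighbour would be $y_2$; then the hypothesis on $T$ (no degree-$2$ vertex has a leaf neighbour) together with $y_2$ having $G+y_1y_2$-degree at most $2$ would force $y_2$ also to be a leaf of $T'$, making $T'=K_2$ and contradicting $T$ having diameter at least $4$. Hence both $y_1$ and $y_2$ have $T'$-degree exactly $2$, forcing $x_1y_1, y_1y_2, y_2x_2 \in E(T')$. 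Applying the hypothesis again at $y_1$ and $y_2$ rules out $x_1, x_2$ being leaves of $T'$, and combined with their $G$-degree bound this gives $w_1x_1, x_2w_2 \in E(T')$. If any of $w_1, w_2$ coincides with another vertex of this putative path, then $T'$ would contain a cycle and no such $T'$ exists, which only makes the colouring step easier.

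For the colouring step, let $\phi$ be a proper rainbow-$T$-free colouring of $G$, which exists by saturation. I will extend $\phi$ to a proper colouring $\psi$ of $G+y_1y_2$ in which the $5$-edge path $P = w_1\,x_1\,y_1\,y_2\,x_2\,w_2$ always contains a repeated colour, forcing every $T'$ as above to be non-rainbow. If $\phi(w_1x_1) = \phi(x_2y_2)$, then two edges of $P$ already share a colour regardless of $\psi(y_1y_2)$, so any legal proper choice for $\psi(y_1y_2)$ works. Otherwise, set $\psi(y_1y_2) := \phi(w_1x_1)$; this is proper because $\phi(w_1x_1) \neq \phi(x_1y_1)$ (adjacent at $x_1$) and $\phi(w_1x_1) \neq \phi(x_2y_2)$ (by the case assumption), and the two edges $w_1x_1$ and $y_1y_2$ of $P$ now share a colour. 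In either case $(G+y_1y_2, \psi)$ contains no rainbow copy of $T$, since any rainbow copy avoiding $y_1y_2$ would live in $(G,\phi)$, contradicting saturation.

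The main obstacle is the structural claim that every $T'$ through $y_1y_2$ must realise the full $6$-vertex path $P$; once it is in hand, the colouring step reduces to a short two-case analysis exploiting the leaf-status of $y_1,y_2$ and the freedom in recolouring the single new edge.
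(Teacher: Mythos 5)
Your proof is correct and follows essentially the same approach as the paper: add an edge between the two leaves, observe that any copy of $T$ through that new edge is forced to contain the full six-vertex path $w_1\,x_1\,y_1\,y_2\,x_2\,w_2$, and then choose a colour for the new edge so this path can never be rainbow. Your case split (compare $\phi(w_1x_1)$ with $\phi(x_2y_2)$ only, rather than the paper's symmetric pair of equalities) is a marginally cleaner way to organise the same colouring step, and you spell out the structural claim that the paper leaves implicit.
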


\begin{proof}[Proof of Claim]
Suppose for contradiction that there exist $x,y,z,x',y',z' \in V(G)$ such that $N(x) = \{y,z\}$, $N(x')=\{y',z'\}$, $d(y)=d(y') = 1$, $x\neq x'$, $y\neq z$, and $y' \neq z'$.
Let $\phi$ be a rainbow $T$-free proper colouring of $G$.

\begin{figure}
    \centering
    \includegraphics{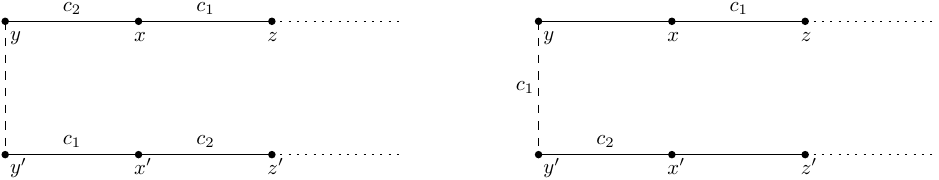}
    \caption{Cases from Claim~\ref{clm:no_leaf}. In both cases, the edge $yy'$ can be added to $(G,\phi)$ in some colour such that no rainbow copy of $T$ is created.}
    \label{fig:cat_converse_no_leaf}
\end{figure}

If $\phi(xz)= \phi(x'y')$ and $\phi(x'z') = \phi(xy)$, extend $\phi$ to a proper colouring of $G+yy'$ by assigning $yy'$ any valid colour.
Then since $T$ has diameter $\ge 4$ and every vertex of degree $2$ in $T$ has no leaf neighbours, any copy of $T$ in $G+yy'$ containing $yy'$ has a repeated colour (see Figure~\ref{fig:cat_converse_no_leaf}, left).
This contradicts $G\in \Prsat(n,T)$.

Otherwise, we can assume $\phi(xz) \neq \phi(x'y')$.
Extend $\phi$ to a proper colouring of $G+yy'$ by setting $\phi(yy'):= \phi(xz)$.
Then any rainbow copy of $T$ in $(G+yy',\phi)$ containing $yy'$ contains a degree-$2$ vertex $y'$ or $y$ with a leaf neighbour $y$ or $x$, respectively, else it repeats the colour $\phi(xz)$ (see Figure~\ref{fig:cat_converse_no_leaf}, right).
But this contradicts the hypothesis on $T$.

In both cases, we have a contradiction.
This completes the proof of Claim~\ref{clm:no_leaf}.
\end{proof}
Let $ab \in E(G)$ with $d(a)=2$ and $d(b)=1$ if it exists; otherwise, say $a,b \notin V(G)$.

Let $C$ be a component of $G[B_r[D]\setminus\{b\}]$ and let $D_C := D \cap V(C)$. Fix an arbitrary vertex $y \in D_C$. For $U \subseteq V(G)$, let $B_r^C[U] := B_r[U] \cap C.$

We will construct a set $X=X_C$ iteratively as follows.
Initialize $X:=\{y\}$ and $B:=B^C_4[y]$.
At each step, if $D_C\setminus B\neq \emptyset$, choose $(v,u) \in X\times (D_C\setminus B)$ such that $d_G(v,u)$ is minimal.
In this case, add $u$ to $X$ and add elements of $B_4^C[u]$ to $B$.
Repeat until $D_C\setminus B = \emptyset$, in which case we terminate the process.

Let $\le$ be a total order on $X$.
For each $x \in X$, define $S_x$ to be the set of all $v\in V(C)$ such that for all $z \in X$, either $d_G(x,v)<d_G(z,v)$, or $d_G(x,v) = d_G(z,v)$ and $x\le z$.
Observe that $\{S_x:x\in X\}$ is a partition of $V(C)$. That is, $x \in X$ is a closest point to $v$ in $X$, and if there are multiple such points then ties are broken by the total order.

\begin{claim} \label{clm:Sx_ball}
For all $x\in X$, $B_2[x]\setminus\{b\}\subseteq S_x \subseteq B_{r+4}[x]$.
\end{claim}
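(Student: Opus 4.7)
The plan is to split the two containments and to rely on one structural fact about the set $X$. I would first record the observation that any two distinct $x,z\in X$ satisfy $d_G(x,z)\ge 5$. Indeed, by the iterative construction, when the second of the two (say $z$) was added to $X$, it lay in $D_C\setminus B$, and $B$ at that moment already contained $B_4^C[x]$. Since $z\in V(C)$, the condition $z\notin B_4^C[x]$ forces $d_G(z,x)>4$.

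For the first inclusion $B_2[x]\setminus\{b\}\subseteq S_x$, take $v\in B_2[x]\setminus\{b\}$. The first step is to verify $v\in V(C)$, i.e.\ to produce a $v$--$x$ walk lying in $B_r[D]\setminus\{b\}$. The only possible obstruction is that a shortest $v$--$x$ path of length $\le 2$ passes through $b$, but since $d_G(b)=1$, any such path must enter and leave $b$ through the same neighbour, contradicting minimality (and $x\neq b$ because $x\in D$ has degree $\ge 3$). Every intermediate vertex $w$ on the $v$--$x$ path satisfies $d_G(w,D)\le 1\le r$, and $v$ itself is in $B_r[D]$ for the same reason, so the path lies in $G[B_r[D]\setminus\{b\}]$ and thus connects $v$ to $x$ inside $C$. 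Next, for any other $z\in X$, the triangle inequality combined with the first observation gives
\[
d_G(z,v)\ge d_G(z,x)-d_G(x,v)\ge 5-2=3>d_G(x,v),
\]
so $x$ strictly wins the Voronoi competition at $v$, giving $v\in S_x$.

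For the second inclusion $S_x\subseteq B_{r+4}[x]$, let $v\in S_x\subseteq V(C)\subseteq B_r[D]$ and pick $u\in D$ with $d_G(v,u)\le r$. The same shortest-path argument as above lets us choose a $v$--$u$ path avoiding $b$ (since $b$ has degree $1$), whose vertices each satisfy $d_G(\cdot,D)\le r$; hence $u$ lies in the same component $C$, so $u\in D_C$. The construction of $X$ terminated only once $D_C\subseteq B$, so $u\in B_4^C[x']$ for some $x'\in X$, giving $d_G(u,x')\le 4$ and therefore $d_G(v,x')\le r+4$. Since $v\in S_x$, in particular $d_G(x,v)\le d_G(x',v)\le r+4$, which is the desired bound.

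The only real subtlety I anticipate is the careful bookkeeping around the deleted vertex $b$: both inclusions rely on the ability to reroute shortest paths so that they avoid $b$, which is possible precisely because $b$ is a leaf, so that paths visiting $b$ automatically backtrack and are non-minimal. The rest is the triangle inequality together with the spacing property $d_G(x,z)\ge 5$ of distinct elements of $X$.
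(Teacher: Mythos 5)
Your proof is correct and follows the same outline as the paper's: establish the spacing property $d_G(x,z)>4$ for distinct $x,z\in X$ from the iterative construction, then use the triangle inequality for the first inclusion, and the termination condition $D_C\subseteq B$ for the second. Where you go beyond the paper is in explicitly verifying connectivity inside $C$ by rerouting shortest paths away from the degree-1 vertex $b$ and checking that intermediate vertices stay in $B_r[D]$; the paper's own writeup asserts $v\in C$ and $u\in D_C$ somewhat tersely (and its phrasing of the second containment contains a small slip, ``If $d(u,v)=d(u,x)$''), whereas your version makes the component membership watertight. The observation that any internal vertex $w$ of a shortest $u$--$v$ path satisfies $d_G(w,D)\le d_G(w,u)\le d_G(u,v)\le r$ is exactly the right way to see the path stays inside $G[B_r[D]\setminus\{b\}]$, and the leaf property of $b$ handles the deletion cleanly. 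No gaps.
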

\begin{proof}[Proof of Claim]
To show $B_2[x]\setminus\{b\}\subseteq S_x$, take any $v\in B_2[x]\setminus\{b\}$.
Note that $v \in B_r[D]$,as $x \in D$ and $r \ge 2$, so in particular $v \in C$.
For all $z \in X\setminus \{x\}$, we have $z\notin B_4[x]$ or equivalently $x\notin B_4[z]$ by the construction of $X$, so $d_G(x,z) > 4$.
By the triangle inequality, $d_G(x,z)\le d_G(x,v)+d_G(v,z)$, so $d_G(v,z) \ge d_G(x,z)-d_G(x,v)>4-2=2$.
Thus, $x$ is the strictly closest element of $X$ to $v$, so $v\in S_x$.

To show $S_x\subseteq B_{r+4}[x]$, take any $v\in S_x$. We will show that there is some $x' \in X$ such that $d(x',v) \le r+4$. This will suffice, as by definition of $S_x$, a closest point in $X$ to $v$ is $x$. Let $u \in D$ be chosen so that $d(u,v)$ is minimal. Since $v \in B_r[D]$, $d(u,v) \le r$. If $d(u,v) = d(u,x)$, then we are done. Otherwise, $u \notin X$, as $x$ is a closest point in $X$ to $v$. 
So $u \notin X$, so that way $X$ was constructed implies that $d(v,x') \le 4$ for some $x' \in X$. In particular, $d(x',v) \le d(x',v) + d(v,u) \le r+4$, as required.
Thus, $v\in B_{r+4}[x]$.
The claim follows.
\end{proof}

\begin{obs}
    \label{clm:Sx_connected}
For all $x \in X$, $G[S_x]$ is connected.
\end{obs}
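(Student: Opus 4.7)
The plan is to fix $v \in S_x$ with $v \ne x$ and exhibit a $v$-to-$x$ walk inside $G[S_x]$. My candidate is a shortest $v$-$x$ path $P = v_0 v_1 \cdots v_k$ in $G$ (with $v_0 = v$ and $v_k = x$); by Claim~\ref{clm:Sx_ball} we have $k \le r+4$. I will try to verify that $v_i \in S_x$ for every $i$, which would make $P$ itself a path in $G[S_x]$ and immediately yield the observation.

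The distance/ordering half of $v_i \in S_x$ falls out of the triangle inequality. For any $z \in X$, one has $d_G(v, z) \le d_G(v, v_i) + d_G(v_i, z) = i + d_G(v_i, z)$, while $d_G(v, x) = i + d_G(v_i, x)$ since $P$ is shortest. The condition $v \in S_x$, namely $d_G(v, x) \le d_G(v, z)$ with equality forcing $x \le z$, therefore transfers directly from $v$ to $v_i$.

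The remaining task is to show $v_i \in V(C)$, and I expect this to be the main obstacle. A priori there are three obstructions. First, $v_i \ne b$: since $b$ has degree $1$, it cannot appear as an internal vertex of $P$, and $v_0 = v$, $v_k = x \in V(C)$ are not $b$. Second, $v_i$ cannot lie in a component $C' \ne C$ of $G[B_r[D] \setminus \{b\}]$, because tracing $P$ outward from $v_0 \in V(C)$ the first edge exiting $V(C)$ would be an edge of $G[B_r[D] \setminus \{b\}]$ linking $V(C)$ to $V(C')$, forcing $C = C'$. Third, the only remaining possibility is $v_i \in V(G) \setminus B_r[D]$, and by Claim~\ref{clm:cat_converse_not_far} there are at most three such ``exceptional'' vertices.

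To handle this third case, I would first observe that $v_i \notin B_r[D]$ combined with $x \in D$ gives $d_G(v_i, x) > r$, hence $k - i > r$ and therefore $i \le k - r - 1 \le 3$, so any exceptional $v_i$ on $P$ must sit near the $v$-end. I would then recast the argument as induction on $d_G(v, x)$: for the inductive step, I look for a neighbor $v' \in N_G(v) \cap V(C)$ with $d_G(v', x) = d_G(v, x) - 1$, which automatically lies in $S_x$ by the triangle-inequality step above. To force existence of such a $v'$, I would use that $v \in B_r[D]$ is joined to $D$ by a length-$\le r$ path entirely inside $V(C)$, and combine this with $|V(G) \setminus B_r[D]| \le 3$ and the degree bound $\deg_G(v) \le 2$ when $v \notin D$, to rule out the pathological configuration in which every towards-$x$ neighbor of $v$ is exceptional. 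The induction then closes because $d_G(v, x) \le r + 4$ is bounded.
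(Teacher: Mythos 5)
The paper states this as an \emph{Observation} with no proof, so there is no argument in the paper to compare against. Your plan --- walk back along a shortest $v$-$x$ path and check each vertex lies in $S_x$ --- is the natural one, and the triangle-inequality step (a vertex on a shortest path to $x$ inherits the ``closest centre'' property from $v$) is correct. The real difficulty is the one you flag yourself: showing the path never leaves $V(C)$, and your sketch does not close it. Your induction needs a neighbour $v'\in N_G(v)\cap S_x$ (not merely $\cap V(C)$) with $d_G(v',x)<d_G(v,x)$, but the neighbour you can force when $v\notin D$ (so $d_G(v)\le 2$) and the other neighbour $v_1$ lies outside $B_r[D]$ is only guaranteed to move one step closer to $D$, not closer to $x$; indeed in that situation $d_G(v',x)\ge d_G(v,x)$. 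Worse, $v'$ may lie in $S_z$ for some $z\in X$ with $z\neq x$: one can write down graphs satisfying Claims~\ref{clm:cat_converse_not_far}--\ref{clm:no_leaf} and the inclusions of Claim~\ref{clm:Sx_ball} in which a degree-$2$ vertex $v\in S_x$ has one neighbour in $V(G)\setminus B_r[D]$ and the other in $S_z$ for $z\neq x$, so that $v$ is isolated in $G[S_x]$. (Roughly: take $u\in D_C\setminus X$ with $d_G(u,x')=4$ for some $x'\in X$, $x'\neq x$, put $v$ at distance $r$ from $u$ along a path through $v'$, and attach to $v$ a pendant path of length $r+4$ to $x$ whose first three internal vertices are the three vertices of $V(G)\setminus B_r[D]$; then $d_G(v,x)=d_G(v,x')=r+4$, ties broken toward $x$, while $d_G(v',x')=r+3<d_G(v',x)$.) So the inductive step cannot be finished from the structural claims you are allowed to use; either it needs further input from $G\in\Prsat(n,T)$, or the cell definition itself should be changed.

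A cleaner route is to replace $d_G$ in the definition of $S_x$ by the distance inside $C$. With that metric the Voronoi cells are connected by the standard shortest-path argument (every vertex of a shortest $v$-$x$ path in $C$ is automatically in $V(C)$), and one should then re-verify Claim~\ref{clm:Sx_ball} in the $d_C$ metric, which goes through because the short $v$-to-$D$ path and the covering radius of $X$ used in its proof are already realised inside $C$. As written, your proof is incomplete in exactly the step where it would have to exclude the configuration above, and that exclusion does not follow from the ingredients you cite.
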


Take any vertex $x \in X$.
By Lemma~\ref{lem:spanning_tree_dist}, let $Q_x$ be a spanning tree of $G_x:=G[S_x]$ (which is connected by Claim~\ref{clm:Sx_connected}) such that for all $v\in V(G_x)$, $d_{Q_x}(x,v) = d_{G_x}(x,v)$.
Let $L:=\{v \in V(Q_x):d_G(v)=1\}$, $Q_x':=Q_x-L$, and $G_x':=G_x-L$.
Note that $Q_x$, $Q_x'$, $G_x$, and $G_x'$ are all connected, since all vertices in $Q_x$ have a path to $x$, and removing leaves cannot disconnect graphs.
Let $x$ be the root of $Q_x$ and $Q_x'$.
For each $v \in V(Q_x)$, let $T_v$ be the subtree of $Q_x$ rooted at $v$, that is, the subtree of $Q_x$ containing $v$ and its descendants, defined to be any $u\in V(Q_x)$ such that the $u$-$x$ path in $Q_x$ contains $v$.

For each $v \in V(Q_x')$, let $h(v)$ be the height of $v$ in $Q_x'$, i.e., the maximum possible length of a path $v_0,\ldots,v_\ell$, where $v_0=v$ and $v_i$ is a child of $v_{i-1}$ in $Q_x'$ for all $1\le i \le \ell$.
For each $v \in V(Q_x')$, let $f(v):= d_G(v)-3$ if $v$ has a child $u$ in $Q_x$ with $u \in L$, and let $f(v):= d_G(v)-2$ otherwise.
Note that $f(v)$ is always non-negative by the definition of $Q_x'$ and by Claims~\ref{clm:one_leaf} and~\ref{clm:no_leaf}.
Let $T_v'$ be the subtree of $Q_x'$ rooted at $v$.
Let $F(v):= \sum_{u \in V(T_v')}f(u)$.
By Claim~\ref{clm:one_leaf}, in $G_x$, every vertex has at most one neighbour that is in $L$, and every vertex in $L$ has exactly one neighbour that is in $V(Q_x')$, so $F(v) = \sum_{u\in V(T_v)}(d_G(u) - 2)$.

\begin{claim} \label{clm:avg_degree_contradiction}
We have $F(x)\ge 1$.
\end{claim}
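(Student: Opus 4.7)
The plan is to split into cases based on $d_G(x)$ and whether $x$ has a leaf neighbour, settling all but one case by bounding $f(x)$ directly, and then handling the residual case by contradicting $G \in \Prsat(n, T)$.

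Since $x \in D_C$, $d_G(x) \ge 3$, and by Claim~\ref{clm:one_leaf} $x$ has at most one leaf neighbour. Any leaf neighbour $\ell$ of $x$ lies in $S_x$: its only $G$-path to any $z \in X$ passes through $x$, so the Voronoi defining condition for $S_x$ is inherited by $\ell$ from $x$, and in particular $\ell$ is a child of $x$ in $Q_x$. Hence $f(x) = d_G(x) - 3$ if $x$ has a leaf neighbour and $f(x) = d_G(x) - 2$ otherwise. Since $F(x) \ge f(x)$, this immediately gives $F(x) \ge 1$ whenever $d_G(x) \ge 4$, and also whenever $d_G(x) = 3$ and $x$ has no leaf neighbour.

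The residual case is $d_G(x) = 3$ with $x$ having one leaf neighbour, in which $f(x) = 0$. Here I would argue by contradiction, supposing $F(x) = 0$. Using Claims~\ref{clm:one_leaf} and~\ref{clm:no_leaf} together with the fact that $b \notin C$, one checks that $f(u) \ge 0$ for every $u \in V(Q_x')$: a degree-$2$ vertex has no leaf child in $Q_x$ (its only possible leaf neighbour would be $b$, which is outside $S_x$), while a degree-$\ge 3$ vertex has at most one leaf child. So the supposition forces $f(u) = 0$ throughout, and each such $u$ is of one of two types: $d_G(u) = 2$ with no leaf child in $Q_x$, or $d_G(u) = 3$ with exactly one leaf child in $Q_x$. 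In either case, besides its $Q_x$-parent and any leaf child, $u$ has at most one other $G$-neighbour. Thus every non-root vertex of $Q_x'$ has at most one child in $Q_x'$, whereas $x$ itself has exactly two (its two non-leaf $G$-neighbours, both of which are in $S_x$). It follows that $Q_x'$ is a single path through $x$, that $G[S_x]$ equals the tree $Q_x$ (no non-tree edges could fit within the degree budget), and that $G[S_x]$ is a caterpillar of maximum degree $3$ with a leaf of $G$ hanging off each degree-$3$ spine vertex.

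The main obstacle is converting this structural description into a contradiction with $G \in \Prsat(n, T)$. The plan is as follows. Let $\phi$ be a rainbow-$T$-free proper colouring of $G$, let $c_1, c_2, c_3$ be the three colours that $\phi$ assigns at $x$, and recolour $G[S_x]$ so that the spine edges alternate between $c_1$ and $c_2$ and every pendant leaf edge receives $c_3$. This is a valid proper edge colouring of the caterpillar, and since every rainbow subgraph of $G[S_x]$ uses at most three edges while $T$ has at least $4$ edges (as $T$ has diameter $\ge 4$), no rainbow copy of $T$ can lie entirely inside $G[S_x]$. Any rainbow copy of $T$ in the recoloured graph must therefore exit $S_x$ through one of at most two external portals at the spine endpoints and traverse an internal degree-$2$ subpath of the spine; by the definition of $r$ and the $2$-colour alternation along the spine, a sufficiently long traversal must repeat a spine colour, while a short traversal leaves no room to realise the hypothesis that $T$'s degree-$2$ vertices have no leaf neighbours. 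Finally I would insert a non-edge $e$ joining the leaf of $G$ at $x$ to a leaf near the far end of the spine, coloured consistently with the palette, and verify by a case analysis on the spine endpoints (whether they are leaves of $G$, and which external portal each connects to, using Claim~\ref{clm:Sx_ball} to bound the relevant distances) that $G + e$ still admits a proper rainbow-$T$-free colouring. This contradicts $G \in \Prsat(n, T)$ and yields $F(x) \ge 1$. The main technical challenge is carrying out the spine-endpoint case analysis uniformly.
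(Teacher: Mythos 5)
Your first step—ruling out $F(x)\ge 1$ by a degree case-split at $x$—is correct and essentially equivalent to the paper's starting observation that $F(x)=0$ forces every $f(v)=0$ and hence $d_G(x)=3$ with a single leaf child. However, your analysis of the residual case contains a genuine gap: you assert that ``$G[S_x]$ equals the tree $Q_x$ (no non-tree edges could fit within the degree budget),'' but this is false. Your own accounting shows that each non-root spine vertex has at most one $Q_x'$-child, which forces $Q_x'$ to be a path; but a path has two endpoints, and each endpoint has exactly one $G$-neighbour unaccounted for by the $Q_x$-parent and a possible leaf child. Those two spare edges can point to each other, so $G_x'=G[S_x]-L$ can be a \emph{cycle}, making $G_x$ an entire component of $G$ (a cycle with pendant leaves). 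The paper handles this as a separate case by adding a non-edge between two low-degree vertices of $G_x$, noting the resulting component has maximum degree $3$ and can therefore be properly $4$-edge-coloured, which is rainbow $T$-free since $|E(T)|>4$. Your proposal has no mechanism for this case, since it is built around a spine with two endpoints connecting to the rest of $G$.

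A secondary issue is that your path-case recolouring (alternate the entire spine between $c_1,c_2$ and colour all pendants $c_3$) is not checked for propriety where the spine endpoints meet their external neighbours, whose $\phi$-colours are uncontrolled; the alternation direction is globally determined, so it may clash at one end. The paper avoids this by recolouring only a short window around $x$ (essentially $v_{-2},\ldots,v_2$), pinning the boundary colours to $\phi(v_{-3}v_{-2})$ and $\phi(v_2v_3)$ so that the edit stays proper, and only then inserting the non-edge $v_{-1}v_1$ (or, if $v_{\pm1}$ has a pendant leaf $w$, the non-edge $xw$). Your end-game sketch—add an edge and argue by a ``portal'' count—points in a similar direction but would need this kind of explicit local control to close, and in particular cannot go through without first disposing of the cycle case.
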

\begin{proof}[Proof of Claim]

Suppose otherwise.
Then $f(v)=0$, as $f(v)$ is non-negative, for all $v\in V(Q_x')$, so $V(Q_x)$ can be partitioned into pairs $\{u,v\}$ with $uv \in E(G_x)$ and $\{d_G(u),d_G(v)\}=\{1,3\}$, along with singletons $\{w\}$ with $d_G(w)=2$.
Thus, since $\Delta(G_x') = 2$ and $G_x'$ is connected, $G_x'$ is either a cycle or a path.

If $G_x'$ is a cycle, then since every vertex in $G_x'$ has all of its neighbours in $G_x$, we have that $G_x$ is a connected component of $G$ (recall that $G_x'$ is obtained from $G_x$ by removing leaves).
Since $d_G(x) \ge 3$, we have $|V(G_x)|\ge 4$, so there exists $uv \in E(\overline{G_x})$ such that $d_G(u),d_G(v) \le 2$.
Thus, by Claim~\ref{clm:one_leaf}, $\Delta(G_x+uv) \le 3$, and so $G_x+uv$ can be properly $4$-coloured, and so there exists a rainbow $T$-free proper colouring of $G+uv$, since $4<|E(T)|$.
This contradicts $G \in \Prsat(n,T)$.

Suppose $G_x'$ is a path $v_{-k},\ldots,v_\ell$, where $v_0=x$.
By the assumption that $F(x)=0$ and the properties of $G_x$, $v_{-k}$ and $v_\ell$ must be degree-$2$ vertices in $G$ with exactly one neighbour not in $G_x$ each, say $v_{-k-1}$ and $v_{\ell+1}$, respectively.
Let $\phi$ be a rainbow $T$-free proper colouring of $G$.
If $a \in \{v_{-1},v_1\}$, assume without loss of generality that $a=v_1$, and let $c_1:= \phi(v_{-3}v_{-2})$ and let $c_2\neq c_1$; otherwise, let $c_1:=\phi(v_{-3}v_{-2})$ and $c_2:=\phi(v_{2}v_{3})$.
(This definition is possible since $B_2[x]\setminus\{b\}\subseteq S_x$ by Claim~\ref{clm:Sx_ball}.)
We will recolour $G$ with a colouring $\phi'$ and add an edge $e$ such that $G+e$ is still rainbow $T$-free.

If $c_1\neq c_2$, define $(c_3,c_4):=(c_1,c_2)$; otherwise, let $c_3,c_4\neq c_1$ be distinct colours.
If $c_1\neq c_2$, let $c_5\notin\{c_1,c_2\}$; otherwise, let $c_5 := c_1$.
In the new colouring $\phi'$, recolour the path $v_{-2},v_{-1},v_0,v_1,v_2$ with alternating colours $c_4,c_3$.
Recolour all edges $v_iu\in E(G_x)$ with $-2\le i \le 2$ and $u \in L$ using the colour $c_5$, which is possible as each vertex of $G_x$ has degree at most 3 in $G$.
Leave all other edge colours unaltered.
Observe that $\phi'$ is a rainbow $T$-free proper colouring, since any tree subgraph of $G$ with diameter $\ge 4$ that contains a recoloured edge either has a repeated colour or has a degree-$2$ vertex with a degree-$1$ neighbour.

\begin{figure}
    \centering
    \includegraphics[width=\textwidth]{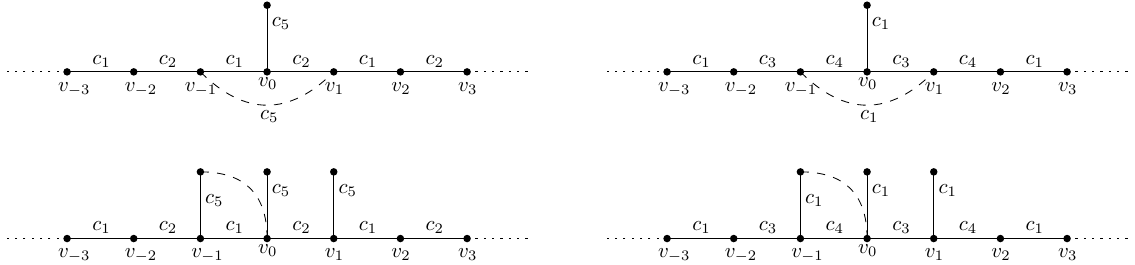}
    \caption{Examples of cases from Claim~\ref{clm:avg_degree_contradiction} when $r=2$ and $G_x'$ is a path $v_{-2},\ldots,v_2$. In all cases, an edge can be added to $(G,\phi)$ in some colour such that no rainbow copy of $T$ is created.}
    \label{fig:cat_converse_avg_degree_contradiction}
\end{figure}

In the new colouring $\phi'$, recolour the path $v_0,\ldots,v_k$ with alternating colours $c_4,c_3$.
Recolour all edges in $G_x$ that intersect $L$ with the colour $c_5$, which is possible as each vertex of $G'$ has degree at most 3 in $G$.
Leave all other edge colours unaltered.
Observe that $\phi'$ is a rainbow $T$-free proper colouring, since any tree subgraph of $G$ with diameter $\ge 4$ that contains a recoloured edge either has a repeated colour or has a degree-$2$ vertex with a degree-$1$ neighbour.

If $d_G(v_{-1}) = d_G(v_{1}) = 2$, add the edge $e:=v_{-1}v_{1}$ to $(G,\phi')$ in colour $c_5$.
Otherwise, let $u \in \{v_{-1},v_{1}\}$ with $d_G(u)=3$, let $w$ be the leaf neighbour of $u$, and add the edge $e:=xw$ to $(G,\phi')$ in a colour not in $\{c_3,c_4,c_5\}$.
Observe that this extension of $\phi'$ is a rainbow $T$-free proper colouring, since any tree subgraph of $G+e$ containing $e$ of order $|V(T)|$ either has a repeated colour or has a degree-$2$ vertex with a degree-$1$ neighbour (see Figure~\ref{fig:cat_converse_avg_degree_contradiction}).
This contradicts $G\in \Prsat(n,T)$.

In all cases, we obtain a contradiction.
This completes the proof of Claim~\ref{clm:avg_degree_contradiction}.
\end{proof}

\begin{claim} \label{clm:small_tree}
For all $v \in V(Q_x')$, we have 
\[|V(T_v')|\le \begin{cases}
    (F(v)+1)h(v)+1, & v \neq x, \\
    (F(v)+2)h(v)+1, & v=x.
\end{cases}\]
\end{claim}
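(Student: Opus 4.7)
The plan is to prove this by induction on $h(v)$, with the main combinatorial input being an upper bound on the number of children of $v$ in $Q_x'$ in terms of $f(v)$.

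\textbf{Base case.} If $h(v)=0$, then $v$ is a leaf of $Q_x'$, so $T_v'=\{v\}$ and $|V(T_v')|=1$, which matches both formulas.

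\textbf{Bounding the children of $v$ in $Q_x'$.} Write $k$ for the number of children of $v$ in $Q_x'$. In $Q_x$ itself, the number of children of $v$ is at most $d_G(v)-[v\ne x]$, where the correction accounts for the parent edge. By Claim~\ref{clm:one_leaf}, $v$ has at most one neighbour in $L$, so passing from $Q_x$ to $Q_x'$ removes at most one child, and does so exactly when $v$ has a child in $L$. Matching against the definition of $f$, this gives
\begin{equation*}
k \le \begin{cases} f(v)+1, & v\ne x,\\ f(v)+2, & v=x.\end{cases}
\end{equation*}

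\textbf{Inductive step.} Assume $h(v)\ge 1$ and let $u_1,\dots,u_k$ be the children of $v$ in $Q_x'$. Each $u_i$ is non-root, with $h(u_i)\le h(v)-1$, so by the inductive hypothesis applied in the non-root case,
\begin{equation*}
|V(T_v')| = 1 + \sum_{i=1}^{k} |V(T_{u_i}')| \le 1 + \sum_{i=1}^{k}\bigl((F(u_i)+1)(h(v)-1)+1\bigr).
\end{equation*}
Using $\sum_{i=1}^{k} F(u_i) = F(v)-f(v)$, this rearranges to
\begin{equation*}
|V(T_v')| \le 1 + k + (h(v)-1)\bigl(F(v)-f(v)+k\bigr).
\end{equation*}
Substituting the bound on $k$: for $v\ne x$, $k\le f(v)+1$ gives $F(v)-f(v)+k\le F(v)+1$, and
\begin{equation*}
|V(T_v')| \le (F(v)+1)h(v) + 1 + (f(v)-F(v)) \le (F(v)+1)h(v)+1,
\end{equation*}
since $f(v)\le F(v)$ (because every $f(u)\ge 0$ and $v\in V(T_v')$). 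For $v=x$, the analogous computation with $k\le f(x)+2$ yields $|V(T_x')|\le (F(x)+2)h(x)+1$.

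\textbf{Main obstacle.} The only non-routine point is the bookkeeping in bounding $k$: one must carefully track how passing from $Q_x$ to $Q_x'$ interacts with the presence of a leaf child, so that the expression $f(v)+1$ (respectively $f(v)+2$) correctly absorbs both the parent edge and any child in $L$. Once this is done, the arithmetic collapses cleanly thanks to the slack $f(v)-F(v)\le 0$.
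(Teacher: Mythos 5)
Your proof is correct and follows essentially the same strategy as the paper's: induction on $h(v)$, bounding the number of children of $v$ in $Q_x'$ by $f(v)+1$ (or $f(v)+2$ for $v=x$) using Claim~\ref{clm:one_leaf}, summing the inductive bounds over the children, and absorbing the surplus using $F(v)\ge f(v)$. The bookkeeping and final algebra agree with the paper's computation.
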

\begin{proof}[Proof of Claim]
Use induction on $h(v)$.
When $h(v)=0$, we have $|V(T_v')| = |\{v\}|\le 1$.

Suppose $h(v)>0$.
Let $v_1,\ldots,v_k$ be the children of $v$ in $Q_x'$; note that $k\le f(v)+1$ if $v$ has a parent in $Q_x'$ (i.e., $v\neq x$), and $k\le f(v)+2$ otherwise.
Now, $F(v) = \sum_{i=1}^k F(v_i)+f(v)$, so by the induction hypothesis,
\begingroup
\allowdisplaybreaks
\begin{align*}
    |V(T_v)| &\le \sum_{i=1}^k|V(T_{v_i})|+1 \\
    &\le \sum_{i=1}^k((F(v_i)+1)h(v_i)+1)+1 \\
    &\le (h(v)-1)\sum_{i=1}^k F(v_i)+k(h(v)-1)+k+1 \\
    &\le \begin{cases}
        (h(v)-1)(F(v)-f(v))+(f(v)+1)h(v)+1, & v\neq x, \\
        (h(v)-1)(F(v)-f(v))+(f(v)+2)h(v)+1, & v=x \\
    \end{cases} \\
    &= \begin{cases}
        (F(v)+1)h(v)-(F(v)-f(v))+1, & v\neq x, \\
        (F(v)+2)h(v)-(F(v)-f(v))+1, & v=x
    \end{cases} \\
    &\le \begin{cases}
    (F(v)+1)h(v)+1, & v \neq x, \\
    (F(v)+2)h(v)+1, & v=x.
\end{cases}
\end{align*}
\endgroup
Therefore, the claim follows by induction.
\end{proof}
By Claim~\ref{clm:one_leaf}, for all $v\in V(Q_x')$, we have $|N_{G_x}(v)\cap L|\le 1$.
Further, for all $v \in L$, we have $|N_{G_x}(v)\cap V(Q_x')| = 1$.

By Claim~\ref{clm:Sx_ball}, $h(x)\le r+4$.
So by Claim~\ref{clm:small_tree},
\[|S_x| = |V(Q_x')|+|L| \le 2|V(Q_x')| = 2|V(T_x')| \le 2((F(x)+2)h(x)+1) \le 2((F(x)+2)(r+4)+1). \]
For any subset $\emptyset \neq S\subseteq V(G)$, define $d_{\mr{av}}(S):= \frac{\sum_{v\in S}d_G(v)}{|S|}$. Thus, since $F(x) = \sum_{v\in V(S_x)}(d_G(v)-2)$, we have
\[ d_{\mr{av}}(S_x) \ge 2+\frac{F(x)}{|S_x|}\ge 2+\frac{F(x)}{2((F(x)+2)(r+4)+1)}. \] 
Now, the function $g:\bb{R}^+\to \bb{R}$ defined by $g(z):= \frac{z}{2((z+2)(r+4)+1)}$ has $\frac{d g}{d z} = \frac{2r+9}{2((z+2)(r+4)+1)^2}>0$, so $g$ is increasing.
Thus, since $F(x)\ge 1$ by Claim~\ref{clm:avg_degree_contradiction}, we have 
\begin{equation} \label{eqn:cat_converse}
    d_{\mr{av}}(S_x) \ge 2+g(1) = 2+\frac{1}{6r+26}.
\end{equation} 
Let $\ca{C}$ be the set of components of $G[B_r[D]\setminus\{b\}]$; by Claim~\ref{clm:cat_converse_not_far}, $|\bigcup_{C \in \ca{C}}V(C)| \ge n-4$.
For each $C\in \ca{C}$, $\{S_x:x\in X_C\}$ is a partition of $V(C)$.
So by (\ref{eqn:cat_converse}), since $x$ was taken to be an arbitrary element of $X=X_C$, where $C$ is an arbitrary element of $\ca{C}$, we have
$$\prsat(n,T) \ge \frac{d_{\mr{av}}(V(G))}{2}n \ge \frac12 d_{\mr{av}}\left(\bigcup_{C\in \ca{C}}\bigcup_{x\in X_C}S_x\right)(n-4)  \ge \left(1+\frac{1}{12r+52}\right)n+O(1),$$
as desired.
\end{proof}
\begin{remark}
Theorem~\ref{thm:cat_converse} gives the only known lower bound on $\prsat(n,T)$ (beyond the somewhat trivial bound $\prsat(n,T)\ge \ff{n}{2}$) when $T$ is a caterpillar with central path $v_1,v_2,v_3$, where $d(v_1),d(v_3)\ge 3$ and $d(v_2)=2$.
In this case, because $\delta_2(T)=2$ and $T$ is neither a broom nor has diameter $\ge 5$, previous results do not apply.
\end{remark}

\section{Subdivided stars and double stars}\label{sec:stars}
\subsection{Subdivided stars}
For $k \ge 4$, let $T_k^*$ be the tree on $k$ vertices obtained by subdividing one edge of the star $K_{1,k-2}$.
 
The goal of this subsection is to prove the following theorem.
\begin{restatable}{theorem}{subdividedstar} \label{thm:Tkast rainbow sat}
For all $k \ge 4$ and $n \ge k+3$, 
\[\prsat(n,T_k^*) = \sat(n,T_{k+1}^*) = n-\ff{n+k-1}{k+1}.\]
\end{restatable}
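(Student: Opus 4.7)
I would prove the equality in two stages. The second equality $\sat(n,T_{k+1}^*)=n-\ff{n+k-1}{k+1}$ is a classical result of K\'aszonyi and Tuza~\cite{Kaszonyi1986}; the extremal construction is essentially a disjoint union of stars $K_{1,k-1}$ together with one small exceptional component absorbing the residue of $n$ modulo $k+1$. The main content is to show $\prsat(n,T_k^*) = \sat(n,T_{k+1}^*)$, which I would establish via two inequalities, both resting on the following key observation: in any proper edge-colouring of any graph containing $T_{k+1}^*$ there is a rainbow copy of $T_k^*$. To see this, let the embedded $T_{k+1}^*$ have centre $c$ of degree $k-1$, path $c,u,w$, and further leaves $\ell_1,\ldots,\ell_{k-2}$; the $k-1$ edges at $c$ receive distinct colours by properness and $\phi(uw)\neq\phi(cu)$, so $\phi(uw)$ coincides with at most one $\phi(c\ell_i)$, and discarding that $\ell_i$ leaves a rainbow $T_k^*$.

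For the upper bound $\prsat(n,T_k^*)\le\sat(n,T_{k+1}^*)$ I would take $G^*\in\Sat(n,T_{k+1}^*)$. Property (2) of proper rainbow $T_k^*$-saturation follows at once from the observation together with $T_{k+1}^*$-saturation of $G^*$. For Property (1), each $K_{1,k-1}$ component of $G^*$ has diameter $2$ and so contains no $T_k^*$ (which has diameter $3$); hence any proper colouring of $G^*$ is automatically rainbow $T_k^*$-free, once the small exceptional component is handled directly.

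For the lower bound $\prsat(n,T_k^*)\ge \sat(n,T_{k+1}^*)$ I would take a properly rainbow $T_k^*$-saturated graph $G$ and aim to show it is itself $T_{k+1}^*$-saturated. That $G$ is $T_{k+1}^*$-free follows by applying the key observation to the rainbow $T_k^*$-free colouring $\phi$ guaranteed by Property~(1). For the saturation half, given a non-edge $uv$, I would assume for contradiction that $G+uv$ is $T_{k+1}^*$-free and try to extend $\phi$ to a rainbow $T_k^*$-free proper colouring of $G+uv$, contradicting Property~(2). Any rainbow $T_k^*$ in the extended colouring must use the edge $uv$, so the colour $c:=\phi(uv)$ must coincide, for every embedding of $T_k^*$ in $G+uv$ containing $uv$, with the colour of some other edge of that embedding. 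The $T_{k+1}^*$-freeness of $G+uv$ restricts every vertex of degree $\ge k-1$ to have only leaf neighbours, which pins down the degrees of $u$ and $v$ and forces the embeddings of $T_k^*$ through $uv$ into a short list of configurations (with $uv$ playing the role of a centre-to-leaf edge, the centre-to-middle edge, or the middle-to-end edge). Enumerating these cases, I would show that a common feasible $c$ can always be chosen inside the existing palette of $\phi$.

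The main obstacle is this last extension step. A clean ``converse observation''---that every $T_{k+1}^*$-free graph has a rainbow $T_k^*$-free proper colouring---would finish the argument immediately, but is false in general: for instance, $C_5$ is $T_5^*$-free yet in every proper colouring of $C_5$ some $P_4=T_4^*$ is rainbow (any three consecutive edges of $C_5$ cannot all repeat a colour, by a short parity check). So the argument must exploit more than $T_{k+1}^*$-freeness of $G+uv$; the extra leverage comes from the colouring $\phi$ of $G$ that is already in hand, which gives enough flexibility when selecting $c$. Tracking which of the potential $T_k^*$-embeddings through $uv$ are actually rainbow in $\phi$ (as opposed to already containing a repeated colour), and matching $c$ to a forced colour of one of these embeddings while respecting properness at $u$ and $v$, is the technical crux.
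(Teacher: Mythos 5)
Your key observation (any $T_{k+1}^*$ forces a rainbow $T_k^*$ in every proper colouring) and your upper-bound argument match the paper's exactly, although the paper attributes $\sat(n,T_{k+1}^*) = n - \lfloor(n+k-1)/(k+1)\rfloor$ and the star-forest extremal graph to J.~Faudree, R.~Faudree, Gould, and Jacobson rather than to K\'aszonyi and Tuza.

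Your lower bound, however, goes down a different and incomplete road. You aim to prove the structural statement that a properly rainbow $T_k^*$-saturated graph $G$ is $T_{k+1}^*$-saturated, by extending the given rainbow $T_k^*$-free colouring $\phi$ of $G$ across a hypothetical bad non-edge. You correctly flag that the naive converse of the key observation fails (your $C_5$ example) and that the case analysis is the crux --- but the crux is exactly what is missing. Two concrete worries: (i) you restrict to \emph{extending} $\phi$ by choosing only the colour of $uv$, whereas in the paper's other lower-bound arguments (Theorems~\ref{thm:gen_path_lower_bound} and~\ref{thm:broom4_lower_bound}) one typically has to \emph{recolour} part of the host graph before the new edge can be accommodated; a pure extension may simply be impossible. (ii) Even if the structural statement is true, you produce no proof of it, only a plan; as written this is a gap, not a proof.

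The paper avoids the structural question entirely. It takes $G \in \Prsat(n,T_k^*)$ with $c(G)$ (the number of components) minimal and performs local surgery: if $G$ has two components each of order at most $k$, or an isolated vertex, one can replace some components by stars without increasing the edge count or leaving $\Prsat(n,T_k^*)$ (this is where Lemma~\ref{lem:Tkast}, Lemma~\ref{lem:kTree_not_Tkast_sat}, and the key observation are used), contradicting minimality of $c(G)$. What survives is that all components have order at least $2$ and at most one has order at most $k$, giving $c(G) \le \lfloor(n+k-1)/(k+1)\rfloor$ and hence $|E(G)| \ge n - c(G) \ge \sat(n,T_{k+1}^*)$. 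This route is purely quantitative: it never shows that the extremal $G$, let alone an arbitrary $G \in \Prsat(n,T_k^*)$, is $T_{k+1}^*$-saturated. If you want to salvage your approach, you should either complete the extension/recolouring case analysis rigorously (it is not obvious it goes through), or switch to the component-surgery style of the paper.
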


This yields the following corollary.
\begin{corollary}
For all $n\ge 7$,
\[ \prsat(n,P_4) = n-\ff{n+3}{5}= \begin{cases}
    \cf{4n}{5}, & n \equiv 1 \pmod{5}, \\
    \ff{4n}{5}, & \text{otherwise}.
\end{cases} \]
\end{corollary}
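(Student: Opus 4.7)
The plan is to derive this corollary as a direct specialization of Theorem~\ref{thm:Tkast rainbow sat} to $k=4$, followed by an elementary case analysis modulo $5$. First I would observe that $T_4^* = P_4$: by definition, $T_4^*$ is obtained by subdividing one edge of $K_{1,2}$, and since $K_{1,2}$ is itself the path $P_3$, subdividing either of its edges produces $P_4$. Hence Theorem~\ref{thm:Tkast rainbow sat} applied with $k=4$ immediately yields
\[
\prsat(n,P_4) = n-\left\lfloor \frac{n+3}{5}\right\rfloor \quad \text{for all } n \ge 7,
\]
noting that the hypothesis $n \ge k+3 = 7$ is exactly the stated range.

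It then remains to rewrite $n - \lfloor (n+3)/5\rfloor$ in the piecewise form claimed in the corollary. The plan is to write $n = 5q+r$ with $r \in \{0,1,2,3,4\}$ and compute $\lfloor(n+3)/5\rfloor$ in each case. One checks that $\lfloor(n+3)/5\rfloor = q$ when $r \in \{0,1\}$ and $\lfloor(n+3)/5\rfloor = q+1$ when $r \in \{2,3,4\}$, which gives
\[
n - \left\lfloor \frac{n+3}{5}\right\rfloor = \begin{cases} 4q, & r=0,\\ 4q+1, & r=1,\\ 4q+1, & r=2,\\ 4q+2, & r=3,\\ 4q+3, & r=4.\end{cases}
\]
Comparing with $4n/5 = 4q + 4r/5$, one sees that in each case $r \neq 1$ the result equals $\lfloor 4n/5\rfloor$, while for $r=1$ it equals $4q+1 = \lceil 4n/5\rceil$ (strictly exceeding $\lfloor 4n/5\rfloor = 4q$). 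This matches the piecewise formula in the corollary statement.

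There is no real obstacle here; the content is entirely in Theorem~\ref{thm:Tkast rainbow sat}. The only care required is the identification $T_4^* = P_4$ (which is immediate from the definition) and a routine verification that the floor and ceiling expressions coincide as stated in the five residue classes modulo $5$.
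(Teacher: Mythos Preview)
Your proposal is correct and matches the paper's approach: the corollary is stated immediately after Theorem~\ref{thm:Tkast rainbow sat} with no separate proof, so the intended derivation is exactly the specialization $T_4^\ast=P_4$, $k=4$ that you describe, together with the routine residue check modulo $5$.
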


We begin with the following lemmas.

\begin{lemma} \label{lem:Tkast}
Let $k \ge 4$, and let $G$ be a graph with a component $H \cong K_{1,k}$ and no isolated vertices.
Then for any non-edge $e \in E(\overline{G})$ incident to a vertex in $H$, $G+e$ contains a copy of $T_{k+1}^\ast$.
\end{lemma}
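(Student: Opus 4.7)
Let $x$ denote the center of $H$ and $y_1, \dots, y_k$ its leaves. Recall that $T_{k+1}^\ast$ has a vertex $c$ of degree $k-1$ whose neighbourhood consists of $k-2$ leaves together with one vertex $v$ of degree $2$, and $v$ in turn has a leaf neighbour. So to exhibit a copy of $T_{k+1}^\ast$ in $G+e$ it suffices to find a vertex of degree at least $k-1$ from which $k-2$ distinct leaves and a path of length $2$ sprout, all internally disjoint. The plan is to split on where the endpoints of $e$ lie in $V(H) = \{x,y_1,\dots,y_k\}$ and in each case read off such a configuration directly; the only mildly delicate point is ensuring that the chosen vertices are pairwise distinct, which is where the ``no isolated vertices'' hypothesis enters.

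Since $x$ is already adjacent to every $y_i$ in $G$, the non-edge $e$ must fall into one of three cases. \textbf{Case 1:} $e = xw$ with $w \notin V(H)$. Since $H$ is a \emph{component} of $G$ and $w \notin V(H)$, $w$ is not isolated so it has some neighbour $w' \in V(G) \setminus V(H)$, and $w' \notin \{x, y_1, \dots, y_k\}$. Then $x$ together with the $k-2$ leaves $y_1, \dots, y_{k-2}$ plus the path $x\text{--}w\text{--}w'$ forms the desired copy of $T_{k+1}^\ast$ rooted at $x$. \textbf{Case 2:} $e = y_i y_j$ for some $i \ne j$; without loss of generality $e = y_1 y_2$. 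Take $x$ as the center, the $k-2$ leaves $y_3, \dots, y_k$, and the path $x\text{--}y_1\text{--}y_2$; this is a copy of $T_{k+1}^\ast$. \textbf{Case 3:} $e = y_1 w$ with $w \in V(G) \setminus V(H)$. Take $x$ as the center, the $k-2$ leaves $y_2, \dots, y_{k-1}$, and the path $x\text{--}y_1\text{--}w$; since $w \notin V(H)$ the vertex $w$ is distinct from $x$ and every $y_i$, so this is again a copy of $T_{k+1}^\ast$.

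In every case we have located a copy of $T_{k+1}^\ast$ in $G+e$, which completes the proof. The only place where the hypotheses of the lemma are really needed is Case 1, where we invoke the facts that $H$ is an entire component (so $w$ lies in a different component from $V(H)$) and that $G$ has no isolated vertices (so $w$ has some neighbour $w'$, necessarily outside $V(H)$). The main (minor) obstacle throughout is bookkeeping the disjointness of the chosen vertices, but once the correct ``center'' is fixed in each case the required configuration is immediate.
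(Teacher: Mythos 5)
Your proof is correct and takes essentially the same approach as the paper's: both split on whether the non-edge is a leaf--leaf, leaf--outside, or center--outside pair, and in each case place the center of $H$ as the degree-$(k-1)$ vertex of $T_{k+1}^\ast$, using the ``no isolated vertices'' and ``$H$ is a component'' hypotheses only in the center--outside case to obtain the second vertex of the pendant path.
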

\begin{proof}
Let $V(H) = \{v,v_1,v_2,\ldots,v_k\}$, and let $E(H) = \{vv_i: i \in [k]\}$.
Take any $xy \in E(\overline{G})$ with $x \in V(H)$.

If $y \in V(H)$, then since there are no non-edges in $H$ containing the vertex $v$, we have $x,y \in \{v_1,\ldots,v_k\}$; say without loss of generality that $x=v_1$ and $y=v_2$.
Then the subgraph $T$ of $G+xy$ with $V(T) = V(H)$ and $E(T) = \{v_1v_2,vv_1\}\cup \{vv_3,\ldots,vv_k\}$ is a copy of $T_{k+1}^\ast$ (see Figure \ref{fig:t5ast}, left).

If $x \in \{v_1,\ldots,v_k\}$ and $y \notin V(H)$, then without loss of generality, $x = v_1$.
The subgraph $T$ of $G+xy$ with $V(T) = \{y,v,v_1,\ldots,v_{k-1}\}$ and $E(T) = \{v_1y,vv_1,vv_2,\ldots, vv_{k-1}\}$ is a copy of $T_{k+1}^\ast$ (see Figure \ref{fig:t5ast}, centre).

If $x = v$ and $y \notin V(H)$, then since $G$ has no isolated vertices, there exists $z \in N(y)$.
Because $y$ is in a separate component of $G$ from $H$, $z \notin V(H)$.
The subgraph $T$ of $G+xy$ with $V(T) = \{z,y,v,v_1,v_2,\ldots,v_{k-2}\}$ and $E(T) = \{zy,yv,vv_1,vv_2,\ldots,vv_{k-2}\}$ is a copy of $T_{k+1}^\ast$ (see Figure \ref{fig:t5ast}, right).

In all cases, $G+xy$ contains a copy of $T_{k+1}^\ast$, so we are done.
\begin{figure}
    \centering
    \includegraphics{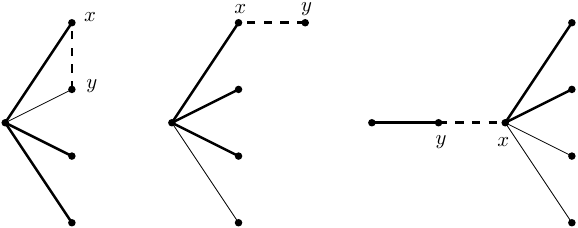}
    \caption{Copy of $T_k^\ast$ in $G+xy$ from Lemma \ref{lem:Tkast} in the case $k=5$}
    \label{fig:t5ast}
\end{figure}
\end{proof}

\begin{lemma} \label{lem:kTree_not_Tkast_sat}
For all $k\ge 4$ and $3 \le m \le k$, for any forest $F$ on $m$ vertices, there exists $e \in E(\overline{F})$ and a rainbow $T_k^\ast$-free proper colouring of $F+e$.
\end{lemma}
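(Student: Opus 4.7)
The proof will split on whether $m < k$ or $m = k$. The case $m < k$ is immediate: since $|V(F+e)| = m < k$, $F+e$ cannot contain $T_k^\ast$ as a subgraph, and a non-edge exists because $F$, being a forest on $m \ge 3$ vertices, has at most $m - 1 < \binom{m}{2}$ edges. For $m = k$, my central observation would be that every $T_k^\ast$-subgraph of $F+e$ is spanning and hence uses $k-1$ edges, so it suffices to find $e$ such that $F+e$ admits a proper edge coloring with at most $k-2$ colours; then every $(k-1)$-edge subgraph has a repeated colour and is non-rainbow. By Vizing's theorem, this holds whenever $\Delta(F+e) \le k-3$, so the plan is to choose $e$ to force that bound when possible, and to treat the exceptional high-degree cases explicitly.

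I would handle the disconnected subcases first. If $F$ has at least three components, then $|E(F+e)| \le k-2 < k-1$ rules out any $T_k^\ast$-copy. If $F$ has exactly two components, one of which contains at least three vertices, I would add $e$ within that component so $F+e$ remains disconnected and hence has no spanning tree. The only remaining disconnected case is $k=4$, $F = 2K_2$, where $F+e$ is forced to be $P_4 = T_4^\ast$ and a proper $2$-coloring of $P_4$ works. When $F$ is a tree with $\Delta(F) \le k-3$, adding $e$ between two leaves (non-adjacent in any tree on $\ge 3$ vertices) gives $\Delta(F+e) \le \max(\Delta(F), 2) \le k-3$, so Vizing yields a proper $(k-2)$-edge coloring. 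When $\Delta(F) = k-1$, so $F = K_{1,k-1}$, I would add $e = u_1u_2$ between two leaves and colour $\phi(vu_i) := c_i$ (distinct) with $\phi(u_1u_2) := c_3$; one checks that the two spanning $T_k^\ast$-subgraphs $F+e-vu_j$ ($j \in \{1,2\}$) each contain the repeated colour $c_3$ on $vu_3$ and $u_1u_2$.

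The main obstacle is the case $\Delta(F) = k-2$, where $F \cong T_k^\ast$, because $F \subseteq F+e$ is itself a $T_k^\ast$-copy regardless of $e$. Writing $v$ for the hub, $u$ for the subdivision vertex with leaf $w$, and $x_1, \ldots, x_{k-3}$ for the remaining hub-leaves, I would take $e = x_1 x_2$ for $k \ge 5$ and observe that the only spanning subtree of $F+e$ with maximum degree $k-2$ is $F$ itself (the other two spanning trees, obtained by deleting $vx_1$ or $vx_2$, have maximum degree $k-3$). The proper coloring $\phi(vu) = 0$, $\phi(uw) = \phi(vx_1) = 1$, $\phi(vx_i) = i$ for $i \in [2, k-3]$, and $\phi(x_1 x_2) = 0$ would then make $F$ non-rainbow via the repeated colour $1$. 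The exceptional subcase $k = 4$, where $F = P_4$, is handled by adding the chord joining the endpoints of $P_4$ to obtain $C_4$, which admits a proper $2$-coloring that makes each of its four spanning $P_4$-subgraphs non-rainbow.
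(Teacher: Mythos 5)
Your proof is correct and follows essentially the same route as the paper: the $m<k$ case is dismissed immediately, and for $m=k$ the argument splits on $\Delta(F)$, with the same explicit edge-additions and colourings for the star and $T_k^\ast$ cases (adding an edge between two hub-leaves, colouring so that a pendant edge repeats a hub colour). The only substantive difference is in the low-degree case: where you invoke Vizing to obtain a proper $(k-2)$-edge-colouring and then argue by pigeonhole on $(k-1)$-edge spanning subgraphs, the paper simply notes that $\Delta(F+uw)\le k-3 < k-2 = \Delta(T_k^\ast)$, so $F+uw$ contains no copy of $T_k^\ast$ whatsoever and no colouring argument is needed; your reorganisation that handles disconnected $F$ upfront (rather than folding it into the $\Delta(F)=k-2$ analysis) is a clean alternative but not a fundamentally different strategy.
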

\begin{proof}
Let $F$ be a forest on $m$ vertices.
If $m < k$, then for any $e \in E(\overline{F})$, $F+e$ does not contain a copy of $T_k^\ast$, so let $m = k$.
Let $v \in V(F)$ with $d(v) = \Delta(F)$, and let $N(v) = \{v_1,\ldots,v_{\Delta(F)}\}$.

If $\Delta(F) = k-1$, then $F$ is a star, so colouring $F$ with a proper colouring and adding the non-edge $v_1v_2$ in the colour of the edge $vv_3$ results in a rainbow $T_k^\ast$-free proper colouring of $F$, since the edge $v_1v_2$ and the leaf $v_3$ must both be present in any copy of $T_k^\ast$ in $T+v_1v_2$.

If $\Delta(F) = k-2$ and $F$ is disconnected, then there exists a tree component $T$ of $F$ with $\Delta(T) = k-2$, $E(\overline{T}) \neq \emptyset$, and $|V(T)| < k$, so adding a non-edge in $T$ to $F$ does not create a copy of $T_k^\ast$.

If $\Delta(F) = k-2$ and $F$ is a tree, then $F \cong T_k^\ast$, so if $k = 4$, adding an edge between two leaves creates a copy of $C_4$, which can be properly $2$-coloured; otherwise, adding an edge between two leaves $v_i,v_j \in N(v)$ does not create a new copy of $T_k^\ast$ in $F$ ($vv_i$ and $vv_j$ must both be present in a copy $T$ of $T_k^\ast$ in $F$, else $\Delta(T) \le k-3 < \Delta(T_k^\ast)$).

If $\Delta(F) \le k-3$, then if $k = 4$, this gives $m \le 2 < k$, contradicting the assumption $m=k$; otherwise, for two vertices $u,w \in V(F)$ of degree at most $1$, $\Delta(F+uw) \le \max\{\Delta(F),2\} = \Delta(F) \le k-3 < k-2 = \Delta(T_k^\ast)$, so $F+uw$ does not contain a copy of $T_k^\ast$.

Therefore, in all cases, there exists $e \in E(\overline{T})$ and a $T_k^\ast$-free proper colouring of $T+e$, whose restriction to $E(F)\cup \{e\}$ is a rainbow $T_k^\ast$-free proper colouring of $F+e$, as desired.
\end{proof}

In order to prove Theorem~\ref{thm:Tkast rainbow sat} we will apply the following result. Let $\Sat(n,H)$ be the set of $n$-vertex $H$-saturated graphs with $\sat(n,H)$ edges.

\begin{restatable}[J. Faudree, R. Faudree, Gould, Jacobson~\cite{FaudreeFaudreeGouldJacobson2009}]{theorem}{subdividedstarsat} \label{thm:T_k^ast}
For any $k\ge 5$ and $n \ge k+2$, $\sat(n,T_k^*) = n-\lfloor (n+k-2)/k \rfloor$ and $\Sat(n,T_k^*)$ contains a star forest.
\end{restatable}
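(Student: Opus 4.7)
The plan is to establish the edge-count formula and exhibit an extremal star forest in two steps: an upper bound via an explicit construction, and a lower bound via structural analysis of $T_k^*$-saturated graphs.

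\textbf{Upper bound.} Writing $n = qk + r$ with $0 \le r \le k-1$, I would build a star forest $G$ whose components are drawn from $\{K_{1, k-1}, K_{1, k}, K_{1, 1}\}$, arranged to yield exactly $\lfloor (n+k-2)/k \rfloor$ components. For instance, when $r = 0$, take $q$ copies of $K_{1, k-1}$; when $r = 1$, take $q - 1$ copies of $K_{1, k-1}$ together with one $K_{1, k}$; when $r \ge 2$, take $q$ copies of $K_{1, k-1}$ together with one $K_{1, 1}$ (when $r = 2$) or a corresponding combination of a $K_{1,k}$ and a $K_{1,1}$ for larger $r$. Since $G$ is a forest with $c$ components, $|E(G)| = n - c$, matching the claimed value. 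Saturation requires two checks. First, every star $K_{1, m}$ is $T_k^*$-free because its leaves have degree one, so no vertex can serve as the degree-two subdivision vertex in a copy of $T_k^*$. Second, for any non-edge $e \in E(\overline{G})$, a case analysis produces a copy of $T_k^*$ in $G + e$: if $e$ joins two leaves within a $K_{1, m}$ component with $m \ge k - 1$, one leaf becomes the subdivision vertex and the center supplies the remaining $k - 3$ leaves; if $e$ joins vertices in distinct components each of size $\ge k - 1$, the endpoint in one component plays the subdivision vertex and the endpoint in the other plays its leaf; edges incident to the $K_{1, 1}$ component are handled analogously using vertices of a larger star. This mirrors the case analysis of Lemma~\ref{lem:Tkast}.

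\textbf{Lower bound.} Let $G \in \Sat(n, T_k^*)$ with $c$ components. The key structural claims are: (a) any component $C$ with $|V(C)| \le k-1$ must be a clique, since otherwise a non-edge $uv$ inside $C$ would have to create a $T_k^*$ in $G + uv$, but any such copy would lie entirely within $C$, contradicting $|V(C)| < k$; (b) $G$ has at most one isolated vertex, since joining two isolated vertices creates only a $K_2$, which contains no $T_k^*$; and (c) additional saturation constraints force at most one component of size at most $k-1$, since otherwise an inter-component edge between two small cliques could not reach a degree-$(k-2)$ vertex. Combining (a)--(c) yields $c \le \lfloor (n+k-2)/k \rfloor$, and since every component contributes at least $|V(C)| - 1$ edges (with small cliques contributing strictly more), we obtain $|E(G)| \ge n - c \ge n - \lfloor (n+k-2)/k \rfloor$. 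Equality throughout this chain forces every component to be a tree and the component-size distribution to match the construction, proving that $\Sat(n, T_k^*)$ contains a star forest.

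The main obstacle will be the lower bound's careful case analysis, specifically establishing claim (c): one must rule out configurations with multiple small-component cliques by exhibiting an inter-component non-edge whose addition cannot complete a copy of $T_k^*$, either because no vertex of degree $\ge k-2$ is accessible within the merged union or because no degree-two subdivision vertex can be formed from the clique structure. Tracing equality in the bounds then forces the extremal graph to be a star forest of the constructed form.
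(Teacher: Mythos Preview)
This theorem is not proved in the paper at all: it is quoted as a result of J.~Faudree, R.~Faudree, Gould, and Jacobson~\cite{FaudreeFaudreeGouldJacobson2009} and is used as a black box in the proof of Theorem~\ref{thm:Tkast rainbow sat}. There is therefore nothing in the paper to compare your proposal against.

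That said, your outline is broadly in the spirit of the original Faudree--Faudree--Gould--Jacobson argument, but several points are underspecified or not quite right. In the upper bound, your description for $r\ge 3$ (``a corresponding combination of a $K_{1,k}$ and a $K_{1,1}$'') does not cover all residues; you need a uniform recipe producing exactly $\lfloor (n+k-2)/k\rfloor$ components, and you must verify saturation for \emph{every} non-edge, including those incident to the small $K_{1,1}$ component and those joining a center of one star to a leaf or center of another. The reference to Lemma~\ref{lem:Tkast} is misleading here: that lemma is about $K_{1,k}$ components forcing $T_{k+1}^*$, whereas for $T_k^*$-saturation the relevant star components have $k$ vertices, i.e.\ are copies of $K_{1,k-1}$. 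In the lower bound, your claim (c) is the heart of the matter and is asserted rather than argued; showing that two small clique components cannot coexist requires a careful choice of the inter-component non-edge and a check that no vertex of degree $\ge k-2$ is reachable in the resulting component. Finally, your closing sentence about equality forcing a star forest overreaches: the theorem only asserts that $\Sat(n,T_k^*)$ \emph{contains} a star forest, which already follows from the construction, not that every extremal graph is one.
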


We are now ready to prove Theorem~\ref{thm:Tkast rainbow sat}.
\begin{proof}[Proof of Theorem~\ref{thm:Tkast rainbow sat}]
In order to prove that $\prsat(n,T_k^\ast) \le \sat(n,T_{k+1}^\ast)$, let $G \in \Sat(n,T_{k+1}^\ast)$ be a star forest, which exists by Theorem~\ref{thm:T_k^ast}.
Our goal is to show that $G$ is properly rainbow $T_k^\ast$-saturated.
First, note that $G$ does not contain a copy of $T_k^\ast$.

Consider $T_{k+1}^\ast$ with a proper colouring.
Let $v$ be the vertex of degree $k-1$, let $x$ be the vertex not adjacent to $v$, and let $y$ be the neighbour of $x$.
At most one vertex $u \in N(v)$ (not $y$) has $vu$ coloured with the colour of $xy$, so $V(T_{k+1}^\ast)\setminus \{u\}$ induces a rainbow copy of $T_k^\ast$.

Since $G \in \Sat(n,T_{k+1}^\ast)$, for all $e \in E(\overline{G})$, $G+e$ contains a copy of $T_{k+1}^\ast$, which contains a rainbow copy of $T_k^\ast$ in all proper colourings.
Therefore, $G$ is properly rainbow $T_k^\ast$-saturated, proving $\prsat(n,T_k^\ast) \le \sat(n,T_k^\ast)$.

Now we prove that $\prsat(n,T_k^\ast) \ge \sat(n,T_{k+1}^\ast)$.
For a graph $G$, let $c(G)$ be the number of components of $G$.
Take $G \in \Prsat(n,T_k^\ast)$ such that $c(G) = \min\{c(H):H \in \Prsat(n,T_k^\ast)\}$.
We will show that $G$ has few components (and thus many edges) by supposing otherwise and replacing components of $G$ with a star such that the resulting graph has fewer components than $G$, contradicting the minimality of $c(G)$.
\begin{claim} \label{clm:subdivided_star1}
    $G$ does not contain two components each with order at most $k$.
\end{claim}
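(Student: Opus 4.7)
The plan is to contradict either $G \in \Prsat(n,T_k^\ast)$ directly or the minimality of $c(G)$, by producing either a non-edge $e$ of $G$ together with a proper rainbow $T_k^\ast$-free colouring of $G+e$, or a graph $G^\ast \in \Prsat(n,T_k^\ast)$ with $c(G^\ast)<c(G)$. Suppose for contradiction that $H_1, H_2$ are components of $G$ with $|V(H_i)| \le k$. Let $\phi$ be the given proper rainbow $T_k^\ast$-free colouring of $G$, write $F := H_1 \cup H_2$, and set $m := |V(F)|$. The guiding observation throughout is that $T_k^\ast$ is connected, so any rainbow copy in a modified graph must lie in a single component, and any component untouched by the modification retains its piece of $\phi$ and so remains rainbow $T_k^\ast$-free.

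When $m \le k-1$ I take any non-edge $e$ between $V(H_1)$ and $V(H_2)$ and extend $\phi$ by a fresh colour on $e$: the merged component has fewer than $|V(T_k^\ast)|=k$ vertices and so contains no $T_k^\ast$ at all, giving a rainbow $T_k^\ast$-free proper colouring of $G+e$ that contradicts $G \in \Prsat(n,T_k^\ast)$. When $3 \le m \le k$ and $F$ is a forest, Lemma~\ref{lem:kTree_not_Tkast_sat} supplies a non-edge $e \in E(\overline{F})$ and a proper rainbow $T_k^\ast$-free colouring $\psi$ of $F+e$; since $H_1$ and $H_2$ are separate components, no edges of $G$ cross $V(F)$, so patching $\psi$ on $F+e$ with $\phi$ on $V(G)\setminus V(F)$ yields a proper rainbow $T_k^\ast$-free colouring of $G+e$, again contradicting $G\in\Prsat(n,T_k^\ast)$.

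When $m \ge k+1$ I replace $F$ in $G$ by the star $S := K_{1,m-1}$ on $V(F)$ to obtain $G^\ast$ with $c(G^\ast) = c(G)-1$. A rainbow proper colouring of $S$ combined with $\phi$ off $V(F)$ is rainbow $T_k^\ast$-free since a star, having only one non-leaf, contains no $T_k^\ast$. Because $S$ contains $K_{1,k}$ as a subgraph, the proof of Lemma~\ref{lem:Tkast} extends to show that every non-edge of $G^\ast$ incident to $V(F)$ creates a copy of $T_{k+1}^\ast$ in $G^\ast+e$, which contains a rainbow $T_k^\ast$ under any proper colouring by the observation shown earlier in the present proof of Theorem~\ref{thm:Tkast rainbow sat}. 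For a non-edge $e$ of $G^\ast$ with both endpoints outside $V(F)$, any proper colouring of $G^\ast+e$ extends to a proper colouring of $G+e$ by restoring $\phi|_{E(F)}$; the rainbow $T_k^\ast$ forced by $G\in\Prsat(n,T_k^\ast)$ cannot lie inside $F$ (as $\phi$ is rainbow-free there), so it lies in $G^\ast+e$. Thus $G^\ast \in \Prsat(n,T_k^\ast)$ with strictly fewer components than $G$, contradicting minimality.

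The main obstacle I anticipate is the borderline regime $m=k$ when $F$ contains a cycle: Lemma~\ref{lem:kTree_not_Tkast_sat} no longer applies and the star-replacement still needs $m \ge k+1$. I expect to handle this by exploiting that $m=k$ with two non-empty components forces $|V(H_i)| < k$, so if either $H_i$ has a non-edge inside it one can add that edge and keep the affected component of order at most $k-1$, too small to contain $T_k^\ast$; the remaining subcase in which both $H_i$ are complete graphs is dealt with by adding a bridge between them and choosing its colour to repeat a $\phi$-colour on a triangle edge inside one of the $H_i$, forcing every embedding of $T_k^\ast$ in the merged component to have a repeated colour.
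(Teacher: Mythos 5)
Your overall strategy mirrors the paper's: split on $m = |V(H_1)| + |V(H_2)|$, dispose of the small-$m$ case by merging two small components, use Lemma~\ref{lem:kTree_not_Tkast_sat} when $m\le k$ and $F$ is a forest, and otherwise replace $H_1\cup H_2$ by a star on $V(F)$ to contradict minimality of $c(G)$. However, there is a genuine gap in the case $m\ge k+1$. To conclude $G^\ast\in\Prsat(n,T_k^\ast)$ you need $|E(G^\ast)|\le |E(G)|=\prsat(n,T_k^\ast)$, which requires $|E(H_1\cup H_2)|\ge m-1 =|E(K_{1,m-1})|$. But $H_1$ and $H_2$ are two connected components, so a priori $|E(H_1\cup H_2)|\ge m-2$ only, with equality exactly when both are trees (e.g.\ $H_1\cong K_2$, $H_2$ a tree on $k-1$ vertices gives $m=k+1$). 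In that case $G^\ast$ has one more edge than $G$ and the minimality argument collapses. The paper closes exactly this hole: before replacing, it first applies Lemma~\ref{lem:kTree_not_Tkast_sat} to the larger component $Y$ alone --- noting $3\le \lceil (k+1)/2\rceil\le |V(Y)|\le k$ --- to deduce that $Y$ cannot be a tree (else one could add an internal edge of $Y$ and properly colour $G+e$ with no rainbow $T_k^\ast$), hence $|E(X\cup Y)|\ge |V(X)|-1+|V(Y)| = m-1$. You need this step explicitly.

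Two smaller points. First, your star-replacement argument invokes (an extension of) Lemma~\ref{lem:Tkast}, which presupposes $G^\ast$ has no isolated vertex; the paper secures this by choosing $X$ of minimum order, so that any isolated vertex of $G$ is absorbed into the replacement --- you choose $H_1,H_2$ arbitrarily, so this needs to be said. Second, your sketch for the anticipated $m=k$ non-forest case --- ``both $H_i$ complete, add a bridge coloured to repeat a triangle colour'' --- is not yet a proof: since $T_k^\ast$ has $k$ vertices it must span the merged component and hence must use the bridge, but there is no reason the particular triangle edge you recolour appears in every such copy, so forcing a repeated colour requires more care (and in fact the paper avoids this case entirely by routing it, too, through Lemma~\ref{lem:kTree_not_Tkast_sat} and the cyclicity of $X\cup Y$). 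Following the paper and establishing cyclicity up front in both the $m=k$ and $m\ge k+1$ regimes would fill all of these gaps simultaneously.
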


\begin{proof}[Proof of Claim]
    Suppose otherwise for contradiction.
    Let $X$ and $Y$ be distinct components of $G$ such that $|V(X)|\le |V(Y)| \le k$ and $|V(X)|$ is minimal, i.e., $|V(X)| \le |V(C)|$ for any component $C$ of $G$.
    Let $x=|V(X)|$ and $y=|V(Y)|$.
    
    If $x+y < k$, then for some $v \in X$ and $u \in Y$, $(X\cup Y)+vu$ has fewer than $k$ vertices, so adding $vu$ to $G$ does not add a rainbow copy of $T_k^\ast$, a contradiction.
    If $x+y = k$, then by Lemma \ref{lem:kTree_not_Tkast_sat}, $X\cup Y$ is not a forest, i.e., it is cyclic.
    If $x+y \ge k+1$, then $3 \le \lceil\frac{k+1}{2}\rceil \le y \le k$, so by Lemma \ref{lem:kTree_not_Tkast_sat}, $Y$ is not a forest, i.e., it is cyclic.
    Thus in all cases, $X\cup Y$ is cyclic.
    
    Now, removing an edge from a cycle in $X \cup Y$ and adding an edge between $X$ and $Y$ makes $X \cup Y$ connected, meaning that $|E(X\cup Y)| \ge x+y-1$, which is the number of edges on a star on $x+y$ vertices.
    So the graph $G'$ obtained by replacing $X$ and $Y$ with a star on $x+y$ vertices has $|E(G')|\le |E(G)|$ and $c(G') < c(G)$.
    Note that $|V(X)|$ is minimal and $G$ has at most one isolated vertex (else adding an edge between two isolated vertices would not create a copy of $T_k^\ast$), so $G'$ has no isolated vertex, and so by Lemma \ref{lem:Tkast}, $G' \in \Prsat(n,T_k^\ast)$.
    Therefore, this contradicts minimality of $c(G)$.
\end{proof}
\begin{claim} \label{clm:subdivided_star2}
    $G$ does not contain an isolated vertex.
\end{claim}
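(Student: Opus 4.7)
Suppose for contradiction that $G$ contains an isolated vertex $v$. By Claim~\ref{clm:subdivided_star1}, every other component of $G$ has at least $k+1$ vertices, and such a component exists because $n \ge k+3$; fix one and call it $C$. I plan to contradict the minimality of $c(G)$ by constructing $G' \in \Prsat(n,T_k^\ast)$ with $c(G') = c(G)-1$. Define $G'$ on $V(G)$ from $G$ by deleting every edge of $C$ and adding all edges $\{vu : u \in V(C)\}$, so that $V(C)\cup\{v\}$ induces a star $K \cong K_{1,|V(C)|}$ with center $v$ in $G'$ while every other component of $G$ is preserved. Then $|V(G')|=n$ and $c(G') = c(G)-1$, and it remains only to verify the two properly rainbow saturation conditions for $G'$.

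For property~(1), combine any proper coloring of $K$ with the restriction to the remaining components of a rainbow $T_k^\ast$-free proper coloring $\phi$ of $G$ (guaranteed by~(1) for $G$). Since every copy of $T_k^\ast$ has a vertex of degree $2$ but $K$ has only vertices of degrees $1$ and $|V(C)| \ge k+1$, the star $K$ is $T_k^\ast$-free, so the combined coloring is rainbow $T_k^\ast$-free. For property~(2), fix $e \in E(\overline{G'})$ and a proper coloring $\psi$ of $G'+e$. If $e$ is incident to $V(K)$, the case analysis of Lemma~\ref{lem:Tkast} adapts directly: each of the three configurations ($e$ between two leaves of $K$; $e$ from a leaf of $K$ to a vertex outside $V(K)$; or $e$ from $v$ to an outside vertex $y$, using a neighbor of $y$ in $G'$, which exists because no component other than $\{v\}$ was an isolated vertex) yields a copy $T^\ast \cong T_{k+1}^\ast$ in $G'+e$ whose $k-2$ center-leaves may be drawn from the spare leaves of $K$, possible since $|V(C)| \ge k+1$. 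Under $\psi$, the $k-1$ edges of $T^\ast$ at its center are rainbow and the remaining ``far'' edge is properly colored differently from its adjacent center edge; deleting the at-most-one center-leaf whose edge's color matches that of the far edge yields a rainbow $T_k^\ast$ in $(G'+e,\psi)$.

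If instead both endpoints of $e$ lie outside $V(K)$, then $e$ is also a non-edge of $G$. Extend $\psi$ to a proper coloring $\tilde\psi$ of $G+e$ by setting $\tilde\psi|_{E(C)} := \phi|_{E(C)}$, which is proper because $V(C)$ is disjoint from the vertex set of every other edge of $G+e$. By property~(2) for $G$, $(G+e,\tilde\psi)$ contains a rainbow $T_k^\ast$, call it $T$. Since $\phi|_C$ inherits rainbow $T_k^\ast$-freeness from $\phi$ and $v$ remains isolated in $G+e$, the connected subgraph $T$ must avoid $V(C)\cup\{v\}$, so $V(T) \subseteq V(G')\setminus V(K)$; on that set $\tilde\psi$ agrees with $\psi$, making $T$ a rainbow $T_k^\ast$ in $(G'+e,\psi)$. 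The main subtlety is choosing $K$ to balance the two conditions---it must be a large enough star for the proof of Lemma~\ref{lem:Tkast} to adapt in case~(2a), yet remain $T_k^\ast$-free (as a star) so that property~(1) survives---and the crucial move in case~(2b) is the auxiliary recoloring $\tilde\psi$, which restores $C$ to its rainbow $T_k^\ast$-free coloring from $\phi$ so that the rainbow $T_k^\ast$ guaranteed in $G+e$ is forced into the portion of the graph where $\psi$ and $\tilde\psi$ coincide.
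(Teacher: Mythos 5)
Your construction and verification are in the right spirit for the case where $C$ contains a cycle, and your auxiliary-recoloring argument via $\tilde\psi$ for the ``$e$ disjoint from $V(K)$'' case is actually more careful than the paper's terse assertion. However, there is a genuine gap: you never check that $|E(G')| \le |E(G)|$, and that inequality fails precisely when $C$ is a tree. Membership in $\Prsat(n,T_k^*)$ requires having exactly $\prsat(n,T_k^*)$ edges, so for the minimality-of-$c(G)$ argument to bite, $G'$ must have at most as many edges as $G$. If $C$ is a tree on $c := |V(C)| \ge k+1$ vertices, then $|E(C)| = c-1$, while the star $K \cong K_{1,c}$ you replace it with has $c$ edges, so $|E(G')| = |E(G)| + 1 > \prsat(n,T_k^*)$ and $G' \notin \Prsat(n,T_k^*)$. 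The contradiction evaporates.

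The paper handles this by splitting on whether $C$ is cyclic or a tree. In the cyclic case $|E(C)| \ge c$, so the star replacement does not increase the edge count and the argument you give goes through. In the tree case the paper uses a completely different argument that derives a contradiction directly from $G$ being properly rainbow $T_k^*$-saturated: if $C$ is a non-star tree with $\Delta(C)\ge k-1$ then $C$ already contains $T_{k+1}^\ast$, forcing a rainbow $T_k^\ast$ under every proper colouring of $G$, contradicting condition~(1); if $C$ is a star with universal vertex $u$, then adding $vu$ yields a larger star, which has no copy of $T_k^\ast$ at all, contradicting condition~(2); and if $C$ is a tree with $\Delta(C)\le k-2$, then adding $vt$ for a leaf $t$ of $C$ preserves maximum degree at most $k-2$, so the resulting component can be properly coloured with $k-2 < |E(T_k^\ast)|$ colours and hence has no rainbow $T_k^\ast$, again contradicting condition~(2). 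Your proof needs this (or an equivalent) analysis for the tree case.
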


\begin{proof}[Proof of Claim]
    Suppose otherwise for contradiction.
    Let $v$ be the isolated vertex in $G$.
    Let $C$ be a component of $G-v$.
    By the claim above, $|V(C)| \ge k+1$.
    If $C$ has a cycle, then replace $C$ and $v$ with a star $S$ on $|V(C)|+1$ vertices to create a new graph $G'$.
    $|E(C)|\ge |V(C)|$, which is the number of edges on a star on $|V(C)|+1$ vertices.
    Since $G'$ has no isolated vertex, by Lemma \ref{lem:Tkast}, for any non-edge $e \in E(\overline{G})$ incident to a vertex in $S$, $G+e$ contains a copy of $T_{k+1}^\ast$, which contains a rainbow copy of $T_k^\ast$ in all proper colourings.
    Since $G \in \Prsat(n,T_k^\ast)$, adding any other non-edge to $G'$ results in a rainbow copy of $T_k^\ast$ in all proper colourings.
    So $G'$ is properly rainbow $T_k^\ast$-saturated.
    $|E(G')| \le |E(G)|$, so $G' \in \Prsat(n,T_k^\ast)$.
    But $c(G')<c(G)$, contradicting minimality of $c(G)$.
    
    So we may suppose that $C$ is a tree.
    But then if $C$ is a non-star with $\Delta(C)\ge k-1$, then it contains a copy of $T_{k+1}^\ast$ and thus forces a rainbow copy of $T_k^\ast$.
    If $C$ is a star, then if $u$ is the universal vertex of $C$, adding the edge $uv$ creates another star, which does not contain $T_k^\ast$.
    Otherwise, let $t$ be a leaf of $C$, and add the edge $vt$ to $G$.
    The resulting component $C'$ on vertices $V(C)\cup \{v\}$ is a tree with $\Delta(C') = \Delta(C) \le k-2$, so it can be properly coloured with $k-2 < |E(T_k^\ast)|$ colours.
    Therefore, all cases result in a contradiction.
\end{proof}
By the above two claims, every component of $G$ has order at least $2$ and at most one component of $G$ has order at most $k$.
So $n \ge (k+1)(c(G)-1)+2$, so rearranging, $c(G) \le \lfloor \frac{n-2}{k+1} \rfloor + 1 = \lfloor \frac{n-2+(k+1)}{k+1}\rfloor$.
$|E(G)| \ge n-c(G)$ because each component $C$ of $G$ has at least $|V(C)|-1$ edges.
Combining these inequalities yields $|E(G)| \ge n-\lfloor \frac{n-2+(k+1)}{k+1}\rfloor = \sat(n,T_{k+1}^\ast)$ by Theorem \ref{thm:T_k^ast}.
\end{proof}

\subsection{Double stars}
The \emph{double star} $S_{t+1,s+1}$ is constructed by appending $t$ and $s$ pendant vertices, respectively, to each vertex in $K_2$.
J. Faudree, R. Faudree, Gould, and Jacobson~\cite{FaudreeFaudreeGouldJacobson2009} prove the following bounds on the (standard) saturation number for double stars.

\begin{restatable}[J. Faudree, R. Faudree, Gould, Jacobson~\cite{FaudreeFaudreeGouldJacobson2009}]{theorem}{doublestarsat} \label{thm:faudree_double_star_sat}
For $t \ge s \ge 1$ and $n \ge (s+1)^3$,
\[\frac{s}{2}n \le \sat(n,S_{t+1,s+1}) \le \begin{cases}
    \frac{s}{2}n+\frac{t(t+2)}{2}, & t=s, \\
    \frac{s+1}{2}n-\frac{s^2+8}{8}, & t>s.
\end{cases} \]
\end{restatable}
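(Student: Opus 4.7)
The plan is to handle the lower and upper bounds separately.

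For the lower bound $\tfrac{s}{2}n \le \sat(n,S_{t+1,s+1})$, the aim is to show that every $S_{t+1,s+1}$-saturated graph $G$ on $n \ge (s+1)^3$ vertices has $\delta(G) \ge s$, which yields $|E(G)| \ge sn/2$ by handshaking. Suppose for contradiction that some $v \in V(G)$ satisfies $d_G(v) \le s-1$. For every non-neighbour $w$ of $v$, saturation produces a copy $D_w$ of $S_{t+1,s+1}$ in $G+vw$ using the edge $vw$. Every non-leaf of $S_{t+1,s+1}$ has degree at least $s+1$, so since $d_{G+vw}(v) \le s$ the vertex $v$ must be a leaf of $D_w$. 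Consequently $w$ is a central vertex of $D_w$, forcing $d_G(w) \ge s$, and $w$ must have a $G$-neighbour $w'$ playing the opposite central role (with $d_G(w') \ge s$, and in fact $\ge t$ if $w$ plays the $(s+1)$-central role). The assumption $n \ge (s+1)^3$ leaves many non-neighbours of $v$, and I would aim to choose $w$ of minimum degree among them whose $G$-neighbourhood contains no vertex of degree $\ge t$; for this $w$, no copy of $S_{t+1,s+1}$ can be formed in $G+vw$ using the added edge, contradicting saturation.

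For the upper bound, I would supply explicit constructions depending on whether $t=s$ or $t>s$. When $t=s$, take an $s$-regular graph $H$ of girth at least five on as many vertices as possible (for instance, a suitable Cayley graph or random regular graph), together with a small attached gadget contributing $\tfrac{t(t+2)}{2}$ corrective edges to handle the residue modulo $2$ of $sn$ and any parity obstruction. Such $H$ is $S_{s+1,s+1}$-free by degree considerations, and every non-edge $vw$ of $H$ completes a copy of $S_{s+1,s+1}$: both endpoints gain degree $s+1$ and the girth condition forces $N_G(v)$ and $N_G(w)$ to be disjoint, supplying the $2s$ required leaves. When $t>s$, an $s$-regular host fails to saturate because an added edge only raises endpoint degrees to $s+1 < t+1$, so one instead uses an almost-$(s+1)$-regular construction; the subtractive term $\tfrac{s^2+8}{8}$ in the stated bound reflects the parity and boundary adjustments needed to simultaneously preserve $S_{t+1,s+1}$-freeness and guarantee saturation.

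The principal obstacle will be verifying the saturation property of the construction: it is routine to check $S_{t+1,s+1}$-freeness from the degree sequence, but proving that every missing edge completes a copy of the double star requires a detailed case analysis of the local structure at the non-edge's endpoints, tracking the distribution of high-degree vertices within their private and common neighbourhoods. A secondary difficulty is the borderline case $d_G(v)=s-1$ in the lower-bound argument, where $v$ could in principle play the $(s+1)$-central role in a double star on $vw$ rather than a leaf; this case requires a more delicate selection of $w$ than when $d_G(v) \le s-2$, and is where the quantitative hypothesis $n \ge (s+1)^3$ is most strongly used.
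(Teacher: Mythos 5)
This statement is not proved in the paper: it is cited verbatim from J.~Faudree, R.~Faudree, Gould, and Jacobson, and the authors use it only as background and as a comparison point for their own Theorem~\ref{thm:double_star_sat_improved}. So there is no internal proof to compare your sketch against; I can only assess it on its own terms, and both halves have genuine problems.

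\textbf{Lower bound.} You set out to show $\delta(G)\ge s$ for any $S_{t+1,s+1}$-saturated $G$, then deduce $|E(G)|\ge sn/2$ by handshaking. The intermediate observation is fine: if $d_G(v)\le s-1$, then for each non-neighbour $w$ the added edge $vw$ makes $v$ a leaf of the forced copy $D_w$, so $w$ must be a centre, giving $d_G(w)\ge s$ and forcing $w$ to have a $G$-neighbour of degree $\ge s$ or $\ge t$. But this only constrains non-neighbours of $v$ --- it does not contradict anything, and in particular it does not prevent $v$ itself from persisting with low degree. Your proposed fix (``choose $w$ of minimum degree among non-neighbours whose $G$-neighbourhood contains no vertex of degree $\ge t$'') is incomplete on two counts: you have not shown such a $w$ exists, and even if it does, $D_w$ could still be formed with $w$ in the $(s+1)$-centre role using a neighbour of degree $\ge t$ that you have not excluded, or with the central edge of $D_w$ not incident to $v$ or $w$ at all. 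The Faudree et al.\ bound $\sat(n,T)\ge\tfrac{\delta_2(T)-1}{2}n$ is a global estimate on the degree sequence, not a pointwise minimum-degree bound, and the cube hypothesis $n\ge(s+1)^3$ enters precisely to control the exceptional low-degree set rather than to eliminate it.

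\textbf{Upper bound, $t=s$.} The construction is wrong. You claim that in an $s$-regular graph $H$ of girth at least five, a non-edge $vw$ satisfies $N_H(v)\cap N_H(w)=\emptyset$. Girth $\ge 5$ only forbids \emph{two} common neighbours (a $C_4$); a single common neighbour is allowed and occurs at every non-adjacent pair at distance $2$. If $|N_H(v)\cap N_H(w)|=1$, then in $H+vw$ the only two vertices of degree $>s$ are $v$ and $w$, forcing them to be the centres of any $S_{s+1,s+1}$, but $|\{v,w\}\cup N_H(v)\cup N_H(w)| = 2+(2s-1)=2s+1 < 2s+2 = |V(S_{s+1,s+1})|$, so no copy can be formed. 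Hence $H$ is not $S_{s+1,s+1}$-saturated, and the $\tfrac{s}{2}n+O(1)$ count you would extract from $H$ cannot be justified this way. For $t>s$, ``an almost-$(s+1)$-regular construction'' is not specific enough to verify either freeness or saturation; note that the paper's own Construction~\ref{cns:double_star} (a disjoint union of joins $K_1+kK_s$, which is far from regular) is what underlies the paper's improved $t>s$ bound in Theorem~\ref{thm:double_star_sat_improved}, and verifying saturation there is a short but essential argument you would need an analogue of.
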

We have the following result that improves the upper bound Theorem~\ref{thm:faudree_double_star_sat} in the case $t>s$ and provides analogous bounds on the proper rainbow saturation number for double stars.
\begin{restatable}{theorem}{doublestarsatimproved} \label{thm:double_star_sat_improved}
Let $t \ge s \ge 1$.
Then for all $n$,
\begin{enumerate}
    \item $\sat(n,S_{t+1,s+1}) \le \frac{(\cf{t+1}{s}+1)s}{(\cf{t+1}{s}+1)s+1}\cdot \frac{s+1}{2}n+O(1)$.
    \item $\frac{s}{2}n \le \prsat(n,S_{t+1,s+1}) \le \frac{(\cf{t+1}{s}+2)s}{(\cf{t+1}{s}+2)s+1}\cdot \frac{s+1}{2}n+O(1)$.
\end{enumerate}
\end{restatable}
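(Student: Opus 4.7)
The plan is to prove the three bounds separately: the two upper bounds via a common gadget construction, and the lower bound via a fresh-colour extension argument.

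For the upper bounds, let $k = \cf{t+1}{s}$, so $ks \ge t+1$, and define the gadget $H_m$ to be the graph obtained by taking $m$ disjoint copies of $K_{s+1}$ and identifying one distinguished vertex (the \emph{hub} $h$) across all copies. Thus $|V(H_m)| = ms + 1$, $|E(H_m)| = m\binom{s+1}{2}$, $d(h) = ms$, and every non-hub vertex has degree exactly $s$. The only vertex of $H_m$ with degree $\ge s+1$ is $h$, and all of $h$'s neighbours have degree $s$, so $H_m$ contains no copy of $S_{t+1,s+1}$; this immediately yields condition (1) for both parts (using any proper edge-colouring for Part~2). I would take $G$ to be the disjoint union of $\ff{n}{ms+1}$ copies of $H_m$ together with a bounded-size filler on the $O(1)$ leftover vertices, setting $m = k+1$ for Part~1 and $m = k+2$ for Part~2, which yields $|E(G)| = \frac{ms}{ms+1}\cdot\frac{s+1}{2}\,n + O(1)$, matching the claimed bounds.

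The heart of each upper bound is the saturation check for a non-edge $uv$ lying in a single copy of $H_m$; cross-gadget and gadget-to-filler non-edges are analogous and slightly easier. Since the hub is universal inside its copy and each $K_{s+1}$-piece is complete, $u$ and $v$ must be non-hubs in distinct $K_{s+1}$-pieces. I would exhibit a copy of $S_{t+1,s+1}$ with central edge $uh$ in which $u$ is the $(s+1)$-centre and $h$ is the $(t+1)$-centre; the $s$ leaves at $u$ are then forced to be the $s-1$ clique-mates of $u$ in its $K_{s+1}$-piece together with $v$. The $t$ leaves at $h$ are chosen from $N(h)\setminus(\{u\}\cup \{u\text{-leaves}\})$, a set of $(m-1)s - 1$ vertices. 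For Part~1 with $m=k+1$ this gives $ks - 1 \ge t$ available vertices, as needed. For Part~2 with $m=k+2$, among the $(k+1)s - 1$ distinct colours on the corresponding $h$-edges, at most $s$ can coincide with the $s$ colours on the forced $u$-leaf edges, leaving at least $(k+1)s - 1 - s = ks - 1 \ge t$ rainbow-compatible choices, which produces a rainbow $S_{t+1,s+1}$ in every proper colouring of $G + uv$.

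For the lower bound $\prsat(n,S_{t+1,s+1}) \ge \frac{s}{2}n$, I would show that $V_{<s} := \{v \in V(G) : d_G(v) < s\}$ induces a clique in $G$; then each $v \in V_{<s}$ satisfies $|V_{<s}| - 1 \le d_G(v) \le s - 1$, so $|V_{<s}| \le s$ and hence $2|E(G)| \ge s(n - |V_{<s}|) \ge sn - s^2$. To see $V_{<s}$ is a clique, suppose for contradiction that $v_1, v_2 \in V_{<s}$ with $v_1v_2 \notin E(G)$. Take a rainbow $S_{t+1,s+1}$-free proper colouring $\phi$ of $G$ and extend it to a proper colouring $\phi'$ of $G + v_1v_2$ by assigning $v_1v_2$ a colour not used by $\phi$. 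Any rainbow $S_{t+1,s+1}$ in $(G+v_1v_2,\phi')$ must use $v_1v_2$; but $d_{G+v_1v_2}(v_i) \le s$, while every non-leaf vertex of $S_{t+1,s+1}$ has degree at least $s+1$, so both $v_1$ and $v_2$ would have to be leaves of the copy, contradicting the fact that one endpoint of the edge $v_1v_2$ in $S_{t+1,s+1}$ is a centre.

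The main obstacle will be the bookkeeping in the upper-bound verification: for every proper colouring of $G + uv$ (handling the within-gadget, cross-gadget, and gadget-to-filler cases, with mild care for the boundary case $t = s$ in which both centres of $S_{t+1,s+1}$ have the same degree) one must verify simultaneously that the constructed double star is vertex-disjoint, uses the correct centre-leaf assignment, and is rainbow, and moreover that a sufficiently small filler on leftover vertices can be chosen to preserve the $O(1)$ correction without producing any bad non-edges. The lower-bound argument, by contrast, is remarkably clean because the fresh-colour extension reduces the rainbow problem to a purely structural impossibility about the degrees at $v_1$ and $v_2$.
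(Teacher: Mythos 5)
Your construction is identical to the paper's: the gadget $K_1 + mK_s$ with $m = \cf{t+1}{s}+1$ for part (1) and $m = \cf{t+1}{s}+2$ for part (2), taken in a disjoint union to tile $n$. What differs is the verification and the lower bound. The paper verifies the rainbow bound in (2) by a reduction: it notes that $G(n,t+s,s)$ is $S_{t+s+1,s+1}$-saturated in the classical sense, and that any proper colouring of a single copy of $S_{t+s+1,s+1}$ already contains a rainbow $S_{t+1,s+1}$ (since at most $s$ of the $t+s$ leaf-edges at the big centre can collide with colours at the small centre). This packages your colour-counting into one clean lemma and makes the appearance of ``$+2$'' transparent, since $\cf{(t+s)+1}{s} = \cf{t+1}{s}+1$, while your direct count at each non-edge is equivalent but requires more bookkeeping across the within-gadget / cross-gadget / hub-vs-non-hub cases you flag. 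For the lower bound, the paper simply invokes the semi-saturation extension (Proposition~\ref{prop:Faudree_tree_gen_lower_prsat}) of Faudree--Faudree--Gould--Jacobson's Theorem~\ref{thm:Faudree_tree_gen_lower} with $\delta_2(S_{t+1,s+1}) = s+1$; your fresh-colour argument showing $V_{<s}$ is a clique is a genuinely independent and quite clean alternative, and it gives the right idea for why this bound holds.

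Two small things to watch. First, your clique argument yields only $2|E(G)| \ge s(n-|V_{<s}|) + |V_{<s}|(|V_{<s}|-1)$ with $|V_{<s}|\le s$, i.e.\ $\prsat(n,S_{t+1,s+1}) \ge \tfrac{s}{2}n - O(s^2)$, whereas the theorem asserts $\tfrac{s}{2}n$ with no correction term; to recover the exact constant you would need the sharper count in~\cite{FaudreeFaudreeGouldJacobson2009} (or simply cite Proposition~\ref{prop:Faudree_tree_gen_lower_prsat} as the paper does). Second, ``a bounded-size filler on the $O(1)$ leftover vertices'' needs to be made precise: a too-small filler component cannot be $S_{t+1,s+1}$-saturated, and a remainder not divisible by $s$ cannot be absorbed into a larger $K_1+kK_s$. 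The paper handles this by mixing components of sizes $ms+1$ and $(m+1)s+1$, using $\gcd(ms+1,(m+1)s+1)=1$ to tile $n$ exactly for all large $n$, which is the cleanest way to close that gap.
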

In Theorem~\ref{thm:double_star_sat_improved}(2), when $s=1$, we have $S_{t+1,s+1}=S_{t+1,2}=T_{t+3}^*$, and the upper bound $\prsat(n,S_{t+1,2}^*)\le \frac{t+3}{t+4}n+O(1)$ recovers the value of $\prsat(n,T_{t+3}^*)$ from Theorem~\ref{thm:Tkast rainbow sat} up to the addition of a constant term.

For a non-regular graph $H$, let $\delta_2(H)$ denote the second smallest degree of $H$, i.e., $\delta_2(H) = \min(\{d(v):v\in V(H)\}\setminus\{\delta(H)\})$.
We have the following result by J. Faudree, R. Faudree, Gould, and Jacobson~\cite{FaudreeFaudreeGouldJacobson2009} bounding the standard saturation number for trees based on the second smallest degree.
\begin{theorem}[J. Faudree, R. Faudree, Gould, Jacobson~\cite{FaudreeFaudreeGouldJacobson2009}] \label{thm:Faudree_tree_gen_lower}
If $T$ is a non-star tree with $|V(T)|\ge 5$, then for all $n\ge (\delta_2(T)-1)^3$, $\sat(n,T)\ge \frac{\delta_2(T)-1}{2}n$.
\end{theorem}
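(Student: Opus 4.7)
The plan is to establish, for any $T$-saturated graph $G$ on $n$ vertices, a structural dichotomy: every vertex has large degree, except possibly for a small clique of ``low-degree'' vertices, whose size is controlled by $\delta_2(T)-1$. Once this is in hand, a degree-sum computation delivers the desired lower bound on $|E(G)|$. Set $d := \delta_2(T)-1$ and let $L := \{v \in V(G) : d_G(v) \le d-1\}$.

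The crucial structural step is the claim that $L$ induces a clique in $G$. Suppose, for contradiction, that $u,v \in L$ satisfy $uv \notin E(G)$. Since $G$ is $T$-saturated, $G+uv$ contains a copy $T'$ of $T$ that must use the new edge $uv$, so there is an isomorphism $\phi : T \to T'$ with $\{\phi^{-1}(u),\phi^{-1}(v)\}$ forming some edge $ab$ of $T$. Then
\[ d_T(a) \le d_{T'}(u) \le d_G(u)+1 \le d < \delta_2(T), \]
and by the definition of $\delta_2$ this forces $d_T(a) = \delta(T) = 1$; i.e., $a$ is a leaf of $T$. The same argument gives that $b$ is a leaf. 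But $T$ is a tree on $|V(T)| \ge 5 \ge 3$ vertices, so no two leaves of $T$ are adjacent, a contradiction.

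With $L$ a clique established, every $v \in L$ satisfies $d_G(v) \ge |L|-1$, while also $d_G(v) \le d-1$ by definition; hence $|L| \le d$. Summing degrees then yields
\[ 2\,|E(G)| = \sum_{v \in V(G)} d_G(v) \ge d\,(n-|L|) + |L|(|L|-1) = dn - |L|(d+1-|L|). \]
Since $0 \le |L| \le d$, the correction term $|L|(d+1-|L|)$ is bounded by a constant depending only on $T$ (at most $(d+1)^2/4$). The hypothesis $n \ge (\delta_2(T)-1)^3$ then ensures this correction is absorbed, yielding $|E(G)| \ge \tfrac{d}{2}n$ in the stated asymptotic sense.

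The main obstacle is the proof of the structural clique claim, which uses in an essential way both the definition of $\delta_2$ (to force low-degree $T$-images to be leaves) and the non-star hypothesis $|V(T)| \ge 5$ (to rule out adjacent leaves in $T$). The remaining degree-sum computation is standard; the one delicate point is the management of the additive constant $|L|(d+1-|L|)$, which is why the bound is stated under the assumption $n \ge (\delta_2(T)-1)^3$ so that the main term $\tfrac{d}{2}n$ dominates.
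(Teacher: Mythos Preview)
The paper does not prove this theorem; it is quoted from Faudree, Faudree, Gould, and Jacobson, so there is no in-paper proof to compare against directly.

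Your structural claim---that the set $L$ of vertices of degree at most $d-1$ forms a clique---is correct and is indeed the core idea. However, your final step has a real gap. From $|L|\le d$ and the degree sum you only obtain
\[
|E(G)| \;\ge\; \frac{d}{2}\,n \;-\; \frac{|L|(d+1-|L|)}{2} \;\ge\; \frac{d}{2}\,n \;-\; \frac{(d+1)^2}{8},
\]
which is $\frac{d}{2}n - O_T(1)$, not $\frac{d}{2}n$. Your claim that the hypothesis $n\ge d^3$ ``absorbs'' this correction is unjustified: a subtractive constant does not disappear for large $n$, and the theorem is stated as an exact inequality, not an asymptotic one.

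In fact your argument \emph{cannot} yield the exact bound as written, because you never use the non-star hypothesis. For the star $T=K_{1,k}$ one has $\delta_2(T)=k$, yet by the K\'aszonyi--Tuza formula (quoted later in the paper) $\sat(n,K_{1,k})=\frac{k-1}{2}n-\frac12\lfloor k^2/4\rfloor<\frac{k-1}{2}n$ for large $n$. Any argument that applies verbatim to stars can therefore give at best $\frac{d}{2}n-O(1)$. To reach the exact inequality one must exploit the non-star assumption---typically by analysing more carefully how the clique $L$ sits inside $G$ (for instance, forcing vertices of $L$ to have neighbours of degree strictly exceeding $d$, or ruling out certain small components). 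That additional structural work, rather than the degree-sum bookkeeping, is where the hypotheses $n\ge d^3$ and ``non-star'' genuinely enter.
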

For graphs $G$ and $H$, say $G$ is \emph{$H$-semi-saturated} if adding any edge to $G$ creates a copy of $H$ (but $G$ is not necessarily $H$-free).
Let $\ssat(n,H)$ denote the minimum number of edges in an $H$-semi-saturated graph on $n$ vertices.
Note that any $H$-saturated graph and any properly rainbow $H$-saturated graph is necessarily $H$-semi-saturated, so we have $\sat(n,H)\ge \ssat(n,H)$ and $\prsat(n,H)\ge \ssat(n,H)$. 

We observe that the proof of Theorem~\ref{thm:Faudree_tree_gen_lower} gives a lower bound on the number of edges in a $T$-saturated graph $G$ without using the property that $G$ is $T$-free.
Thus, Theorem~\ref{thm:Faudree_tree_gen_lower} can be generalized to a lower bound on $\ssat(n,T)$, which in turn gives a lower bound on $\prsat(n,T)$.
\begin{proposition} \label{prop:Faudree_tree_gen_lower_prsat}
If $T$ is a non-star tree with $|V(T)|\ge 5$, then for all $n\ge (\delta_2(T)-1)^3$, $\prsat(n,T) \ge \ssat(n,T)\ge \frac{\delta_2(T)-1}{2}n$.
\end{proposition}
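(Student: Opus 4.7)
The plan is to prove the chain of inequalities $\prsat(n,T)\ge \ssat(n,T)\ge \frac{\delta_2(T)-1}{2}n$ in two independent steps.

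The first inequality $\prsat(n,T)\ge \ssat(n,T)$ is essentially a definitional observation. Suppose $G$ is properly rainbow $T$-saturated and $e\in E(\overline{G})$. By Vizing's theorem (or trivially, by using distinct colours on every edge) some proper edge-colouring of $G+e$ exists, and by the definition of properly rainbow $T$-saturated it contains a rainbow copy of $T$. In particular, $G+e$ contains a copy of $T$, so $G$ is $T$-semi-saturated, and hence $|E(G)|\ge \ssat(n,T)$. Minimising over $G\in \Prsat(n,T)$ gives the claim.

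For the second inequality, my plan is to revisit the proof of Theorem~\ref{thm:Faudree_tree_gen_lower} by J.~Faudree, R.~Faudree, Gould, and Jacobson and verify that the argument uses only the semi-saturation property — that is, that for each non-edge $uv$ of $G$ the graph $G+uv$ contains a copy of $T$ — and never uses the $T$-freeness of $G$ itself. At a high level, their argument studies low-degree vertices of a $T$-saturated graph: for a vertex $v$ with $d_G(v)<\delta_2(T)-1$, adding an edge at $v$ forces a copy of $T$ in which $v$ must play the role of a vertex of degree at most $\delta_2(T)-1$, i.e., a leaf of $T$. This yields structural constraints on the neighbourhoods of low-degree vertices, and a counting argument, valid once $n\ge (\delta_2(T)-1)^3$, shows that too few vertices can have degree less than $\delta_2(T)-1$ for $G$ to have fewer than $\frac{\delta_2(T)-1}{2}n$ edges. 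I would reproduce the argument in outline, flagging at each step that the only input is the existence of a copy of $T$ in $G+uv$, so the identical conclusion holds for the larger class of $T$-semi-saturated graphs.

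The main obstacle is therefore expository rather than mathematical: a careful line-by-line check that no step in the Faudree--Faudree--Gould--Jacobson argument implicitly uses $T$-freeness (for example, by excluding a local configuration on the grounds that it would already force a copy of $T$ inside $G$). I do not expect such uses to appear, since the analysis centres on what $G+uv$ looks like rather than on $G$ itself, but the verification must be done carefully to justify the remark that precedes the proposition statement.
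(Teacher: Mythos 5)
Your proposal is correct and takes essentially the same approach as the paper: the paper likewise observes that any properly rainbow $T$-saturated graph is $T$-semi-saturated (giving $\prsat(n,T)\ge \ssat(n,T)$), and that the Faudree--Faudree--Gould--Jacobson proof of the bound $\sat(n,T)\ge \frac{\delta_2(T)-1}{2}n$ never invokes $T$-freeness, so it applies verbatim to semi-saturated graphs.
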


Since $\delta_2(S_{t+1,s+1})=s+1$, the lower bound in Theorem~\ref{thm:double_star_sat_improved}(2) follows from Proposition~\ref{prop:Faudree_tree_gen_lower_prsat}.
In order to prove Theorem~\ref{thm:double_star_sat_improved}, we have the following construction.
\begin{construction} \label{cns:double_star}
Let $t\ge s \ge 1$, let $m=m(t,s):=\cf{t+1}{s}+1$, and let $G$ be a disjoint union of graphs of the form $K_1+kK_s$, where $k \ge m$.
Then $G$ is $S_{t+1,s+1}$-saturated.
\end{construction}
\begin{proof}
$G$ is $S_{t+1,s+1}$-free, since each component has only one vertex of degree at least $s+1$, and $S_{t+1,s+1}$ has two such vertices.
For all $uv \in E(\overline{G})$, one of $u$ or $v$ is adjacent in $G+uv$ to a vertex $w$ with $d_G(w) \ge ms$; assume this applies to $u$.
Then $G+uv$ contains a copy $S$ of $S_{t+1,s+1}$, where $w$ and $u$ are the vertices of degree $t+1$ and $s+1$ in $S$, respectively.
\end{proof}

We are now ready to prove Theorem~\ref{thm:double_star_sat_improved}.
\begin{proof}[Proof of Theorem~\ref{thm:double_star_sat_improved}]
The lower bound in (2) follows from Theorem~\ref{thm:Faudree_tree_gen_lower}, so it suffices to prove the upper bounds in (1) and (2).
As in Construction~\ref{cns:double_star}, let $m=m(t,s):=\cf{t+1}{s}+1$.
For any $d \in \bb{N}$, $d$ divides $ms+1$ and $(m+1)s+1$ only if it divides the integer linear combination $(m+1)(ms+1)-m((m+1)s+1)=1$.
Thus, $\gcd(ms+1,(m+1)s+1)=1$.
Because we have taken $n$ to be large enough, write $n=a(ms+1)+b((m+1)s+1)$ for non-negative integers $a$ and $b$ with $b$ as small as possible; note that $b=O(1)$.

Let $G=G(n,t,s):=a(K_1+mK_s)\cup b(K_1+(m+1)K_s)$.
By Construction~\ref{cns:double_star}, $G$ is properly rainbow $S_{t+1,s+1}$-saturated, so
\[ \sat(n,S_{t+1,s+1})\le |E(G)| = \frac{ms}{ms+1}\cdot \frac{s+1}{2}n+O(1), \]
proving (1).

To prove (2), note that $G(n,t+s,s)$ is $S_{t+1,s+1}$-free and $S_{t+s+1,s+1}$-saturated, and $S_{t+s+1,s+1}$ contains a rainbow copy of $S_{t+1,s+1}$ in any proper colouring, so $G(n,t+s,s)$ is properly rainbow $S_{t+1,s+1}$-saturated.
Therefore,
\[ \prsat(n,S_{t+1,s+1})\le |E(G(n,t+s,s))| = \frac{m(t+s,s)s}{m(t+s,s)s+1}\cdot \frac{s+1}{2}n+O(1), \]
as desired.
\end{proof}

\section{Concluding remarks}\label{sec:concl}

In this paper, we have characterized the asymptotic behaviour of $\prsat(n,T)$ whenever $T$ is a caterpillar with central path $v_1,\ldots,v_\ell$ such that $d(v_\ell)=2$ and either $\ell \neq 3$ or $d(v_2)=2$.
While we have not mentioned the case when $T$ is a star, in fact, a star $K_{1,k}$ is rainbow in any proper colouring, so a graph $G$ is properly rainbow $K_{1,k}$-saturated if and only if it is $K_{1,k}$-saturated.
Since the number $\sat(n,K_{1,k})$ has been determined exactly by K\'aszonyi and Tuza~\cite{Kaszonyi1986}, we have the following.
\begin{proposition}[see~\cite{Kaszonyi1986}]
    For all $k \in \bb{N}$ and $n\ge k+1$, 
    \[\prsat(n,K_{1,k}) = \sat(n,K_{1,k}) = \begin{cases}
        \frac{k-1}{2}n-\frac12 \ff{k^2}{4}, & n \ge k+\ff{k}{2}, \\
        \binom{k}{2}+\binom{n-k}{2}, & k+1\le n \le k+\ff{k}{2}.
    \end{cases}\]
\end{proposition}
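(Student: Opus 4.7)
The plan is to show that $\prsat(n,K_{1,k})$ and $\sat(n,K_{1,k})$ coincide by arguing that properly rainbow $K_{1,k}$-saturation is the same condition as ordinary $K_{1,k}$-saturation; the explicit formula then follows immediately from the classical result of K\'aszonyi and Tuza~\cite{Kaszonyi1986}.

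First I would record the key observation: all $k$ edges of $K_{1,k}$ share its centre vertex, so in any proper edge-colouring they must receive $k$ pairwise distinct colours. Hence every copy of $K_{1,k}$ in a properly edge-coloured graph is automatically rainbow. This gives the following equivalence for any graph $G$: a proper edge-colouring $\phi$ of $G$ contains a rainbow $K_{1,k}$ if and only if $G$ contains $K_{1,k}$ as a subgraph, independent of the choice of $\phi$.

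Applying this equivalence to each of the two defining conditions of properly rainbow $K_{1,k}$-saturation: condition (1), that some proper colouring of $G$ contains no rainbow $K_{1,k}$, becomes the statement that $G$ is $K_{1,k}$-free (and any proper colouring witnesses this, e.g.\ one produced by Vizing's theorem); and condition (2), that every proper colouring of $G+e$ contains a rainbow $K_{1,k}$ for each $e \notin E(G)$, becomes the statement that $G+e$ contains a copy of $K_{1,k}$. Together these are precisely the two conditions defining $K_{1,k}$-saturation, so the classes of properly rainbow $K_{1,k}$-saturated graphs and of $K_{1,k}$-saturated graphs coincide. In particular $\prsat(n,K_{1,k}) = \sat(n,K_{1,k})$, and the piecewise formula is then exactly~\cite{Kaszonyi1986}.

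There is no real obstacle: the entire content is the single observation that a star is rainbow under every proper colouring, which collapses the rainbow-saturation problem to the classical one.
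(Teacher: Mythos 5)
Your proof is correct and takes exactly the same approach as the paper: the paper also observes that a star is rainbow under every proper edge-colouring, deduces that properly rainbow $K_{1,k}$-saturation coincides with ordinary $K_{1,k}$-saturation, and then cites K\'aszonyi--Tuza for the formula.
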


A natural next step is extending these results to further families of trees.
\begin{problem} \label{prb:caterpillar}
    Determine $\liminf_{n\to \infty}\frac{\prsat(n,T)}{n}$ and $\limsup_{n\to \infty}\frac{\prsat(n,T)}{n}$ for arbitrary trees $T$.
\end{problem}

Proposition~\ref{prop:Faudree_tree_gen_lower_prsat}, which is the most general known lower bound on the proper rainbow saturation number for trees, bounds $\prsat(n,T)$ based on the second smallest degree $\delta_2(T)$.
By Theorem~\ref{thm:faudree_double_star_sat}, the corresponding bound on $\sat(n,T)$ in Theorem~\ref{thm:Faudree_tree_gen_lower} is asymptotically tight when $T$ is a symmetric double star $S_{t+1,t+1}$.
However, the bound in Proposition~\ref{prop:Faudree_tree_gen_lower_prsat} on $\prsat(n,T)$ is not known to be asymptotically tight for any tree $T$: every tree $T$ for which asymptotically tight bounds on $\prsat(n,T)$ are known is a caterpillar with $\delta_2(T) =2$ and $\prsat(n,T)\ge \frac45 n+O(1)$.
It would be interesting to see how this situation changes when $\delta_2(T)>2$.
In particular, it would be interesting to improve Theorem~\ref{thm:double_star_sat_improved}(2) to asymptotically tight bounds on $\prsat(n,S_{t+1,s+1})$ for $t \ge s \ge 2$.

It would also be interesting to see if there exists a graph parameter which yields a closer approximation to $\prsat(n,T)$ than $\delta_2(T)$ does.

In the case of the classical saturation number, it is known for each $k \ge 4$ which trees $T$ of order $k$ achieve the minimum and maximum possible saturation number.
\begin{theorem}[J. Faudree, R. Faudree, Gould, Jacobson~\cite{FaudreeFaudreeGouldJacobson2009}; K\'aszonyi, Tuza~\cite{Kaszonyi1986}]
Let $T$ be a tree of order $k \ge 4$.
Then for all $n$ large enough, $\sat(n,T_k^*)< \sat(n,T) < \sat(n,K_{1,k-1})$.
\end{theorem}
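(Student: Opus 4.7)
The plan is to prove the two strict inequalities (assuming $T \notin \{T_k^*, K_{1,k-1}\}$) separately, combining the Kászonyi--Tuza asymptotic framework for $\sat(n,T)$ with direct structural arguments.

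For the right-hand inequality $\sat(n,T) < \sat(n,K_{1,k-1})$, I would first establish $\sat(n,K_{1,k-1}) = \tfrac{k-2}{2}n + O(1)$. The lower bound follows by observing that in any $K_{1,k-1}$-saturated graph $G$, the set $L = \{v \in V(G) : d_G(v) < k-2\}$ must be a clique: otherwise one could add a non-edge inside $L$ without increasing any degree to $k-1$, so no new copy of $K_{1,k-1}$ would appear. Hence $|L| \le k-2$, giving $2|E(G)| \ge (n-k+2)(k-2)$. The matching upper bound comes from an almost-$(k-2)$-regular construction in~\cite{Kaszonyi1986}. For the corresponding bound on $\sat(n,T)$, since $T \ne K_{1,k-1}$ it contains two adjacent non-leaf vertices; I would construct a $T$-saturated graph as a disjoint union of bounded-size components, each containing a specified "seed" substructure, designed so that its average degree is a constant strictly less than $k-2$.

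For the left-hand inequality $\sat(n, T_k^*) < \sat(n, T)$, Theorem~\ref{thm:T_k^ast} gives the exact value $\sat(n, T_k^*) = n - \ff{n+k-2}{k}$, achieved when $k \mid n$ by a disjoint union of stars $K_{1,k-1}$. The key structural observation is that adding a non-edge between two such stars produces a tree of diameter at most $3$, whose order-$k$ subtrees are precisely caterpillars with central path of length at most $3$. Consequently, for any tree $T$ of order $k$ with $T \ne T_k^*$ whose structure is incompatible with this family---for example, any $T$ of diameter at least $4$, such as $P_k$ with $k \ge 5$---the optimal $T_k^*$-saturated star forest fails to be $T$-saturated, so any $T$-saturated graph must contain strictly more edges. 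A matching Kászonyi--Tuza-style edge-counting argument applied to the invariant associated with $T$ then upgrades this to an excess of at least a positive constant in $n$.

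The main obstacle will be handling trees $T$ that are themselves diameter-$3$ caterpillars distinct from $T_k^*$, in particular double stars $S_{t+1,s+1}$ with $t,s \ge 2$. For such $T$ the diameter-based argument does not apply, and one must instead show that the precise degree sequence and branching pattern of $T$ forces additional edges in any saturating construction. This requires a careful case analysis classifying the order-$k$ subtrees that can be created by a single edge addition to a sparse host graph, and verifying that no optimal $T_k^*$-saturated construction doubles as a $T$-saturated one.
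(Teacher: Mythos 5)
This theorem is stated in the paper as a citation of prior work (Faudree--Faudree--Gould--Jacobson and K\'aszonyi--Tuza); the paper supplies no proof of it, so there is no in-paper argument to compare your proposal against. Evaluating it on its own terms, the proposal is an outline with at least one concrete error and a genuine gap on the harder inequality.

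The right-hand inequality is in reasonable shape. Your clique argument is correct: if $u,v\in L$ were non-adjacent, $G+uv$ would have maximum degree at most $k-2$ unless $G$ already contained $K_{1,k-1}$, so $L$ is a clique, $|L|\le k-2$, and $2|E(G)|\ge (n-k+2)(k-2)$. However, the corresponding upper bound on $\sat(n,T)$ is left entirely unspecified: ``a disjoint union of bounded-size components, each containing a specified seed substructure'' is a statement of intent, not a construction. To finish you would have to exhibit the component graph explicitly and verify that its average degree is bounded strictly below $k-2$, and this is exactly the content of the K\'aszonyi--Tuza upper bound that you would be re-deriving.

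The left-hand inequality has two problems. First, a factual slip: the reason the extremal $T_k^*$-saturated star forest (roughly a disjoint union of copies of $K_{1,k-1}$) fails to be $T$-saturated is not the behaviour of edges between stars but the behaviour of non-edges \emph{inside} a star. Adding an edge between two leaves of one $K_{1,k-1}$ yields a graph whose only spanning trees are $K_{1,k-1}$ and $T_k^*$, so no other order-$k$ tree is created. By contrast, an edge between leaves of two different stars produces a tree of diameter $5$, not ``at most $3$'' as you write; only a centre--centre edge gives diameter $3$. Second, and more fundamentally, showing that one particular extremal $T_k^*$-saturated graph is not $T$-saturated does not give a lower bound on $\sat(n,T)$: one must rule out \emph{every} $T$-saturated graph with at most $\sat(n,T_k^*)$ edges. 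You gesture at this (``a matching K\'aszonyi--Tuza-style edge-counting argument\ldots upgrades this'') but never supply it, and you then explicitly identify diameter-$3$ double stars as an unhandled obstacle. That case is precisely where the difficulty lies, so the left-hand inequality is not proved.
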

We ask whether the analogous statement holds for the proper rainbow saturation number.
\begin{question}
\label{con:min_max_prsat_tree}
Let $T$ be a tree of order $k \ge 4$.
Is it the case that for all $n$ large enough, $\prsat(n,T_k^*)<\prsat(n,T)<\prsat(n,K_{1,k-1})$?
\end{question} 
By combining Theorem~\ref{thm:gen_path_lower_bound}, Theorem~\ref{thm:broom4}, Theorem~\ref{thm:cat_converse}, and Proposition~\ref{prop:Faudree_tree_gen_lower_prsat}, we are close to proving the lower bound $\prsat(n,T)>\prsat(n,T_k^*)$ in Question~\ref{con:min_max_prsat_tree}: the only remaining case is when $T$ is a non-broom with a vertex of degree $2$ which has a leaf neighbour and longest path $P_5$.

In Section~\ref{sec:stars} we discover a connection between the proper rainbow saturation number of certain trees, and the classical saturation number of related trees. It would be interesting to find further connections between these parameters. 

One particular direction of interest could be to note that by the proof of Theorem~\ref{thm:Tkast rainbow sat}, for all $k \ge 4$ and $n \ge k+2$, $\Prsat(n,T_k^*) \cap \Sat(n,T_{k+1}^*) \neq \emptyset$.
Also, the graph $G(n,t+s+1,s+1)$ from the proof of Theorem~\ref{thm:double_star_sat_improved} is both properly rainbow $S_{t+1,s+1}$-saturated and $S_{t+s+1,s+1}$-saturated (although it does not necessarily achieve $\prsat(n,S_{t+1,s+1})$ or $\sat(n,S_{t+1,s+1})$ edges).
Thus, we pose the following question.
\begin{question} \label{qst:prsat_sat_connection}
For which graphs $H$ does there exist a graph $H'$ such that for all $n$ large enough, $\Prsat(n,H) \cap \Sat(n,H') \neq \emptyset$?
\end{question}
The phenomenon in Question~\ref{qst:prsat_sat_connection} occurs when $H'$ is the ``smallest" graph that contains a rainbow copy of $H$ in any proper colouring, in the sense that for ``most" graphs $G$, $G$ contains $H'$ if and only if it contains a rainbow copy of $H$ in any proper colouring.

In~\cite{other}, we consider the proper rainbow saturation number of other families of graphs. In particular, we prove bounds for complete graphs, cycles, and complete bipartite graphs. We suggest several other directions for further research there.

\textbf{Note added before submission:} While preparing this article we became aware that Baker, Gomez-Leos, Halfpap, Heath, Martin, Miller, Parker, Pungello, Schwieder, and Veldt~\cite{emily} had simultaneously and independently proved the same bounds on $\prsat(n,P_k)$ as in Corollary~\ref{cor:path} when $k \ge 6$, and they also proved that $\prsat(n,P_5) = n+O(1)$, although the result that $\prsat(n,P_5)=n-1$ for infinitely many $n$ is unique to our paper.
Their lower bound can be adapted to a bound on $\prsat(n,T)$ for all trees $T$ with diameter at least $4$.
While their construction for the upper bound on $\prsat(n,P_k)$ for $k\ge 6$ is essentially identical to ours, their proof differs significantly from ours and offers a very detailed characterization of the colourings involved.

\bibliographystyle{amsplain}
\bibliography{RainbowSat}

\end{document}